\documentclass{amsart}
\usepackage[utf8]{inputenc}
\usepackage{graphicx,amssymb,latexsym}
\input xy
\xyoption{all}
\usepackage[all]{xy}
\graphicspath{ {./images/} }

\usepackage{color}
\usepackage{pb-diagram}
\usepackage{epstopdf}
\usepackage{multicol}

\setlength{\oddsidemargin}{-48pt}
 \setlength{\evensidemargin}{-48pt}
 \setlength{\textwidth}{16cm}

\hoffset=1.5cm

% ----------------------------------------------------------------
\vfuzz2pt % Don't report over-full v-boxes if over-edge is small
\hfuzz2pt % Don't report over-full h-boxes if over-edge is small
% THEOREMS -------------------------------------------------------
\newtheorem{thm}{Theorem}[section]

\newtheorem{cor}[thm]{Corollary}
\newtheorem{lem}[thm]{Lemma}
\newtheorem{prop}[thm]{Proposition}
\theoremstyle{definition}
\newtheorem{defn}[thm]{Definition}
\newtheorem{rem}[thm]{Remark}

\numberwithin{equation}{section}
% MATH -----------------------------------------------------------

\newcommand{\QQ}{\mathbb Q}

\newcommand{\ZZ}{\mathbb Z}
\newcommand{\CC}{\mathbb C}
\newcommand{\PP}{\mathbb P}

\newcommand{\lra}{\longrightarrow}

\newcommand{\ra}{\rightarrow}

\newcommand{\cA}{\mathcal{A}}
\newcommand{\cL}{\mathcal{L}}

\newcommand{\cU}{\mathcal{U}}

\newcommand{\tC}{\widetilde{C}}

\newcommand{\cE}{\mathcal{E}}

\newcommand{\cO}{\mathcal{O}}
\newcommand{\cD}{\mathcal{D}}

\newcommand{\cR}{\mathcal{R}}

\newcommand{\cZ}{\mathcal{Z}}

\newcommand{\cB}{\mathcal{B}}
\newcommand{\ch}{\mathfrak{h}}

\DeclareMathOperator{\Aut}{{Aut}}

 \DeclareMathOperator{\Ker}{ker}
  \DeclareMathOperator{\Fix}{Fix}
\DeclareMathOperator{\Pic}{Pic}
 \DeclareMathOperator{\Nm}{{Nm}}

 \DeclareMathOperator{\Ima}{{Im}}
 \DeclareMathOperator{\diag}{{diag}}

 \DeclareMathOperator{\fix}{Fix}
 \DeclareMathOperator{\im}{Im}

\DeclareMathOperator{\id}{id}

\newcommand{\s}{\sigma}
\renewcommand{\t}{\tau}
% ----------------------------------------------------------------
\begin{document}

\title[ ]{Klein coverings of genus 2 curves}
\author{ Pawe\l{} Bor\'owka,  Angela Ortega}
\address{P. Bor\'owka \\ Institute of Mathematics, Jagiellonian University in Krak\'ow, Poland}
\email{Pawel.Borowka@uj.edu.pl}
              
\address{A. Ortega \\ Institut f\" ur Mathematik, Humboldt Universit\"at zu Berlin \\ Germany}
\email{ortega@math.hu-berlin.de}

%\thanks{}
\subjclass{14H40, 14H30, 14H37}
%\keywords{}%

\date{\today }
%\dedicatory{ }%
%\commby{ }%
% ----------------------------------------------------------------

\begin{abstract} 
We investigate the geometry of \'etale $4:1$ coverings of smooth complex genus 2 curves with the monodromy group
isomorphic to the Klein four-group. There are two cases, isotropic and non-isotropic depending on the values
of the Weil pairing restricted to the group defining
the covering. We recall from our previous work \cite{bo} the results concerning the non-isotropic case and
fully describe the isotropic case. We show that the necessary information to construct the Klein coverings is
encoded in the 6 points on $\PP^1$ defining the genus 2 curve.
The main result of the paper is the fact that, in both cases the Prym map associated to these coverings is
injective. Additionally, we provide a concrete description of the closure of the image of the Prym map inside the 
corresponding moduli space of polarised abelian varieties.
\end{abstract}

\maketitle
%\tableofcontents

\section{Introduction}

Classically, Prym varieties are principally polarized abelian varieties arising from \'etale double coverings
$f:C'\ra C$ between smooth curves. The Prym variety of such a covering is the complementary abelian subvariety
to the image of $f^*(JC)$ inside the Jacobian $JC'$.  By extending this to the moduli space of coverings one can
define the Prym map to the moduli space $\cA_g$ of principally polarised abelian varieties (ppav). 
It is known that the Prym map is dominant for $g\leq 5$ (\cite{W}), so the general ppav of dimension $\leq 5$
is a Prym variety.  Mumford revived the theory of Prym varieties by investigating in \cite{M} the relations
between the Jacobian of $C'$ and the Prym variety $P(C'/C)$, which are known as the  Schottky-Jung relations. 
For $g\geq 7$ the Prym map is generically injective but never injective (\cite{B}, \cite{De}, \cite{FS}, \cite{We}). 
The study of the the Prym map for \'etale double coverings of genus 6 curves led to the result that $\cA_5$ is
unirational (\cite{V}). These results inspired the research of Pryms for other coverings, which are no longer
principally polarised. The Torelli problem in the case of Pryms of ramified double coverings  has been fully
solved in a  series of recent papers \cite{NO,MN,MP} and the case of \'etale cyclic coverings has
been covered in \cite{lo}.

The main aim of this paper is to go a step further and study the most basic non-cyclic coverings, namely \'etale
$4:1$ coverings of genus 2 curves with  monodromy group isomorphic
to the Klein four-group. These are examples of bidouble coverings but in the case curves we call
them Klein coverings. Since the group is non-cyclic, it turns out that the space of such coverings have two
disjoint components depending on the values of the Weil pairing restricted to the
group defining the covering, called isotropic and non-isotropic. The non-isotropic case has appeared naturally in
the investigation of smooth hyperelliptic curves that can be embedded into abelian surfaces (\cite{bo}). One of
the results of \cite{bo} is that such curves are non-isotropic Klein coverings of genus 2 curves.
In this paper we show that the corresponding Prym map for non-isotropic Klein coverings over genus 2 curves is
injective (see Theorem \ref{thm:nonisoprym}). 

The isotropic case is quite different from the non-isotropic case since the covering is not longer 
hyperelliptic. The good side of the
story is that we get more non-trivial quotient curves and in particular we have been able to show that the Prym
variety of the covering
is the image of the Jacobian of one of them. This allows us to show that in this case, the Prym map is also
injective (see Theorem \ref{thm:isoprym}).

Putting the two theorems together we get the main result of the paper as follows.
\begin{thm}
Let $H$ be a smooth complex genus 2 curve and $V_4$ be a Klein four-subgroup of the group of two torsion 
points on the Jacobian $JH$. 
The space of Klein coverings is the disjoint union $R_2^{iso}\cup R_2^{ni}$ depending on whether $V_4$ is
isotropic or non-isotropic
with respect to the Weil pairing. Moreover, the Prym maps 
\[ {\Pr}^{iso}:\cR_2^{iso}\lra\cA_3^{(1,2,2)}\]
\[ {\Pr}^{ni}:\cR_2^{ni}\lra \cA_3^{(1,1,4)}\]
are both injective onto their images.
\end{thm}

The idea of the proof is that the Klein covering provides a tower of curves in which one can find elliptic curves that generate
the Prym variety. Then we show that all the information is actually contained in the division of 6 points on $\PP^1$ into 3 pairs in the
isotropic case and 2 triples in the non-isotropic case. This in particular shows that $R_2^{iso}$ and $R_2^{ni}$ are irreducible.

The paper is structured as follows. In the Preliminaries we collect the necessary results on curves with
involutions. An interesting result on its own is the characterisation of non-hyperelliptic curves of genus 3 
admitting 2 commuting involutions
(see Lemma \ref{lem:nonhyp3}) and their Jacobians, (see Proposition \ref{prop:nonhyp3}). Such curves will play crucial role in the isotropic 
Klein coverings.

Section \ref{sec:niso} is devoted to non-isotropic Klein coverings. It consists of a summary of results from \cite{bo} and Theorem \ref{thm:nonisoprym} that states that the Prym map in this case is injective.
Section \ref{sec:iso} explains the construction of isotropic Klein coverings. Here, we characterise the curves
that appear as quotient curves and the covering maps between them. The conclusion is Theorem \ref{thm:isoprym}
that states that the Prym map is injective also in this case. In Section 5 we give a precise
description, in terms of period matrices, of the image of the Prym map in both cases.

\subsection*{Acknowledgements}
We would thank Jennifer Paulhus for showing us the LMF database
which contains examples of families of curves with prescribed 
automorphism groups. We are grateful to Klaus Hulek for suggesting related questions to investigate,
Section 5 is the outcome of one of these.

The work of the first author has been partially supported by the Polish National Science Center project number 2018/02/X/ST1/02301. 
The work of the second author has been partially supported by the National Science Foundation under
Grant N0. DMS-1440140 while the author was in residence at the Mathematical Sciences Research Institute in Berkeley,
California, during the Spring 2019 semester.

\section{Preliminaries}
In the paper we deal with elliptic curves, i.e. genus 1 curve with a chosen point. In this way, an elliptic curve is a one
dimensional abelian variety. Thus, the involution  $-1$ becomes a (hyper)elliptic involution and we define the
Weierstrass points on an elliptic curve as the 2-torsion points, i.e. the fixed points of the involution. 

 A double covering $f:C'\ra C$ can be defined from upstairs
 as the quotient of $C'$ by an involution $\s$ so that  $C=C'/\s$, or from downstairs by fixing a branching 
 divisor $B$ on $C$ and a line bundle $\cL$ such that $\cL^{\otimes2}=\cO_C (B)$. In the paper we use both 
 perspectives. If  $\s$ is the involution that exchanges the sheets of the covering, then we denote 
 by $C_\s$ the quotient curve. Sometimes, to stress that the quotient curve is elliptic, we write 
 $E_\s=C/\s$ instead of $C_\s$.

As in \cite{bo}, we define Klein coverings as follows:
\begin{defn}
A Klein covering of a curve $C$ is a 4:1 \'etale covering $f:\tC\ra C$ with  monodromy group isomorphic to 
the Klein four-group $V_4 = \ZZ_2\times\ZZ_2$. In particular, it is defined by a Klein four-subgroup $G$
of the set of $2$-torsion points on the Jacobian $JC$, denoted by $JC[2]$.   
\end{defn}
In \cite{bo} we have shown that there are two disjoint cases of Klein coverings that depends on the values of the Weil pairing restricted to $G$. 
\begin{defn}
We call a Klein covering isotropic if the group $G \subset JC[2]$ defining the covering is isotropic with respect to the Weil pairing. In the other case, we call the covering non-isotropic.
\end{defn}
\begin{rem}\label{rem:Weil}
Let $H$ be a hyperelliptic curve of genus $g$. Recall from \cite{bo,D} that every $2$-torsion point on $JH$ can be written uniquely as a
sum of an even number of at most $g$ Weierstrass points, so the Weil paring of $\eta_1,\eta_2\in JH[2]$ can be computed as the parity of 
the number of common Weierstrass points in the presentations of $\eta_1,\eta_2$. 
\end{rem}

Through the paper the involution $\iota$ will always denote the hyperelliptic involution.
We recall the basic facts about involutions and their lifts.

\begin{lem}\label{lem:lift}
Let $f:C\ra C_{\t}$ be the double covering given by the 
fixed point free involution $\t$. Let $\rho$ be an involution on $C_\t$
and denote $\fix(\rho)$ the set of fixed points. Let $r,r\t$  be the lifts of $\rho$. Then $\fix(r)$ and
$\fix(r\t)$ are disjoint, their cardinalities are divisible by $4$ and $f^{-1}(\fix(\rho))=\fix(r)\cup\fix(r\t)$ 
is twice the cardinality of $\fix(\rho)$.
\end{lem}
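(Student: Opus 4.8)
The plan is to exploit that the two lifts of $\rho$ commute with $\t$, and then read off the first and third assertions from a fibrewise analysis of $f$, reserving a Riemann--Hurwitz computation for the divisibility claim. First I would pin down the relation $r\t=\t r$. Since $f\t=f$, both $\t r$ and $r\t$ satisfy $f\circ(\cdot)=\rho\circ f$, so each is a lift of $\rho$; as the lifts form a single coset of the deck group $\set{\id,\t}$ there are exactly two of them, namely $r$ and $r\t$, and since $\t r\neq r$ (because $\t$ is fixed point free) we must have $\t r=r\t$. Thus $r$ and $\t$ generate a Klein four-group acting on $C$, and in particular $\fix(r)$ and $\fix(r\t)$ are each $\t$-stable.

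Next I would establish disjointness and the preimage identity. If $x\in\fix(r)\cap\fix(r\t)$ then $r(x)=x$ and $r(\t x)=x$; applying $r$ to the second equality gives $\t x=x$, contradicting that $\t$ is fixed point free, so $\fix(r)\cap\fix(r\t)=\emptyset$. For the preimage, fix $p\in\fix(\rho)$ with fibre $f^{-1}(p)=\set{x,\t x}$, which has two points since $f$ is \'etale. Because $f(r(x))=\rho(f(x))=\rho(p)=p$, the point $r(x)$ lies in $\set{x,\t x}$: in the first case $x\in\fix(r)$, and then $r(\t x)=\t r(x)=\t x$ shows $\t x\in\fix(r)$ as well; in the second case $r(x)=\t x$ forces $x,\t x\in\fix(r\t)$ by the same commutation. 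Conversely any fixed point of $r$ or of $r\t$ maps into $\fix(\rho)$ under $f$. Hence $f^{-1}(\fix(\rho))=\fix(r)\cup\fix(r\t)$, and since this union is disjoint and $f$ is $2:1$, we get $\abs{\fix(r)}+\abs{\fix(r\t)}=2\abs{\fix(\rho)}$.

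The step I expect to require the most care is the divisibility by $4$, since a bare involution on a curve need only have an even number of fixed points. The extra input is that $\t$ is fixed point free: Riemann--Hurwitz for the \'etale double cover $f:C\ra C_\t$ gives $g_C=2g_{C_\t}-1$, so $g_C$ is \emph{odd}. Applying Riemann--Hurwitz to the double cover $C\ra C/r$, branched exactly over $\fix(r)$, then yields $\abs{\fix(r)}=2g_C+2-4g_{C/r}=4\bigl(g_{C_\t}-g_{C/r}\bigr)$, which is divisible by $4$, and the identical computation for the involution $r\t$ gives the analogous statement for $\fix(r\t)$. Combining the three steps completes the proof. I would present the genus-parity observation explicitly, as it is the only place where the hypothesis that $\t$ acts freely is used in an essential rather than purely formal way.
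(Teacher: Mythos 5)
Your proof is correct and takes essentially the same route as the paper's: a fibrewise analysis of the two lifts over $\fix(\rho)$ gives the disjointness and the preimage identity, while Riemann--Hurwitz (étaleness of $f$ forcing $g(C)$ to be odd, then applied to the quotient of $C$ by each lift) gives divisibility by $4$. The only differences are cosmetic: you make the commutation $\tau r=r\tau$ explicit where the paper argues directly on fibres, and your genus count $g(C)=2g(C_\tau)-1$ corrects the paper's $2g+1$ (only oddness is used in either case).
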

\begin{proof}
Let $C_\t$ be of genus $g$. Then $C$ is of genus $2g+1$. Using Hurwitz formula, for any involution, say $\alpha$ on $C$ we have
$$2(2g+1)-2=2(2g_\alpha-2)+|\fix(\alpha)|.$$
hence $\fix(\alpha)$  has cardinality divisible by 4.

If $x\in\fix(\rho)$ and $f^{-1}(x)=\{P,Q\}$ then either $r(P)=P$ and $r(Q)=Q$ or $r(P)=Q$ and $r(Q)=P$. In the former case, $r\t$ exchange 
points in the fibre and in the latter case $r\t$ fixes them. Hence every preimage of a fixed point of $\rho$ belongs to exactly one fixed point set.  
\end{proof}
As an easy, yet important consequence of the proof, we get the following corollary.
\begin{cor}\label{cor:liftinv}
Let $f:C\ra C_{\t}$ be a double covering. Then any lift of an involution with at least one fixed point that is
not a branch point of $f$ is again an involution.
\end{cor}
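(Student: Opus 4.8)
The plan is to exploit the dichotomy already visible in the proof of Lemma \ref{lem:lift}: a lift of $\rho$ squares to a deck transformation of $f$, so it is either an involution or has order $4$, and a fixed point off the ramification locus rules out the latter. First I would recall that if $\tilde\rho$ denotes any lift of $\rho$, then by definition $f\circ\tilde\rho=\rho\circ f$, whence
\[ f\circ\tilde\rho^2=\rho\circ f\circ\tilde\rho=\rho^2\circ f=f,\]
so that $\tilde\rho^2$ is a deck transformation of the double covering $f$. Since the deck group of $f$ equals $\{\id,\t\}$, we conclude $\tilde\rho^2\in\{\id,\t\}$; equivalently, $\tilde\rho$ is an involution precisely when $\tilde\rho^2=\id$, and otherwise $\tilde\rho^2=\t$ and $\tilde\rho$ has order $4$.

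Next I would translate the hypothesis on the fixed point into a statement about $\t$. A point $P\in C$ that is not a branch point of $f$ is one at which $f$ is unramified, i.e. $\t(P)\neq P$. Suppose $\tilde\rho(P)=P$ for such a $P$. If $\tilde\rho$ were not an involution, then $\tilde\rho^2=\t$ by the previous step, and applying this to $P$ would give
\[ \t(P)=\tilde\rho^2(P)=\tilde\rho\bigl(\tilde\rho(P)\bigr)=\tilde\rho(P)=P,\]
contradicting $\t(P)\neq P$. Therefore $\tilde\rho^2=\id$ and $\tilde\rho$ is an involution, as claimed.

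There is essentially no hard step here; the only point requiring care is the reading of ``not a branch point'', which must be interpreted as the condition $\t(P)\neq P$, so that $\t$ genuinely moves $P$. This is precisely the mechanism underlying the proof of Lemma \ref{lem:lift}: in the \'etale situation $\t$ is fixed point free, so $\t(P)\neq P$ holds for \emph{every} $P$ and hence any lift of $\rho$ possessing a fixed point is automatically an involution. The corollary merely records the same conclusion for an arbitrary double covering, where one must only avoid the ramification locus.
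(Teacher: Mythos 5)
Your central mechanism (a lift squares to a deck transformation, and a fixed point away from the ramification locus forces that square to be the identity) is correct, but you have proved a different statement from the one the corollary asserts, and the difference is exactly where the content lies. You read the hypothesis ``with at least one fixed point that is not a branch point of $f$'' as a condition on the lift $\tilde\rho$. It is a condition on the \emph{downstairs} involution $\rho$: the fixed points of $\rho$ lie in $C_\t$, which is where the branch points of $f$ live (a fixed point of a lift lies in $C$ and could only be compared with ramification points), and the remark immediately after the corollary confirms this reading --- ``all the \emph{involutions} in the paper whose lift is considered satisfy the condition''. Under that reading the conclusion is that \emph{every} lift of $\rho$ is an involution, including a lift with no fixed points at all, whereas your argument only treats a lift that itself fixes a point. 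This is not a pedantic distinction: the paper applies the corollary precisely to fixed-point-free lifts, e.g.\ in Proposition \ref{prop:involutionj} the lift $j$ of the hyperelliptic involution is fixed point free, yet it must be known to be an involution. Nor can the fixed-point-free case be waved away in general: if a curve $X$ carries a free $\ZZ_4$-action with generator $g$, then $g$ is an order-$4$, fixed-point-free lift of the involution induced by $g$ on $X/g^2$ along the \'etale double cover $X\ra X/g^2$; so some argument tied to the hypothesis on $\rho$ is genuinely needed for the lift without fixed points.

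The gap closes with the paper's own (implicit) argument, which is the last paragraph of the proof of Lemma \ref{lem:lift}. Let $x$ be a fixed point of $\rho$ that is not a branch point, so $f^{-1}(x)=\{P,Q\}$ with $P\neq Q$, and $\t$ swaps $P$ and $Q$. \emph{Any} lift $r$ of $\rho$ maps this fibre to itself, i.e.\ restricts to a permutation of the two-element set $\{P,Q\}$; hence $r^2$ fixes both $P$ and $Q$. Combined with your deck-transformation step, $r^2\in\{\id,\t\}$, and since $\t$ fixes neither $P$ nor $Q$, we get $r^2=\id$ for every lift $r$ simultaneously. Alternatively, you can keep your argument for the lift $r$ that fixes a point of the fibre and then patch the other lift $r\t$ by noting that $r\t r^{-1}$ is a deck transformation distinct from $\id$, so $r\t=\t r$ and therefore $(r\t)^2=r^2\t^2=\id$. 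Either one-line addition makes your proof cover the statement as used in the paper.
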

All the involutions in the paper whose lift is considered satisfy the condition of Corollary \ref{cor:liftinv},
so in the sequel the lift of an involution under a double covering is again an involution. 

\begin{lem}\label{lem:weier}
Let $W=\{w_1,\ldots,w_{2g+2} \}$ be the set of Weierstrass points on a hyperelliptic curve $H$ of genus $g$. Then for any subset A of 
size $g+1$ we have the following equality in $\Pic^{g+1}(H)$
$$\sum_{w_i\in A}w_i=\sum_{w_j\in W\setminus A}w_j$$
\end{lem}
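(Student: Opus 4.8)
The plan is to reduce everything to the geometry of the hyperelliptic double cover $\pi\colon H\ra\PP^1$ and the fact that each Weierstrass point is a ramification point of $\pi$. Write $\mathfrak g\in\Pic^2(H)$ for the class of the hyperelliptic pencil $g^1_2=\pi^*\cO_{\PP^1}(1)$. Since $w_i$ is the unique preimage of a branch point $b_i\in\PP^1$ and $\pi$ ramifies there, one has $\pi^*b_i=2w_i$; as all fibres of $\pi$ are linearly equivalent, this gives the fundamental relation $2w_i\sim\mathfrak g$ for every $i$. Set $D_A=\sum_{w_i\in A}w_i$ and $D_{A^c}=\sum_{w_j\in W\setminus A}w_j$, so that $D_A+D_{A^c}=\sum_{k=1}^{2g+2}w_k$ and each summand has degree $g+1$.

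First I would sum the fundamental relation over the $g+1$ points of $A$, obtaining $2D_A\sim(g+1)\mathfrak g$. Next I need the total ramification divisor to satisfy $\sum_k w_k\sim(g+1)\mathfrak g$; this is exactly the content of the Riemann--Hurwitz formula $K_H\sim\pi^*K_{\PP^1}+\sum_k w_k$, into which one substitutes $K_H\sim(g-1)\mathfrak g$ and $\pi^*K_{\PP^1}\sim-2\mathfrak g$. Granting these two identities, the lemma follows from the one-line computation
\[
D_A-D_{A^c}=2D_A-\textstyle\sum_k w_k\sim(g+1)\mathfrak g-(g+1)\mathfrak g=0,
\]
which is the asserted equality in $\Pic^{g+1}(H)$.

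The one genuinely delicate point is that $2w_i\sim\mathfrak g$ alone only pins down $\sum_k w_k$ up to a $2$-torsion class, since summing the relations merely gives $2\sum_k w_k\sim(2g+2)\mathfrak g$; the Riemann--Hurwitz input is what removes this ambiguity. To keep the argument self-contained I would instead exhibit an explicit rational function realising the equivalence. Choosing an affine model $y^2=\prod_k(x-a_k)$ with all branch points finite, the function $h=y\big/\prod_{w_j\in A^c}(x-a_j)$ has divisor $\Div(h)=\sum_k w_k-\sum_{w_j\in A^c}2w_j=D_A-D_{A^c}$, using that $y$ has a simple zero at each $w_k$ and a pole of order $g+1$ at the two points over $\infty$, while the denominator (a polynomial of degree $|A^c|=g+1$) has a double zero at each $w_j\in A^c$ and matching poles of order $g+1$ at infinity. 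The only care needed is the bookkeeping at the points above $\infty$, together with the minor variant when $\infty$ is itself a branch point; once the poles cancel, the claimed principal divisor drops out and the equality is proved.
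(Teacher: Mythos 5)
Your proof is correct and is essentially the paper's argument: both hinge on the two facts that $2w_i\sim\mathfrak{g}$ for every $i$ (the paper phrases this as the Weierstrass points being $2$-torsion in $JH$) and that $\sum_k w_k\sim(g+1)\mathfrak{g}$, which the paper reads off from the defining equation of $H$ and you obtain from Riemann--Hurwitz, after which the concluding rearrangement is the same. Your explicit rational function $h=y/\prod_{w_j\in A^c}(x-a_j)$ is simply a self-contained implementation of the paper's remark that ``all Weierstrass points become roots of a polynomial'', so it is a rigorous variant of, rather than a departure from, the same approach.
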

\begin{proof}
When one considers an equation of a hyperelliptic curve, then all Weierstrass points become roots of a polynomial. This means in
particular that the sum of all Weierstrass points is linear equivalent to $(2g+2)P$ where 
$P$ is the preimage of the infinity. 
Hence 
$$\sum_{w_i\in W}w_i \sim (2g+2)P \ \Rightarrow  \ \sum_{w_i\in A}w_i-(g+1)P \sim (g+1)P-\sum_{w_i\in W\setminus A}w_i$$
Since Weierstrass points are 2-torsion points in the Jacobian of $H$, one gets the result.
\end{proof}

\begin{cor} \label{cor:weier}
For an elliptic curve E one has $w_1+w_2=w_3+w_4$ for $w_i\in E[2]$.

For a genus 2 curve $H$ one has $w_1-w_2+w_3-w_4=w_5-w_6$.
\end{cor}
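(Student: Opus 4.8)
The plan is to deduce both identities directly from Lemma \ref{lem:weier} by choosing a convenient subset $A$ of half the Weierstrass points, and then exploiting that the Weierstrass points are $2$-torsion in the Jacobian in order to rearrange signs. In each case the degrees force us into a specific component $\Pic^{d}(H)$, so the first thing I would fix is where each linear equivalence lives.

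For the elliptic curve, I would take $g=1$, so that $W=\{w_1,w_2,w_3,w_4\}$ and the subset $A=\{w_1,w_2\}$ has the required size $g+1=2$. Lemma \ref{lem:weier} then yields $w_1+w_2=w_3+w_4$ in $\Pic^2(E)$ with no further work, which is exactly the asserted equality. For the genus $2$ curve, I would set $g=2$, so $W=\{w_1,\dots,w_6\}$, and choose $A=\{w_1,w_3,w_5\}$ of size $g+1=3$. Lemma \ref{lem:weier} gives
$$w_1+w_3+w_5\sim w_2+w_4+w_6 \quad\text{in }\Pic^3(H).$$
Subtracting the right-hand side lands us in $\Pic^0(H)=JH$ and produces $w_1-w_2+w_3-w_4+w_5-w_6\sim 0$, hence $w_1-w_2+w_3-w_4\sim w_6-w_5$.

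The final step is to replace $w_6-w_5$ by $w_5-w_6$: since every $w_i$ is a $2$-torsion point in $JH$, the class $w_5-w_6$ equals its own negative, so $w_6-w_5=-(w_5-w_6)=w_5-w_6$. This gives the claimed identity $w_1-w_2+w_3-w_4=w_5-w_6$ in $JH$. I do not expect a genuine obstacle here; the only point requiring care is bookkeeping the component $\Pic^{d}(H)$ in which each equivalence holds and using the $2$-torsion property to flip the sign of the last difference. Beyond that, the statement is an immediate specialization of Lemma \ref{lem:weier}.
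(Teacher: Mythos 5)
Your proof is correct and is exactly the intended derivation: the paper states this corollary without a separate proof precisely because it follows from Lemma \ref{lem:weier} by the specializations you make ($g=1$ with $A=\{w_1,w_2\}$, and $g=2$ with $A=\{w_1,w_3,w_5\}$), together with the observation that differences of Weierstrass points are $2$-torsion, which allows the sign flip $w_6-w_5=w_5-w_6$. Nothing is missing.
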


Recall the following conditions of a double covering to be hyperelliptic:
\begin{prop}[\cite{bo}] \label{farkas-lemma}
Let $H$ be a hyperelliptic curve of genus $g$ and $h:C\ra H$ an \'etale double covering defined by 
$\eta\in JH[2]\setminus \{0\}$.  Then $C$ is hyperelliptic if and only if 
$\eta= \cO_H(w_1-w_2)$, where $w_1, w_2 \in H $ are Weierstrass points.
\end{prop}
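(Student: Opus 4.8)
The plan is to detect the hyperellipticity of $\tC$ through the lifts of the hyperelliptic involution $\iota$, after recasting the hypothesis on $\eta$ as a condition on the Weierstrass points presenting it. By Remark~\ref{rem:Weil}, $\eta\in JH[2]\setminus\{0\}$ is presented by an even subset $T\subseteq W$ of the Weierstrass set, uniquely up to replacing $T$ by its complement $W\setminus T$; writing $|T|=2m$, one has $\eta=\cO_H\big(\sum_{w_i\in T}w_i-m\,\delta\big)$ for the hyperelliptic class $\delta$, so that $2w_i\sim\delta$ for every Weierstrass point. Since $\cO_H(w_1+w_2-\delta)=\cO_H(w_1-w_2)$, the hypothesis $\eta=\cO_H(w_1-w_2)$ is exactly the case $m=1$, i.e. $|T|=2$ (equivalently $|W\setminus T|=2$). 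Thus the proposition reduces to showing that $\tC$ is hyperelliptic if and only if $\eta$ admits a presentation with $|T|=2$.

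As $\iota$ acts by $-1$ on $JH$ it fixes every $2$-torsion point, so $\iota^\ast\eta\cong\eta$ and $\iota$ lifts to $\tC$; by Corollary~\ref{cor:liftinv} both lifts are involutions, $\tilde\iota$ and $\tilde\iota\s$, where $\s$ is the fixed-point-free deck involution of $h$. The group $\langle\tilde\iota,\s\rangle\cong\ZZ_2\times\ZZ_2$ and, because $H/\iota=\PP^1$, it exhibits $\tC$ as a $V_4$-cover of $\PP^1$. Concretely, writing $H\colon y^2=f(x)$ with $f=f_Tf_{T'}$, where $f_T=\prod_{w_i\in T}(x-a_i)$ and $f_{T'}$ is the complementary factor, the cover defined by $\eta$ is the normalisation of $\{t^2=f_T(x),\ s^2=f_{T'}(x)\}$ with $y=ts$, its deck involution being $(t,s)\mapsto(-t,-s)$. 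The two lifts of $\iota$ are then $(t,s)\mapsto(t,-s)$ and $(t,s)\mapsto(-t,s)$, with respective quotients the hyperelliptic curves $C_t\colon t^2=f_T(x)$ and $C_s\colon s^2=f_{T'}(x)$, of genera $m-1$ and $g-m$. This is consistent with Lemma~\ref{lem:lift}: the $4g+4$ points of $h^{-1}(\fix(\iota))$ split into $4m$ fixed points of one lift and $4(g-m+1)$ of the other.

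For the \emph{if} direction, $m=1$ makes $C_t$ a conic, so $\tC\to C_t\cong\PP^1$ is a double cover and $\tC$ is hyperelliptic. For the \emph{only if} direction, assume $\tC$ is hyperelliptic with hyperelliptic involution $\t_0$. Being unique, $\t_0$ is central in $\Aut(\tC)$, hence commutes with $\s$ and descends to an involution $\bar\t_0$ on $H=\tC/\s$. It cannot be trivial, for otherwise $\t_0\in\{\id,\s\}$, impossible since $\t_0$ has fixed points while $\s$ has none. The double cover $\tC/\t_0=\PP^1\to\tC/\langle\t_0,\s\rangle$ forces $\tC/\langle\t_0,\s\rangle\cong\PP^1$; but this quotient also equals $H/\bar\t_0$, so $\bar\t_0$ is an involution of $H$ with rational quotient, i.e. the hyperelliptic involution $\iota$. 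Therefore $\t_0$ is a lift of $\iota$, so $\t_0\in\{\tilde\iota,\tilde\iota\s\}$ and one of $C_t,C_s$ has genus $0$, forcing $m=1$ or $m=g$, that is $|T|=2$ or $|W\setminus T|=2$.

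The one genuinely delicate point, and the main obstacle, is the descent step in the last paragraph: showing that the hyperelliptic involution of $\tC$ must project to the hyperelliptic involution of $H$ rather than to some other automorphism or the identity. Centrality of $\t_0$ supplies the commutation with $\s$ needed to descend it, and the genus bookkeeping along the $V_4$-tower over $\PP^1$ then pins $\bar\t_0$ down as $\iota$; once this is secured, the equivalence with $|T|=2$ is immediate from the genus computation of $C_t$ and $C_s$.
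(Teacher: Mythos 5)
This proposition is imported into the paper from \cite{bo} with no proof given here, so there is no in-paper argument to compare against; your proof therefore stands or falls on its own, and it is correct. It is essentially the classical argument (going back to Mumford's \emph{Prym varieties I} and Farkas's work on unramified double covers of hyperelliptic curves): split $f=f_Tf_{T'}$ according to the even partition of Weierstrass points presenting $\eta$, realise $C$ as the normalisation of $\{t^2=f_T,\ s^2=f_{T'}\}$, observe that the two lifts of $\iota$ have quotients the hyperelliptic curves $C_t$, $C_s$ of genera $m-1$ and $g-m$, and then pin down the hyperelliptic involution of $C$ (when it exists) as one of these two lifts via its centrality in $\Aut(C)$. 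All the key steps check out: the genus bookkeeping is consistent with Lemma \ref{lem:lift}, the descent of $\tau_0$ to $H$ is legitimate because the unique hyperelliptic involution on a curve of genus $2g-1\geq 3$ is central, and the identification of $H/\bar\tau_0$ with $\tC/\langle\tau_0,\s\rangle\cong\PP^1$ forces $\bar\tau_0=\iota$. Two small points deserve an explicit line. First, the identification of the normalisation of $\{t^2=f_T,\ s^2=f_{T'}\}$ with \emph{the} cover defined by $\eta$ rests on $\operatorname{div}(f_T)=2\bigl(\sum_{w_i\in T}w_i-m\,\delta_\infty\bigr)$, so that adjoining $\sqrt{f_T}$ is étale and corresponds exactly to the class $\eta$; this is where the hypothesis $\eta\neq 0$ also guarantees connectedness of $C$. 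Second, your claim that $\bar\tau_0$ trivial would force $\tau_0\in\{\id,\s\}$ uses that the closed sets $\{p:\tau_0(p)=p\}$ and $\{p:\tau_0(p)=\s(p)\}$ cover the irreducible curve $C$, so one of them is all of $C$; it is worth saying this, since pointwise membership of $\tau_0(p)$ in $\{p,\s(p)\}$ does not a priori give a global identity. With these remarks added, the proof is complete and self-contained.
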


\begin{lem}\label{lem:doucovell4p}
A double cover $\pi: C \ra E$ of an elliptic curve branched along 4 points of the  
form $x, \iota_E x, y, \iota_E y $ is hyperelliptic of genus 3 if and only if  the line
bundle defining the covering is the hyperelliptic bundle.
\end{lem}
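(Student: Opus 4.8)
The plan is to pin down the hyperelliptic involution $\iota_E$ of $E$, observe that it lifts to $C$, and use the resulting Klein four-group to convert the hyperellipticity of $C$ into a genus computation for three quotient curves that is governed by the class of $\cL$. First I would note that the genus is forced: Riemann--Hurwitz for a double cover of an elliptic curve branched at $4$ points gives $2g_C-2=2\cdot 0+4$, so $g_C=3$, and the real content is the hyperelliptic dichotomy. Write $O$ for a Weierstrass point of $E$, so that the hyperelliptic ($g^1_2$) bundle is $\cO_E(2O)=\cO_E(p+\iota_E p)$ for any $p$. Then $\cO_E(x+\iota_E x)=\cO_E(y+\iota_E y)=\cO_E(2O)$, hence $\cO_E(B)\cong\cO_E(4O)$ for the branch divisor $B=x+\iota_E x+y+\iota_E y$. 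Since $\cL^{\otimes2}=\cO_E(B)$ we may write $\cL=\cO_E(2O)\otimes\tau$ with $\tau\in\Pic^0(E)[2]$, and the assertion ``$\cL$ is the hyperelliptic bundle'' is precisely $\tau=0$; the three nonzero possibilities are $\tau=\cO_E(e_k-O)$ for the finite Weierstrass points $e_1,e_2,e_3$.

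Because $\iota_E(O)=O$ and $\tau^{-1}=\tau$, we have $\iota_E^*\cL\cong\cL$, while $B$ is visibly $\iota_E$-invariant; by Corollary \ref{cor:liftinv} the involution $\iota_E$ lifts to an involution $\widetilde{\iota}$ of $C$. Together with the covering involution $\sigma$ this produces $V_4=\langle\sigma,\widetilde{\iota}\rangle\subset\Aut(C)$ and three intermediate double covers of $\PP^1=E/\iota_E$, namely $E=C/\sigma$, $D_1=C/\widetilde{\iota}$ and $D_2=C/\sigma\widetilde{\iota}$. Since $C$ is hyperelliptic exactly when it carries a $g^1_2$, and a rational quotient $D_i$ furnishes such a pencil, the plan is to compute $g(D_1),g(D_2)$ from $\tau$ and then argue that a rational quotient is also necessary.

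Concretely I would work over the $t$-line $\PP^1=E/\iota_E$ with $E:y^2=(t-a_1)(t-a_2)(t-a_3)$ and with $b_x,b_y$ the images of $x,y$. For $\tau=0$ the section of $\cO_E(4O)$ cutting out $B$ is $(t-b_x)(t-b_y)$, so $C$ is the normalised fibre product $\{\,y^2=(t-a_1)(t-a_2)(t-a_3),\ w^2=(t-b_x)(t-b_y)\,\}$; here $D_1=C/\widetilde{\iota}=\{w^2=(t-b_x)(t-b_y)\}$ is rational and $C$ is hyperelliptic. For $\tau=\cO_E(e_k-O)$ one has $\cL=\cO_E(O+e_k)$; using the relation $w'^2(t-a_k)=(t-b_x)(t-b_y)$ together with $y^2=\prod_i(t-a_i)$, and setting $u=w'y$, $v=w'(t-a_k)$, presents the three quotients as $E$, as $\{u^2=(t-b_x)(t-b_y)\prod_{i\neq k}(t-a_i)\}$ and as $\{v^2=(t-a_k)(t-b_x)(t-b_y)\}$, all three of genus $1$. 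Thus the two configurations $(g(D_1),g(D_2))=(0,2)$ and $(1,1)$ correspond exactly to $\tau=0$ and $\tau\neq0$.

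The remaining and genuinely delicate point is the necessity of a rational quotient: I must exclude a hyperelliptic involution lying outside $V_4$ when all three quotients are elliptic, and I expect this to be the main obstacle. I would resolve it by a fixed-point count. When the three quotients are elliptic, each of $\sigma,\widetilde{\iota},\sigma\widetilde{\iota}$ is bielliptic and so has exactly $4$ fixed points, whereas the hyperelliptic involution of a genus-$3$ curve has $8$; hence a hyperelliptic $\kappa$ could not lie in $V_4$. It would then be central, generate $\langle V_4,\kappa\rangle\cong\ZZ_2^{3}$, and descend to a nontrivial involution of $\PP^1=E/\iota_E$ permuting the six points $a_1,a_2,a_3,\infty,b_x,b_y$ while respecting their $V_4$-monodromy labels, a symmetry that the six points defining the covering do not admit. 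In the opposite case $\widetilde{\iota}$ has a rational quotient and $8$ fixed points, so it is the hyperelliptic involution. Combining the two directions shows that $C$ is hyperelliptic of genus $3$ if and only if $\tau=0$, that is, if and only if $\cL$ is the hyperelliptic bundle.
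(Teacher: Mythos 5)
Your reduction of the statement to the dichotomy $\tau=0$ versus $\tau\neq 0$, the lifting of $\iota_E$ to an involution $\widetilde{\iota}$ on $C$ (legitimate, since the four fixed points of $\iota_E$ are distinct from the four branch points, so Corollary \ref{cor:liftinv} applies), and the explicit models showing that $\tau=0$ yields a rational quotient $D_1$ (hence $C$ hyperelliptic) while $\tau\neq0$ makes all three quotients $E,D_1,D_2$ elliptic, are all correct. The gap is exactly at the step you yourself flagged as delicate, and your resolution of it fails: the assertion that the six branch points ``do not admit'' an involution of $\PP^1$ respecting the three monodromy pairs $\{b_x,b_y\}$, $\{a_k,\infty\}$, $\{a_i,a_j\}$ is given without proof and is in fact false for special configurations, which the lemma must cover (it is applied in Proposition \ref{prop:samedivisor} to an arbitrary isotropic Klein covering, with no genericity available). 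For example, with $E:y^2=t(t^2-1)$, $a_k=0$, $\{a_i,a_j\}=\{1,-1\}$, $\{b_x,b_y\}=\{2,-2\}$, the involution $z\mapsto -z$ preserves all three pairs; and for the configuration $\{a_k,\infty\}=\{0,\infty\}$, $\{a_i,a_j\}=\{q,1/q\}$, $\{b_x,b_y\}=\{r,1/r\}$, the involution $z\mapsto 1/z$ swaps all three pairs. So the existence of a central hyperelliptic $\kappa\notin V_4$ is not contradicted by your argument. What is missing is a proof that no label-respecting involution $\bar\kappa$ of $\PP^1$ can lift to an involution of $C$ with $8$ fixed points. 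A constraint you did not exploit is that the $8$ Weierstrass points of $C$ would have to fill two entire fibres of $C\ra\PP^1$ over $\Fix(\bar\kappa)$, so that $\Fix(\bar\kappa)$ must avoid the six branch points; this disposes of $z\mapsto -z$ in the first example (its fixed points $0,\infty$ are branch points), but not of $z\mapsto 1/z$ in the second (fixed points $\pm1$), where a genuine further argument about the lifts is still required. As written, your proof establishes the lemma only for configurations admitting no such symmetry, i.e.\ generically, which is not enough for the use made of it in the paper.

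For comparison, the paper's proof is entirely different and avoids all monodromy and automorphism casework: by the projection formula, $h^0(C,\pi^*\cO_E(H_E))=h^0(E,\cO_E(H_E))+h^0(E,\cO_E(H_E)\otimes\cL^{-1})$. If $C$ is hyperelliptic, then $\pi^*\cO_E(H_E)\simeq\cO_C(2H_C)\simeq\omega_C$ has $h^0=3$, which forces $h^0(E,\cO_E(H_E)\otimes\cL^{-1})=1$, i.e.\ $\cL\simeq\cO_E(H_E)$; conversely, if $\cL\simeq\cO_E(H_E)$, the same formula gives $h^0(C,\pi^*\cO_E(H_E))=3$, and Clifford's theorem then shows $C$ is hyperelliptic. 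This handles both directions uniformly for every configuration. You could either replace your final step by this cohomological argument, or keep your $V_4$-framework and supply the missing analysis of lifts of label-respecting involutions; the former is considerably shorter and is what the paper does.
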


\begin{proof}
Let $B = x + \iota_E x +  y + \iota_E y$ be the branch divisor of the covering $\pi: C\ra E$
and $\eta \in \Pic^{2}(E)$ be the line bundle defining $\pi$, so $\eta^2\simeq \cO_E(B) = 
\cO_E(2H_E)$, where $H_E$ denotes the (hyper)elliptic divisor.
Suppose that $C$ is hyperelliptic with hyperelliptic divisor $H_C$. 
Then $\pi^* H_E = \pi^* (2w_1) = 2H_C$
%Since the Weierstrass
%points of $C$ map onto the Weierstrass points of $E$, we have $\pi^* H_E = \pi^* (2w_1) = 2H_C$.

By the projection formula 
\begin{eqnarray*}
H^0(C, \cO_C (2H_C) ) &=& H^0(C, \pi^*\cO_E(H_E))\\
 &=& H^0(E, \cO_E(H_E) \otimes \pi_*(\cO_C))\\
&=& H^0(E, \cO_E(H_E)) \oplus H^0(E,\cO_E(H_E)) \otimes \eta^{-1}).
\end{eqnarray*}

Since $h^0(C, \cO_C (2H_C)) = h^0(C, \omega_C) =3$, necessarily 
$h^0(E,\cO_E(H_E)) \otimes \eta^{-1})=1$, hence $\eta \simeq \cO_E(H_E))$.

Conversely, if $\eta = \cO_E(H_E))$ then, again by the projection formula, 
one computes 
$$
h^0(C, \pi^*\cO_E(H_E)) = h^0(E, \cO_E(H_E)) + h^0(E, \cO_E(H_E) \otimes \eta^{-1}) = 3.
$$
According to Clifford's theorem $C$ is hyperelliptic and  $\pi^*\cO_E(H_E)$ a
multiple of the hyperelliptic bundle.

\end{proof}

\subsection{Curves with a $\ZZ_2^2$ group of involutions}
We start by considering involutions on hyperelliptic curves.

\begin{lem}\label{fpf-inv}
Let $X$ be a smooth hyperelliptic curve  of genus $g=2k+1$.  %Assume $Aut(X) \subset \langle \iota, \sigma, \tau\rangle \simeq \ZZ^2_2$, 
with $\iota$ the hyperelliptic involution. For any $\tau \in Aut(X) $, $\tau^2 = \id, \ 
\tau \neq \iota$ we have 
$$
\Fix (\tau) =\emptyset, \  |\Fix(\iota\tau)| =4,   \quad \mbox{or} \quad | \Fix (\tau)| =4, \ \Fix(\iota\tau) =\emptyset.
$$
\end{lem}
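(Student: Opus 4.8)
The plan is to use the centrality of the hyperelliptic involution together with a Hurwitz count. Since $X$ is hyperelliptic of genus $g=2k+1\ge 2$, the involution $\iota$ is canonical and hence central in $\Aut(X)$; therefore $\tau$ commutes with $\iota$, the product $\iota\tau$ is again an involution, and $\{\id,\iota,\tau,\iota\tau\}$ forms a Klein four-group acting on $X$. Passing to the quotient $\pi\colon X\ra X/\iota=\PP^1$, the automorphism $\tau$ descends to an involution $\bar\tau$ of $\PP^1$. This $\bar\tau$ is nontrivial: if it were the identity, then $\tau$ would preserve every fibre of $\pi$ and so belong to the Galois group $\{\id,\iota\}$ of $\pi$, contradicting $\tau\ne\id,\iota$. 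A nontrivial involution of $\PP^1$ has exactly two fixed points $p_1,p_2$.

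Next I would record that, exactly as in the proof of Lemma \ref{lem:lift}, Hurwitz's formula for any involution $\alpha$ on $X$ reads $4k=2(2g_\alpha-2)+|\Fix(\alpha)|$, so $|\Fix(\alpha)|=4(k-g_\alpha+1)$ is divisible by $4$; this is precisely where the hypothesis that $g$ be odd enters. In particular both $|\Fix(\tau)|$ and $|\Fix(\iota\tau)|$ are multiples of $4$.

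I would then locate and count these fixed points. A point $x\in X$ is fixed by $\tau$ or by $\iota\tau$ iff $\tau(x)\in\{x,\iota(x)\}$, i.e.\ iff $\pi(x)\in\Fix(\bar\tau)$; hence $\Fix(\tau)\cup\Fix(\iota\tau)\subseteq\pi^{-1}\{p_1,p_2\}$. Over each $p_i$ there are two possibilities. If $p_i$ is not a branch point of $\pi$, its fibre is a pair $\{x,\iota x\}$ which $\tau$ either fixes pointwise (then both lie in $\Fix(\tau)$ and neither in $\Fix(\iota\tau)$) or interchanges (then both lie in $\Fix(\iota\tau)$ and neither in $\Fix(\tau)$); either way the fibre contributes $2$ to the sum $|\Fix(\tau)|+|\Fix(\iota\tau)|$. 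If $p_i$ is a branch point, its fibre is a single Weierstrass point $w$, which is fixed by $\tau$ and, being $\iota$-fixed, also by $\iota\tau$, again contributing $2$ to the sum. Thus in all cases $|\Fix(\tau)|+|\Fix(\iota\tau)|=4$.

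The conclusion now follows from the divisibility constraint: the only way to write $4$ as a sum of two nonnegative multiples of $4$ is $4=4+0$, so one of the two sets is empty and the other has exactly four points, as claimed. The point I expect to require the most care is the branch-point case, where a priori $\Fix(\tau)$ and $\Fix(\iota\tau)$ could share a Weierstrass point; this would force both $|\Fix(\tau)|$ and $|\Fix(\iota\tau)|$ to be positive and, as the count shows, would produce totals like $\{1,3\}$ or $\{2,2\}$ that are incompatible with divisibility by $4$. Hence no fixed point of $\bar\tau$ is a branch point, and the clean dichotomy of the statement holds.
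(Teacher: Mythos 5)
Your proof is correct, but it takes a genuinely different route from the paper's. The paper appeals to Accola's theorem \cite{A} applied to the Klein group $\langle \iota,\tau\rangle$ (using that one quotient, $X/\iota\cong\PP^1$, and the quotient by the whole group both have genus $0$) to obtain the genus relation $g(X/\tau)+g(X/\iota\tau)=2k+1$, and then combines this with the Hurwitz bound $g(X/\alpha)\le k+1$ to force the split $\{k,k+1\}$. You instead prove the needed numerical relation by hand: descending $\tau$ to an involution $\bar\tau$ of $\PP^1$ and counting points of $\Fix(\tau)\cup\Fix(\iota\tau)$ fibre by fibre over the two fixed points of $\bar\tau$ gives $|\Fix(\tau)|+|\Fix(\iota\tau)|=4$, which via Hurwitz is exactly equivalent to the Accola relation \eqref{Accola} in this special case; the divisibility-by-$4$ step (the same computation as in Lemma \ref{lem:lift}) then forces the split $(4,0)$ or $(0,4)$. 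Both arguments rest on the centrality of $\iota$ (the paper needs it for $\langle\iota,\tau\rangle$ to be a Klein group; you need it for $\tau$ to descend and for $\iota\tau$ to be an involution). What your approach buys is self-containedness and some extra geometric information for free --- namely that no fixed point of $\bar\tau$ is a branch point of $X\ra\PP^1$, so the fixed points of $\tau$ or $\iota\tau$ are never Weierstrass points --- at the cost of a longer case analysis; the paper's proof is shorter because it outsources the counting to a cited theorem, and its bookkeeping in terms of quotient genera is what gets reused directly in the subsequent Lemma \ref{lem-fpf}.
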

\begin{proof}
According to Accola's Theorem (\cite[Theorem 5.9]{A}) applied to the group $G_0=\langle \iota, \tau \rangle$ we have
\begin{equation}\label{Accola}
 2(2k+1) = 2g(X/ \tau ) + 2g(X/ \iota\tau ).   
\end{equation}
By Hurwitz formula one deduces that $g(X/\tau )\leq k+1$ and $g(X/ \iota\tau )\leq k+1$.
Together with \eqref{Accola} this implies $g(X/\tau )= k+1$ and $g(X/ \iota\tau )= k$
or vice versa.
\end{proof}
\begin{rem}
Geometrically, Lemma \ref{fpf-inv} means that if a hyperelliptic curve of genus $g=2k+1$ is a double covering then the covering is \'etale or branched in 4 points. Moreover, if one case occurs then the other, too.  
\end{rem}

\begin{lem}\label{lem:hypgen3}
Let $C$ be a genus 3 curve with two involutions. Then
$C$ is hyperelliptic %with the hyperelliptic involution $\s$ 
if and only if $C$ is an \'etale double cover of a genus 2 curve $H$. 
\end{lem}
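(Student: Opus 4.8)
The plan is to prove the two implications separately, in each case reducing everything to the Hurwitz formula combined with the fixed-point bookkeeping of Lemmas~\ref{fpf-inv} and~\ref{lem:lift}.

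For the direction $(\Rightarrow)$, I would assume $C$ is hyperelliptic, with hyperelliptic involution $\iota$ (which is central in $\Aut(C)$). Since $C$ carries two involutions, at least one of them, say $\tau$, is distinct from $\iota$. I would then apply Lemma~\ref{fpf-inv} with $g=3$ (so $k=1$) to $\tau$: it guarantees that exactly one of $\tau$ and $\iota\tau$ is fixed-point free. Call this involution $\theta$ (note $\iota\tau$ is indeed an involution because $\iota$ is central). Then $C\to C/\theta$ is étale, and Hurwitz in the form $2\cdot 3-2 = 2(2g'-2)$ forces $g(C/\theta)=2$, exhibiting $C$ as an étale double cover of a genus $2$ curve.

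For the direction $(\Leftarrow)$, suppose $f\colon C\to H$ is an étale double cover of a genus $2$ curve $H$, with deck involution $\tau$. The idea is to transport the hyperelliptic involution $\iota_H$ of $H$ upstairs. Since $\iota_H$ has six fixed points (the Weierstrass points of $H$) and $f$ is étale, none of these is a branch point, so by Corollary~\ref{cor:liftinv} the two lifts $\tilde\iota$ and $\tilde\iota\tau$ are involutions on $C$. By Lemma~\ref{lem:lift} their fixed loci are disjoint, each of cardinality divisible by $4$, and together they account for $f^{-1}(\Fix(\iota_H))$, so that $|\Fix(\tilde\iota)|+|\Fix(\tilde\iota\tau)|=12$. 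On the other hand, Hurwitz applied to an involution on the genus $3$ curve $C$ forces its number of fixed points to lie in $\{0,4,8\}$. The only way to write $12$ as a sum of two elements of $\{0,4,8\}$ is $4+8$, so one of the two lifts has exactly eight fixed points; its quotient then has genus $0$, meaning $C$ carries a $g^1_2$ and is therefore hyperelliptic.

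The step I expect to require the most care is this final fixed-point count: neither the global constraint from Lemma~\ref{lem:lift} (total equal to $12$) nor the local genus-$3$ restriction from Hurwitz ($\in\{0,4,8\}$) alone pins down the distribution, and it is precisely their combination that forces one lift to be a hyperelliptic involution. Once this is settled, the argument in fact shows that \emph{every} étale double cover of a genus $2$ curve is already hyperelliptic — reassuringly consistent with Proposition~\ref{farkas-lemma}, since on a genus $2$ Jacobian every nonzero $2$-torsion point is a difference of two Weierstrass points. Thus the hypothesis of a second involution is genuinely needed only in the forward direction, where it supplies the fixed-point-free involution producing the genus $2$ quotient.
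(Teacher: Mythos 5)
Your forward implication coincides with the paper's: the paper also deduces it as the $k=1$ case of Lemma \ref{fpf-inv}, passing to the quotient by whichever of $\tau$, $\iota\tau$ is fixed-point free, so there is nothing to add there. Your converse, however, is a genuinely different route. The paper disposes of it in one line via Proposition \ref{farkas-lemma}: on a genus 2 curve every nonzero point of $JH[2]$ is a difference of two Weierstrass points (cf.\ Remark \ref{rem:Weil} and the proof of Theorem \ref{non-iso-thm}), so the line bundle defining any \'etale double cover automatically satisfies the criterion and $C$ is hyperelliptic. You instead lift $\iota_H$ upstairs and play the count of Lemma \ref{lem:lift} ($|\Fix(\tilde\iota)|+|\Fix(\tilde\iota\tau)|=12$, each summand divisible by $4$) against the genus-3 Hurwitz constraint ($|\Fix|\in\{0,4,8\}$), forcing one lift to have $8$ fixed points and hence genus-$0$ quotient. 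This is correct and purely topological, and it has the merit of exhibiting the hyperelliptic involution of $C$ explicitly as a lift of $\iota_H$, which is precisely the picture the paper exploits later (e.g.\ in Sections \ref{sec:niso} and \ref{sec:iso}); the paper's argument is shorter but leans on the line-bundle criterion.

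There is one unjustified step you should patch: you speak of ``the two lifts'' of $\iota_H$ without arguing that $\iota_H$ lifts at all. Corollary \ref{cor:liftinv} (and Lemma \ref{lem:lift}) only says that a lift, \emph{once it exists}, is an involution rather than an order-4 automorphism; existence is a separate matter and is not automatic -- an automorphism $\alpha$ of $H$ lifts to the cover defined by $\eta\in JH[2]\setminus\{0\}$ if and only if $\alpha^*\eta=\eta$, which fails for general $\alpha$ and general $\eta$. For the hyperelliptic involution the gap is easily filled: $\iota_H$ acts as $-1$ on $JH$, hence as the identity on the $2$-torsion $JH[2]$, so $\iota_H^*\eta=\eta$ and $\iota_H$ preserves the index-2 subgroup of $\pi_1(H)$ defining the covering; therefore the two lifts exist. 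With that sentence added, your proof is complete.
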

\begin{proof}
One implication is the case $k=1$ of Lemma \ref{fpf-inv}. The other follows from Proposition \ref{farkas-lemma}. 
\end{proof}

\begin{lem}\label{lem-fpf}
With the assumptions of Lemma \ref{fpf-inv} and $\sigma, \tau$ fix-point free involutions, then $\sigma\tau$
is also a fix-point-free involution for $k$ an even number.
\end{lem}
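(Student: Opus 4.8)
The plan is to reduce everything to an explicit computation on the hyperelliptic equation, using that the hyperelliptic involution $\iota$ is central in $\Aut(X)$. First I would record the standing hypothesis that $\sigma,\tau$ commute (so that $\langle\sigma,\tau\rangle\cong\ZZ_2^2$ and $\sigma\tau$ is at least an involution), and check that $\sigma\tau\notin\{\id,\iota\}$: if $\sigma\tau=\iota$ then $\tau=\iota\sigma$, and since $\sigma$ is fixed-point free Lemma \ref{fpf-inv} would give $|\Fix(\tau)|=|\Fix(\iota\sigma)|=4$, contradicting that $\tau$ is fixed-point free. Hence $\sigma\tau$ is an involution different from $\iota$, so by Lemma \ref{fpf-inv} its fixed locus is either empty or consists of exactly $4$ points, and the whole content is to decide which happens. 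Because $\iota$ is central, $\sigma,\tau,\sigma\tau$ descend to commuting involutions $\bar\sigma,\bar\tau,\overline{\sigma\tau}$ of $\PP^1=X/\iota$; moreover no branch point of $X\to\PP^1$ can be fixed by any of these, since a fixed Weierstrass point would be fixed simultaneously by a lift and by its $\iota$-translate, again contradicting Lemma \ref{fpf-inv}.

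Next I would normalise coordinates. Two distinct commuting involutions of $\PP^1$ can be simultaneously put in the form $\bar\sigma\colon x\mapsto -x$ (fixed points $0,\infty$) and $\bar\tau\colon x\mapsto a/x$ (fixed points $\pm\sqrt a$), so that $\overline{\sigma\tau}\colon x\mapsto -a/x$ has fixed points $\pm i\sqrt a$. Writing the hyperelliptic equation as $y^2=f(x)$ with $f(x)=\prod_{w\in W}(x-w)$ and $|W|=2g+2=4k+4$, the $\langle\bar\sigma,\bar\tau\rangle$-invariance of the branch locus $W$ (which avoids all six fixed points above) gives two facts I would extract: $f$ is even, i.e.\ $f(-x)=f(x)$, and, pairing each $w\in W$ with $a/w\in W$, the product $N:=\prod_{w\in W}w$ equals $a^{2k+2}$. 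In particular $\sqrt N=\pm a^{k+1}$, which is exactly the relation that forces the two lifts of each involution to split as one fixed-point-free and one with four fixed points, as predicted by Lemma \ref{fpf-inv}.

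Finally I would lift explicitly and read off the parity. The evenness of $f$ gives the two lifts of $\bar\sigma$ as $(x,y)\mapsto(-x,\pm y)$, of which $(x,y)\mapsto(-x,y)$ fixes the four points over $0,\infty$ while $(x,y)\mapsto(-x,-y)$ is fixed-point free; since $\sigma$ is fixed-point free we must have $\sigma\colon(x,y)\mapsto(-x,-y)$. Similarly the lifts of $\bar\tau$ are $\tau_{\pm}\colon(x,y)\mapsto\bigl(a/x,\pm\tfrac{\sqrt N}{x^{2k+2}}\,y\bigr)$, and computing fixed points over $\pm\sqrt a$ shows $\tau_\epsilon$ is fixed-point free precisely when $a^{k+1}=-\epsilon\sqrt N$; let $\tau=\tau_\epsilon$ be this fixed-point-free lift. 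Composing, $\sigma\tau\colon(x,y)\mapsto\bigl(-a/x,\,-\epsilon\tfrac{\sqrt N}{x^{2k+2}}\,y\bigr)$, and a point over $x=\pm i\sqrt a$ is fixed exactly when $x^{2k+2}=-\epsilon\sqrt N$. Since $(\pm i\sqrt a)^{2k+2}=(-1)^{k+1}a^{k+1}$, substituting the fixed-point-free relation $a^{k+1}=-\epsilon\sqrt N$ collapses this to the single condition $(-1)^{k+1}=1$. Thus $\sigma\tau$ has fixed points if and only if $k$ is odd, i.e.\ $\sigma\tau$ is fixed-point free if and only if $k$ is even, as claimed.

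I expect the main obstacle to be the sign bookkeeping in the last paragraph: one must first establish $N=a^{2k+2}$ (equivalently $\sqrt N=\pm a^{k+1}$) from the invariance of $W$, then correctly transport the sign $\epsilon$ that makes $\tau$ fixed-point free into the computation for $\sigma\tau$; the parity of $k$ enters solely through $(\pm i\sqrt a)^{2k+2}=(-1)^{k+1}a^{k+1}$, so every factor must be tracked exactly for that $(-1)^{k+1}$ to survive. A secondary point to justify carefully is that the two commuting involutions of $\PP^1$ may indeed be normalised simultaneously to $x\mapsto -x$ and $x\mapsto a/x$.
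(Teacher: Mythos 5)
Your proof is correct, but it takes a genuinely different route from the paper's. The paper's argument is purely numerical: Accola's theorem applied to the Klein group $\langle\sigma,\tau\rangle$ gives $2k+1+2g_0=g_{\sigma}+g_{\tau}+g_{\sigma\tau}$; since $\sigma,\tau$ are fixed-point free, Hurwitz gives $g_{\sigma}=g_{\tau}=k+1$, so comparing parities of the two sides forces $g_{\sigma\tau}$ to be odd, and the dichotomy $g_{\sigma\tau}\in\{k,k+1\}$ from Lemma \ref{fpf-inv} then yields $g_{\sigma\tau}=k+1$, i.e.\ $\sigma\tau$ fixed-point free, precisely when $k$ is even. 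You instead work with an explicit hyperelliptic model: pushing the involutions down to $\PP^1=X/\iota$, normalising them to $x\mapsto -x$ and $x\mapsto a/x$, and tracking signs in the lifts, with the parity of $k$ entering only through $(\pm i\sqrt a)^{2k+2}=(-1)^{k+1}a^{k+1}$. I checked your computations and they hold up: invariance of the branch locus does give $f$ even and $N=a^{2k+2}$ (the latter is also exactly what makes $\tau_{\pm}$ involutions rather than order-four elements), the identification of the fixed-point-free lifts is right, and the final substitution is sound. One step you should spell out is the claim that $(x,y)\mapsto(-x,y)$ fixes four points: two lie over $x=0$, but the other two are the points at infinity, and that they are fixed uses that in the chart $u=1/x$, $v=y/x^{g+1}$ the map acts by $(u,v)\mapsto(-u,(-1)^{g+1}v)$ with $g+1=2k+2$ even --- this is a genuine (if routine) verification, and it matters, since flipping that sign would flip which lift of $\bar\sigma$ is $\sigma$ and hence the final parity. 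What the paper's approach buys is brevity and uniformity (it reuses Accola's theorem, already the engine of Lemma \ref{fpf-inv}, with no coordinates or case analysis); what yours buys is that it is elementary and constructive --- it exhibits explicit equations for $\sigma$, $\tau$ and $\sigma\tau$, and it proves the converse at no extra cost: for $k$ odd, $\sigma\tau$ has exactly four fixed points, lying over $\pm i\sqrt a$. Note finally that both proofs implicitly assume $\sigma\neq\tau$ and $\sigma\tau=\tau\sigma$ (without which $\sigma\tau$ need not be an involution); you at least make the commutativity hypothesis explicit, which is a point the paper leaves tacit.
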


\begin{proof}
Set $g_0=g(X/ \langle  \tau, \sigma\tau \rangle)$ and $g_{\alpha}=g(X/ \alpha )$,
for $\alpha  \in Aut(X)$.  We apply Accola's theorem (\cite{A}) to the group $G_0=\langle  \tau, \sigma\tau \rangle$ to
obtain
\begin{eqnarray*}
2(2k+1) + 4g_0 & = & 2g_{\sigma} + 2g_{\tau} + 2g_{\sigma\tau}\\
2k+1 +2g_0 & = &g_{\sigma} + g_{\tau}  + g_{\sigma\tau} \\
 &=& 2k + 2 + g_{\sigma\tau}.
\end{eqnarray*}
Since the left hand side is odd we conclude that $g_{\sigma\tau}$ is odd. By Lemma \ref{fpf-inv} applied to 
$\sigma\tau$ we get that $g_{\sigma\tau} = k+1$ or $g_{\sigma\tau} = k$. Thus, if $k$ is even
$g_{\sigma\tau} = k+1$ and $\sigma\tau$ is fix-point-free.
\end{proof}
\begin{rem}
The fact that $X$ is hyperelliptic in Lemma \ref{lem-fpf} is crucial. Proposition \ref{prop:involutionj} shows an example when involutions $\s,j$ are fixed point free on a non-hyperelliptic genus 5 curve and $j\s$ has 8 fixed points. 
\end{rem}

For non-hyperelliptic curves of genus 3, we get the following results.
\begin{lem}\label{lem:nonhyp3}
Let $\s,\t\in\Aut(C)$ be two commuting involutions on a non-hyperelliptic genus 3 curve $C$.
Then $C$ is a double covering of three elliptic curves, i.e. all 3 involutions are
bielliptic. In this case the sum of points in $\Fix(\s)$ (respectively in $\Fix(\t)$ and $\Fix(\s\t)$) form a
divisor linearly equivalent to $K_C$. Moreover, $C$ is a $4:1$ covering of $\PP^1$ with the 
monodromy group isomorphic to the Klein four-group and the branching divisor being 6 points (images of 12 fixed points).
\end{lem}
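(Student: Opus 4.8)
The plan is to combine the Hurwitz formula applied to each individual involution with the earlier classification in Lemma \ref{lem:hypgen3}, and then to analyse the full quotient by the group $G=\langle\sigma,\tau\rangle$. Since $\sigma,\tau$ are commuting involutions with $\sigma\neq\tau$ and neither equal to the identity, the set $G=\{\id,\sigma,\tau,\sigma\tau\}$ is closed under composition and isomorphic to the Klein four-group, so there are exactly three non-trivial involutions to treat.

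First I would run Hurwitz on each $\alpha\in G$. For the degree $2$ map $C\to C/\alpha$ the formula reads $4=2(2g_\alpha-2)+|\Fix(\alpha)|$, whence $|\Fix(\alpha)|=8-4g_\alpha$ and $g_\alpha\in\{0,1,2\}$. The case $g_\alpha=0$ would present $C$ as a double cover of $\PP^1$, i.e. hyperelliptic, and the case $g_\alpha=2$ would present $C$ as an \'etale double cover of a genus $2$ curve, which by Lemma \ref{lem:hypgen3} again forces $C$ to be hyperelliptic. Both are excluded by hypothesis, so $g_\alpha=1$ and $|\Fix(\alpha)|=4$ for each of $\sigma,\tau,\sigma\tau$; this is exactly the assertion that all three quotients are elliptic and that $C$ is bielliptic in three ways.

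For the statement on $K_C$, I would compare canonical divisors under $\pi_\alpha\colon C\to E_\alpha:=C/\alpha$. The ramification is simple and supported exactly on the four points of $\Fix(\alpha)$, so Riemann--Hurwitz gives $K_C=\pi_\alpha^*K_{E_\alpha}+\sum_{P\in\Fix(\alpha)}P$; since $E_\alpha$ is elliptic its canonical class is trivial, and therefore $\sum_{P\in\Fix(\alpha)}P\sim K_C$ for each of the three involutions, as claimed.

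It remains to identify $C/G$ and its branch locus, and here lies the only real subtlety. The key point is that no point of $C$ is fixed by more than one non-trivial element of $G$: the stabiliser of a fixed point acts faithfully on the one-dimensional tangent space, hence embeds in $\CC^*$ and must be cyclic, so it cannot equal the non-cyclic group $V_4$. Consequently $\Fix(\sigma)$, $\Fix(\tau)$ and $\Fix(\sigma\tau)$ are pairwise disjoint, yielding $12$ ramification points in total, each with stabiliser of order $2$. Moreover $\tau$ maps $\Fix(\sigma)$ to itself and, by disjointness, acts freely there, so the four points form two $G$-orbits $\{P,\tau P\}$ and descend to $2$ points of $C/G$; the same holds for the other two fixed sets, giving exactly $6$ branch points. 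Feeding the $12$ simple ramification points into the Hurwitz formula for the degree $4$ map, $4=4(2g_{C/G}-2)+12$, forces $g_{C/G}=0$, so $C/G\cong\PP^1$; the covering is Galois with group $V_4$, which is therefore its monodromy group, and the $6$ branch points are the images of the $12$ fixed points. The main obstacle is precisely this disjointness of the fixed sets together with the orbit count, since everything else reduces to direct Hurwitz computations resting on Lemma \ref{lem:hypgen3}.
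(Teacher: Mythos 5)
Your proof is correct and takes essentially the same route as the paper: Lemma \ref{lem:hypgen3} together with the non-hyperelliptic hypothesis forces all three quotients to be elliptic, the triviality of the canonical class on an elliptic curve gives $\sum_{P\in\Fix(\alpha)}P\sim K_C$, and the orbit analysis of the fixed-point sets combined with Hurwitz for the degree $4$ map yields $C/G\cong\PP^1$ with $6$ branch points and Klein monodromy. The only difference is that you spell out what the paper asserts tersely --- namely the pairwise disjointness of $\Fix(\sigma)$, $\Fix(\tau)$, $\Fix(\sigma\tau)$ via the cyclic-stabiliser argument and the deduction that $g(C/G)=0$ from Hurwitz rather than assuming it --- which is a welcome clarification but not a different method.
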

\begin{proof}
By Lemma \ref{lem:hypgen3} the quotient curves cannot be of genus 2 and by assumption cannot be of genus 0, so all involutions are 
bielliptic. The second part follows from the fact that the canonical divisor of an elliptic curve is trivial, so the canonical divisor 
of a curve equals the ramification divisor of an elliptic covering, that is the fixed points divisor of an involution. 
By the commutativity of the three involutions, the actions on $\Fix(\s)$ of $\t$ and $\s\t$ have to be equal to each other and equal 
to the pair of transpositions. In particular, the covering $C\ra C/\langle \s,\t \rangle$ has branching divisor at least as stated. 
Hurwitz formula gives $$2\cdot3-2=4\cdot(2\cdot0-2)+\deg(R),$$ hence $\deg(R)=12$ and the branching divisor is exactly as stated. 
\end{proof}

Consider the elliptic curves $E_{\sigma},E_{\tau} $ and $E_{\sigma\tau}$ as subvarieties of $JC$ as
follows
$$
E_{\sigma}= \im (1+\sigma)=\Fix(\sigma), \quad E_{\tau}=\im (1+\tau)=\Fix(\tau), \quad E_{\sigma\tau}
=\im (1+\sigma\tau)=\Fix(\sigma\tau).
$$
Here, we abuse the notation by writing the same letter for the automorphism of a curve and its extension to the Jacobian. We denote by $P(C/E)$ the Prym variety of the covering, for definition see Section \ref{sec:abvar}.
\begin{prop}\label{prop:nonhyp3}
The map $\varphi: E_{\sigma} \times E_{\tau} \times E_{\sigma\tau} \ra JC $ is an isogeny of degree 8.
\end{prop}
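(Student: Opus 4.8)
The plan is to realise $\varphi$ as a homomorphism of abelian threefolds, to verify that it is an isogeny on the level of rational homology, and then to extract the degree from the polarisation that the principal polarisation $\Theta$ of $JC$ induces on each elliptic factor. Set $\Lambda=H_1(C,\ZZ)$ with its unimodular intersection form $E$, and let $K=\langle\sigma,\tau\rangle$ act on $\Lambda$. The group $K$ has four characters, which I label $\chi_{\pm\pm}$ according to the signs of $(\sigma,\tau)$. I would first decompose $H_1(C,\QQ)$ into its $K$-isotypic components. The trivial component is $H_1(C,\QQ)^K=H_1(C/K,\QQ)=H_1(\PP^1,\QQ)=0$, using that $C/K=\PP^1$ by Lemma \ref{lem:nonhyp3}. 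On the other hand $H_1(E_\sigma,\QQ)=\im(1+\sigma)\otimes\QQ$ is the $(+1)$-eigenspace of $\sigma$, which is the $\chi_{+-}$-component, and it has rank $2$ since $E_\sigma$ has genus $1$; the same applies to $E_\tau$ (giving $\chi_{-+}$) and to $E_{\sigma\tau}$ (giving $\chi_{--}$). Thus the three rational homologies are precisely the three nontrivial isotypic components and together they exhaust $H_1(C,\QQ)$, so $\varphi_*\otimes\QQ$ is an isomorphism and $\varphi$ is an isogeny.

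For the degree I would exploit that $E$ is $K$-invariant and that the characters of $K$ are real: if $w,w'$ lie in components for distinct characters $\chi\neq\chi'$, then $E(w,w')=(\chi\chi')(k)\,E(w,w')$ for all $k\in K$, and choosing $k$ with $(\chi\chi')(k)=-1$ forces $E(w,w')=0$. Hence the three isotypic components are mutually $E$-orthogonal. Writing $\Lambda_\bullet=\Lambda\cap H_1(E_\bullet,\QQ)$ for the saturated sublattices, we have $\deg\varphi=[\Lambda:\Lambda_\sigma\oplus\Lambda_\tau\oplus\Lambda_{\sigma\tau}]$. Comparing Gram determinants of this orthogonal decomposition with the unimodular form $E$ yields $\deg\varphi=d_\sigma\,d_\tau\,d_{\sigma\tau}$, where $d_\bullet$ is the Pfaffian of $E|_{\Lambda_\bullet}$, equivalently the exponent of the polarisation that $\Theta$ restricts to on the elliptic curve $E_\bullet$.

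The heart of the matter, and the step I expect to be the most delicate, is to show that $d_\bullet=2$ for each factor. Here I would identify $1+\sigma$ with the norm endomorphism of the subvariety $E_\sigma\subset JC$: it is symmetric for the Rosati involution, being induced by an automorphism of $C$; its image is $E_\sigma$; and $(1+\sigma)^2=1+2\sigma+\sigma^2=2(1+\sigma)$. Since $\sigma$ restricts to the identity on $E_\sigma=\Fix(\sigma)^0$, we get $(1+\sigma)|_{E_\sigma}=2\,\id$, so the exponent of the induced polarisation is exactly $d_\sigma=2$ — there is no smaller value, because the restriction of the norm endomorphism to $E_\sigma$ is precisely multiplication by that exponent. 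The identical computation gives $d_\tau=d_{\sigma\tau}=2$, and therefore $\deg\varphi=2\cdot2\cdot2=8$. The only point requiring care is the clean identification of $1+\sigma$ as the norm endomorphism together with the exactness of the exponent; once this is in place the degree follows immediately.
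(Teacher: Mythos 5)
Your strategy is genuinely different from the paper's: the paper factors $\varphi$ through $P(C/E_{\sigma\tau})\times E_{\sigma\tau}$ and computes the two kernels explicitly (the first, of order $2$, via the description of $a_\sigma^*(E_{\sigma}[2])\cap a_\tau^*(E_{\tau}[2])$ in terms of the fixed points of the involutions, citing Recillas--Rodr\'iguez; the second being the diagonal copy of $E_{\sigma\tau}[2]$, of order $4$). Your reduction is instead structural, and most of it is correct: the isotypic decomposition of $H_1(C,\QQ)$ under $\langle\sigma,\tau\rangle$ with vanishing invariant part (since $C/\langle\sigma,\tau\rangle=\PP^1$) does show that $\varphi$ is an isogeny, the isotypic pieces are orthogonal for the intersection form, and the Gram-determinant comparison correctly gives $\deg\varphi=d_\sigma d_\tau d_{\sigma\tau}$, where $d_\bullet$ is the type of the polarisation induced on $E_\bullet$ by $\Theta_C$.

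The gap sits exactly at the step you yourself flag as delicate, and your justification does not close it. The properties you verify for $f=1+\sigma$ --- Rosati-symmetric, $\im f=E_\sigma$, $f^2=2f$ (hence $f|_{E_\sigma}=2\,\id$) --- do not characterise the norm endomorphism $N_{E_\sigma}$: they hold word for word in the scenario $d_\sigma=1$, in which $\tfrac12(1+\sigma)$ is an integral symmetric idempotent and $1+\sigma=2N_{E_\sigma}$; in that scenario your formula would output $\deg\varphi=1$, so the listed properties cannot distinguish the two cases. The norm-endomorphism criterion (see \cite[Section 5.3]{bl}) requires, beyond symmetry and $f^2=ef$, that $f$ be \emph{primitive} in $\End(JC)$; primitivity of $1+\sigma$ amounts to $\sigma$ not acting as the identity on $JC[2]$, equivalently to $H_1(C,\ZZ)$ not splitting as the orthogonal direct sum of its two $\sigma$-eigenlattices. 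So you still owe an argument excluding $d_\sigma=1$. Either of the following suffices: (i) if $d_\sigma=1$, then the addition map from $E_\sigma$ times its complementary abelian subvariety to $JC$ would be an isomorphism of principally polarised abelian varieties, contradicting the irreducibility of the theta divisor of the Jacobian of a smooth curve; or (ii) bypass norm endomorphisms altogether: $E_\sigma=a_\sigma^*(JE_\sigma)$ with $a_\sigma^*$ injective because the bielliptic covering $a_\sigma:C\ra E_\sigma$ is ramified, the dual of $a_\sigma^*$ with respect to the principal polarisations is $\Nm_{a_\sigma}$, and $\Nm_{a_\sigma}\circ a_\sigma^*=[2]$, whence $(a_\sigma^*)^*\Theta_C\equiv 2\Theta_{E_\sigma}$ and $d_\sigma=2$. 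With either supplement, applied to all three involutions, your argument is complete and gives $\deg\varphi=2^3=8$.
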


\begin{proof}
Since the varieties are of the same dimension it is enough to show that the kernel of 
$\varphi$ is finite and in fact of cardinality 8. First, we consider the addition map
$$
\varphi_{\sigma\tau}: E_{\sigma} \times E_{\tau} \ra P(C / E_{\sigma\tau}), 
\qquad (a,b) \mapsto a+b,
$$
where $P(C / E_{\sigma\tau})$ denotes the Prym variety of the map $C \ra E_{\sigma\tau}$. Let $a_{\alpha}: C \ra  E_{\alpha}$ and $b_{\alpha}:  E_{\alpha} \ra \PP^1$
be the quotient maps  with $\alpha \in  \{ \sigma, \tau, \sigma\tau\}$.
Notice that the maps $a_{\alpha}^*: E_{\alpha} \ra JC$ are injective. 
Using the results in \cite[Appendix]{rr} we have that 
$$
\Ker \varphi_{\sigma\tau} = a^*_{\sigma}(E_{\sigma}[2]) \cap a^*_{\tau}(E_{\tau}[2]). 
$$
We shall give a precise description of this kernel.  Let 
\begin{eqnarray*}
\Fix(\sigma) &=& \{ S_1, S_2, \tau (S_1), \tau (S_2)\} \\
\Fix(\tau) &=& \{ T_1, T_2, \sigma (T_1), \sigma (T_2)\} \\
\Fix(\sigma\tau) &=& \{ R_1, R_2, \tau (R_1), \tau (R_2)\} 
\end{eqnarray*}
be the sets of fixed points in $C$ of the automorphisms $\sigma, \tau, \sigma\tau$ respectively.
Then for $i=1,2$
$$
\{a_{\sigma}(T_i), a_{\sigma}(R_i)\}, \quad \{a_{\tau}(R_i), a_{\tau}(S_i)\},  \quad
\{a_{\sigma\tau}(T_i), a_{\sigma\tau}(S_i)\}
$$
are the ramification points of $b_{\sigma},b_{\tau}, b_{\sigma\tau}$ respectively. Thus
\begin{eqnarray*}
E_{\sigma}[2] & = & \{ 0, [a_{\sigma}(R_1)-a_{\sigma}(T_1)],
[a_{\sigma}(R_1)-a_{\sigma}(T_2)], [a_{\sigma}(R_1)- a_{\sigma}(R_2)]\}  \\
E_{\tau}[2] & = & \{ 0, [a_{\tau}(R_1)-a_{\tau}(S_1)], [a_{\tau}(R_1)- a_{\tau}(S_2)],
[a_{\tau}(R_1)- a_{\tau}(R_2)]\}
\end{eqnarray*}
and analogously,  one can describe the 2-torsion points of $E_{\sigma\tau}$.  From this we clearly have
$$
a_\sigma^*(E_{\sigma}[2]) \cap a_{\tau}^*(E_{\tau}[2]) = \{0, [R_1 + \sigma(R_1) - (R_2 + \sigma(R_2)) ] \},
$$
by observing that $\tau(R_i) =\sigma{R_i}, \ i=1,2$.
Therefore $|\Ker \varphi_{\sigma\tau}| =2$. 

On the other hand, the kernel of the addition map
$$
\psi: P(C/ E_{\sigma\tau}) \times E_{\sigma\tau} \ra JC
$$
is given by the set $\{(c, c) \ \mid \ c \in E_{\sigma\tau}[2] \}$, since  $P(C/E_{\sigma\tau}) \cap E_{\sigma\tau}
= \ker (1+\sigma\tau) \cap E_{\sigma\tau} =E_{\sigma\tau}[2]$. It follows that the kernel of the 
composition map 
$$
E_{\sigma} \times E_{\tau}  \times E_{\sigma\tau} \stackrel{\varphi_{\sigma\tau} \times id}{\lra} 
P(C / E_{\sigma\tau})  \times E_{\sigma\tau} \stackrel{\psi}{\lra} JC
$$
has cardinality equal to $|\ker \varphi_{\sigma\tau} | \cdot | E_{\sigma\tau}[2] | = 8$.
\end{proof}

If $\Theta_{C}$ denotes the principal polarisation on $JC$, one can check that 
$\varphi^* (\Theta_C)$ is algebraically equivalent to twice the principal product
polarisation on  $E_{\sigma} \times E_{\tau} \times E_{\sigma\tau}$.
Using the fact that an embedding of an elliptic curve $E$ in a Jacobian $JC$ is
equivalent to having a covering $C \ra E$, we get the inverse statement:
\begin{cor}
If the Jacobian of a non-hyperelliptic genus 3 curve $C$ is polarised isogenous to the
product of three elliptic curves with twice 
the principal polarisation, then $\Aut(C)$ contains a $\ZZ_2^2$ group of involutions.
\end{cor}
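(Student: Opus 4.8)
The plan is to run the argument of Proposition~\ref{prop:nonhyp3} backwards: starting from the three elliptic curves inside $JC$ I will recover three bielliptic involutions of $C$ and then check that they generate a Klein four-group. Fix a polarised isogeny $\varphi\colon E_1\times E_2\times E_3\ra JC$ whose pullback $\varphi^*\Theta_C$ is algebraically equivalent to twice the principal product polarisation $2\Theta_{prod}$, where $\Theta_{prod}=p_1^*\Theta_1+p_2^*\Theta_2+p_3^*\Theta_3$ and the $p_i$ are the projections. Write $\iota_i$ for the inclusion of the $i$-th factor and $j_i=\varphi\circ\iota_i\colon E_i\ra JC$ for the composite. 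Restricting the polarisation to a factor, the cross terms $p_k^*\Theta_k$ ($k\neq i$) become trivial, so $j_i^*\Theta_C$ is algebraically equivalent to $2\Theta_i$, a line bundle of degree $2$ on $E_i$. (Comparing Euler characteristics also gives $\deg\varphi=8$, as in Proposition~\ref{prop:nonhyp3}.)

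First I would pin down that each $j_i$ is an embedding and that the associated elliptic subcover has degree exactly $2$. By the correspondence between elliptic subvarieties of $JC$ and coverings of $C$ quoted above, the image $E_i'=j_i(E_i)$ yields a covering $f_i\colon C\ra E_i'$ of degree $d_i=\deg\bigl(\Theta_C|_{E_i'}\bigr)$. If $j_i$ has degree $m_i$ onto its image then $2=\deg j_i^*\Theta_C=m_i d_i$, and since any non-constant map from a genus $3$ curve to an elliptic curve has degree $\geq 2$, necessarily $d_i=2$ and $m_i=1$. Hence $j_i$ embeds $E_i$ as $E_i'\subset JC$ and $f_i$ is a double covering; being of degree $2$ it is Galois, and I obtain a bielliptic involution $\sigma_i\in\Aut(C)$ with $C/\sigma_i=E_i'$.

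Next I would determine the action of each $\sigma_i$ on $JC$. The three factors of $E_1\times E_2\times E_3$ are pairwise orthogonal complements for $\Theta_{prod}$, hence for $2\Theta_{prod}$; since complementary abelian subvarieties are carried to complementary ones by a polarised isogeny, the curves $E_1',E_2',E_3'$ are pairwise orthogonal in $(JC,\Theta_C)$ and $(E_i')^{\perp}=\varphi\bigl(\prod_{k\neq i}E_k\bigr)=\sum_{k\neq i}E_k'$. On the other hand the Prym $P(C/E_i')=\im(1-\sigma_i)$ is by construction the complement of $E_i'=\im(1+\sigma_i)$, so $P(C/E_i')=(E_i')^{\perp}=\sum_{k\neq i}E_k'$. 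Thus $E_j'\subset P(C/E_i')$ for $j\neq i$, which means $\sigma_i$ acts as $+1$ on $E_i'$ and as $-1$ on $E_j'$ for $j\neq i$. Writing $JC$ up to isogeny as $E_1'\times E_2'\times E_3'$, the involutions act diagonally as $\diag(+,-,-)$, $\diag(-,+,-)$, $\diag(-,-,+)$; in particular they pairwise commute and $\sigma_1^*\sigma_2^*=\sigma_3^*\neq\id$. Since the natural map $\Aut(C)\hra\Aut(JC,\Theta_C)$ is injective (Torelli), the involutions $\sigma_1,\sigma_2$ commute on $C$ and $\sigma_1\sigma_2=\sigma_3$, so $\{\id,\sigma_1,\sigma_2,\sigma_3\}$ is a $\ZZ_2^2$ subgroup of $\Aut(C)$.

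I expect the crux to be the second and third steps rather than the bookkeeping: namely, translating the numerical input ``twice the principal polarisation'' into the geometric statement that the three elliptic subcovers have degree exactly $2$, and transferring the mutual orthogonality of the factors through the isogeny cleanly enough to guarantee that the resulting bielliptic involutions commute (and not merely that they exist). Once the diagonal description of the $\sigma_i^*$ on $JC$ is in place, the group-theoretic conclusion follows at once from the injectivity of the Torelli map.
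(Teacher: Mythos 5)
Your proof is correct and follows essentially the same route as the paper, whose entire argument is the sentence preceding the corollary: the restricted polarisation on each elliptic factor has degree $2$, and the correspondence between elliptic subvarieties of $JC$ and coverings $C \ra E$ then produces the three bielliptic involutions. The only difference is completeness: you also verify that the involutions commute (via complementary subvarieties and the injectivity of $\Aut(C) \ra \Aut(JC,\Theta_C)$), a point the paper leaves implicit, and this is exactly the right way to fill that gap.
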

\begin{rem}
Note that to write the proof of Proposition \ref{prop:nonhyp3} we broke the natural symmetry. It is easy to see that 
$S_i+\t(S_i)$, $T_i+\s(T_i)$ and $R_i+\t(R_i)$ are divisors fixed by all three
involutions. Since the intersection of three the 
elliptic curves in $JC$ consists of $0$ and a 2-torsion point, one sees that all the
respective differences are actually equal to each other.
\end{rem}
We summarise  the description of genus 3 curves with the following corollary.
\begin{cor}\label{cor:distpoint}
Let $C$ be a non-hyperelliptic genus 3 curve admitting a $\ZZ_2^3$ subgroup of automorphisms.
Then, its Jacobian contains a distinguished (non-zero) $2$-torsion point lying in the intersection of three
elliptic curves.
\end{cor}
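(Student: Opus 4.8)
The plan is to reduce the statement to the $\ZZ_2^2$ situation already treated in Proposition \ref{prop:nonhyp3} and the remark following it, and then to use the remaining involution to pin the point down canonically. First I would fix generators $\s,\t,\upsilon$ of the given $\ZZ_2^3\subset\Aut(C)$ and restrict attention to the subgroup $\langle\s,\t\rangle$. Since $C$ is non-hyperelliptic of genus $3$ and $\s,\t$ commute, Lemma \ref{lem:nonhyp3} guarantees that $\s,\t,\s\t$ are all bielliptic, so we obtain three elliptic curves $E_\s,E_\t,E_{\s\t}\subset JC$ exactly as in the discussion preceding Proposition \ref{prop:nonhyp3}.

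By that proposition the addition map $\varphi\colon E_\s\times E_\t\times E_{\s\t}\to JC$ is an isogeny of degree $8$, and by the remark immediately after it the triple intersection is
\[ E_\s\cap E_\t\cap E_{\s\t}=\{0,e\},\]
with $e$ a non-zero $2$-torsion point (the point whose various descriptions as differences of ramification divisors all coincide). This already furnishes a non-zero $2$-torsion point lying in the intersection of three elliptic curves, which is the existence half of the assertion.

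The one extra point I would verify, and the place where the full $\ZZ_2^3$ rather than a single $\ZZ_2^2$ is used, is that $e$ is genuinely \emph{distinguished}, i.e. canonically attached to the whole configuration. For this I would note that $\upsilon$ commutes with $\s$ and $\t$, so on the Jacobian $(1+\s)\upsilon=\upsilon(1+\s)$, whence $\upsilon(E_\s)=\upsilon(\im(1+\s))=\im((1+\s)\upsilon)=\im(1+\s)=E_\s$, using that $\upsilon$ is surjective; likewise $\upsilon(E_\t)=E_\t$. Hence $\upsilon$ preserves the finite set $E_\s\cap E_\t=\{0,e\}$, and being a group homomorphism it fixes $0$, forcing $\upsilon(e)=e$. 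Combined with $\s(e)=\t(e)=e$ (which holds because $e\in E_\s=\Fix(\s)$ and $e\in E_\t=\Fix(\t)$), this shows $e$ is fixed by all of $\langle\s,\t,\upsilon\rangle$, which is the sense in which it is distinguished.

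I expect no genuine obstacle here, only bookkeeping. The two things to get right are: that the three chosen involutions really are the bielliptic ones (guaranteed because $C$ is non-hyperelliptic, so Lemma \ref{lem:nonhyp3} rules out the hyperelliptic/genus-$2$-quotient alternative of Lemma \ref{lem:hypgen3}), and that ``the intersection of three elliptic curves'' is read as the triple intersection inside $JC$, whose order-$2$ nature is precisely the content of the remark after Proposition \ref{prop:nonhyp3}. If one wished to make $e$ independent of the chosen $\ZZ_2^2\subset\ZZ_2^3$, the natural strengthening would be to show that the $\langle\s,\t,\upsilon\rangle$-fixed locus in $JC[2]$ is exactly $\{0,e\}$; this is not needed for the statement as phrased.
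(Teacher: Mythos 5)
Your proof is correct and is essentially the paper's own argument: the corollary is stated there purely as a summary, its justification being exactly the combination of Lemma \ref{lem:nonhyp3}, Proposition \ref{prop:nonhyp3} and the remark following that proposition (equality of the three differences and the order-two triple intersection), which is precisely what you invoke. One caveat on your final step: a non-hyperelliptic genus $3$ curve is a plane quartic and $PGL_3(\CC)$ contains no subgroup isomorphic to $\ZZ_2^3$, so the hypothesis must be read as $\ZZ_2^3$ acting on $JC$ with third generator $-1$ (cf.\ Remark \ref{rmk:reducible}); hence a third curve involution $\upsilon$ as in your argument does not exist, but this is harmless, since the existence half (which is the real content) uses only $\langle\s,\t\rangle$, and your invariance computation applies verbatim to $\upsilon=-1$, for which $\upsilon(e)=e$ is automatic because $e$ is $2$-torsion.
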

%\begin{proof}
%The intersection of three elliptic curves cannot have more than 2 points, since then the addition map would have degree at least 16. On the other hand, according to Proposition  a point $R_1 + \sigma(R_1) - (R_2 + \sigma(R_2))=S_1 + \sigma(S_1) - (S_2 + \sigma(S_2)) =T_1 + \sigma(T_1) - (T_2 + \sigma(T_2)) $ certainly lies in the intersection of them.
%\end{proof}

\subsection{Some results concerning abelian varieties}\label{sec:abvar}
We start by recalling the definition of a Prym variety. A finite morphism $f:C' \ra C$ between smooth curves induces a
homomorphism of groups
$\Nm_f : JC' \ra JC$ between the Jacobians, called the {\it norm map} defined by pushing down divisors of
degree zero: 
$$
\Nm_f: [\sum_i n_i p_i] \ra [\sum_i n_i f(p_i)].
$$
When $f$ is \'etale the kernel of $\Nm_f$ consists of several connected components, so the Prym variety
 of $f$ is the defined as the connected component of $\Ker \Nm_f$ containing the zero element:
 $$P=P(C'/C):=(\Ker \Nm_f)^0.$$ 
 It is of dimension 
$\dim P = g(C') - g(C)$ (the difference of the genera of the curves) and as an abelian subvariety of $JC'$ has a natural polarization
$\Xi$, induced by the restriction of the principal polarization of $JC'$. In particular, the Prym variety can be seen as a complementary abelian subvariety to the image of $f^*(JC)$ inside $JC'$.
It is well-known that $\Xi$ is principal if  
$f$ is of degree 2, \'etale or ramified at 2 points\footnote{ The are only two other cases where the $P$ gets a principal polarisation: 
(1) $g(C) = 1$ and $g(C')=2$; and (2) $g(C)=2$ and $f$ is non-cyclic of degree 3.}. The Prym map given by 
$[f:C' \ra C] \mapsto (P, \Xi)$ is defined from a suitable moduli space of coverings of smooth curves to the 
moduli space of $D$-polarised abelian varieties $\cA_{\dim P}^D$.  

The Prym maps considered here have their image in a special loci of the moduli of $D$-polarised abelian 
threefolds $\cA_3^D$. Consider the space  $$\cB^D=\{A\in\cA_3^D : \ZZ_2^3\subset \Aut(A)\}.$$

\begin{lem}\label{lem:cB}
 The locus $\cB^D$ is of dimension 3
 and is contained in the set of abelian varieties that are isogenous to the product of elliptic curves.  
\end{lem}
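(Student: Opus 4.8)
The plan is to read off everything from the induced action of $G=\ZZ_2^3$ on the tangent space $V=T_0A\cong\CC^3$ at the origin. First I would record two elementary facts. Since an automorphism of an abelian variety is linear on the universal cover, one trivial on $V$ is the identity; hence the representation $\Aut(A)\to\mathrm{GL}(V)$ is injective and $G$ acts \emph{faithfully} on $V$. Moreover the images of the generators are commuting involutions, so they are simultaneously diagonalisable with eigenvalues $\pm1$, and $V$ decomposes into common eigenspaces $V=\bigoplus_{\chi}V_\chi$ indexed by the characters $\chi\colon G\to\{\pm1\}$.

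The isogeny statement then follows from showing that each eigenspace is a line, equivalently that exactly three distinct characters occur in $V$. If at most two distinct characters occurred, the elements of $G$ acting trivially on $V$ would form the intersection of the kernels of those characters, each of index at most $2$ in $G$; inside a group of order $8$ the intersection of at most two subgroups of index $\le 2$ has order $\ge 2$, contradicting faithfulness. Hence $V=V_{\chi_1}\oplus V_{\chi_2}\oplus V_{\chi_3}$ with three distinct characters and one-dimensional eigenspaces. The rational idempotents $p_i=\frac1{8}\sum_{g\in G}\chi_i(g)\,g\in\QQ[G]\subset\End(A)\otimes\QQ$ cut out abelian subvarieties $E_i=\im p_i$ with $T_0E_i=V_{\chi_i}$, so each $E_i$ is an elliptic curve; the addition map $E_1\times E_2\times E_3\to A$ is an isomorphism on tangent spaces, hence an isogeny. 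This places $A$ in the locus of products of elliptic curves.

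For the dimension I would argue by deformation theory. The tangent space to $\cA_3^D$ at $[A]$ is the symmetric square $\mathrm{Sym}^2V$ (of dimension $6$; up to dualising, which is irrelevant here since the characters have order two), and deformations preserving the $G$-action form the invariant subspace $(\mathrm{Sym}^2V)^G$. As $\mathrm{Sym}^2V=\bigoplus_{i\leq j}V_{\chi_i}\!\cdot V_{\chi_j}$ with $G$ acting on the $(i,j)$ summand through $\chi_i\chi_j$, and a product of two order-two characters is trivial exactly when they coincide, the invariants are the three diagonal lines $\mathrm{Sym}^2V_{\chi_i}$; thus $\dim(\mathrm{Sym}^2V)^G=3$. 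Since there are only finitely many lattice-level $\ZZ_2^3$-actions up to equivalence, $\cB^D$ is a finite union of images of such equivariant loci and so $\dim\cB^D\leq 3$. For the reverse inequality I would exhibit a $3$-parameter subfamily of products $E_1\times E_2\times E_3$, which carry the standard $\{\pm1\}^3\cong\ZZ_2^3$-action and, after adjusting the product polarisation by the isogeny above to land in $\cA_3^D$, are separated by the three $j$-invariants.

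The computations above are routine; the genuine obstacle is this lower bound for the \emph{specific} polarisation type $D$. One must verify that the family of products actually meets $\cB^D$ when $D=(1,2,2)$ or $(1,1,4)$ rather than the product type $(1,1,1)$, i.e. that a $D$-polarisation compatible with the $\ZZ_2^3$-action genuinely exists and varies in a three-dimensional family whose members are pairwise non-isomorphic. Equivalently, one must confirm that the invariant deformations computed above are unobstructed and fill out a component, which holds once one knows that the fixed locus of the $G$-action on the smooth space $\cA_3^D$ is itself smooth of the expected dimension.
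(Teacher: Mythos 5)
Your argument is correct, and its core coincides with the paper's route: the paper's entire proof is two sentences invoking the representation theory of finite abelian groups (the idempotents of $\QQ[\ZZ_2^3]$) and citing \cite{LR}, so what you have written is essentially that argument carried out in full. The part you add that the paper genuinely glosses over is the verification that \emph{exactly three distinct} characters occur in $T_0A$: your observation that the analytic representation $\Aut(A)\to\mathrm{GL}(T_0A)$ is faithful, and that two subgroups of index $\le 2$ in a group of order $8$ intersect nontrivially, is precisely what is needed to know that the eigenspaces are lines (the paper's phrase that ``$\QQ[\ZZ_2^3]$ has 3 central orthogonal idempotents'' is shorthand for this; literally $\QQ[\ZZ_2^3]\cong\QQ^8$ has eight, and only the three characters appearing in the tangent representation give nonzero factors of $A$). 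Your dimension count is also a genuine addition: the paper never proves $\dim\cB^D=3$, while your computation $(\mathrm{Sym}^2 V)^G=\bigoplus_i\mathrm{Sym}^2V_{\chi_i}$, of dimension $3$, combined with finiteness of the lattice-level actions, gives the upper bound correctly. One technical adjustment: run that argument on the Siegel space $\ch_3$ rather than on $\cA_3^D$ itself (which has quotient singularities); fixed loci of finite subgroups of the integral symplectic group acting on $\ch_3$ are smooth and totally geodesic, so the unobstructedness you worry about at the end is automatic.

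The one loose end is the lower bound, which you single out as ``the genuine obstacle'' and leave conditional; in fact it is immediate and needs no isogeny adjustment. For any type $D=\diag(d_1,d_2,d_3)$, equip $E_1\times E_2\times E_3$ with the polarization $d_1\Theta_1\boxtimes d_2\Theta_2\boxtimes d_3\Theta_3$: this is of type $D$, it is preserved by the standard $\{\pm1\}^3$-action (pullback by $(-1)$ fixes any polarization class), and the three $j$-invariants give three independent moduli. This is exactly the component of products noted in Remark \ref{rmk:reducible} of the paper, and it closes your argument for every $D$, including $(1,2,2)$ and $(1,1,4)$.
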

\begin{proof}
This fact is a consequence of the representation theory of finite abelian groups. The group algebra $\QQ[\ZZ_2^3]$ has 3 central orthogonal idempotents, hence every abelian threefold in $\cB^D$ has to be isogenous to the product of 3 elliptic curves.
This lemma can be seen as a special case of \cite[Prop 1.1]{LR}.
\end{proof}

\begin{rem}\label{rmk:reducible}
Note that in general $\cB^D$ is not irreducible. For any $D$, one of the irreducible components is the locus of products of 3 elliptic curves. 
Consider the locus of smooth non-hyperelliptic genus 3 curves with a $\ZZ_2^2$ subgroup of automorphisms  (see Lemma \ref{lem:nonhyp3}). Their Jacobians are principally polarised abelian threefolds that are not polarised products and their group of automorphisms is $\ZZ_2^3$ (because -1 does not come from automorphisms of a curve), so they form another 
component of $\cB^{(1,1,1)}$.

\end{rem}

\section{Klein coverings: Non-isotropic case}\label{sec:niso}

Let $H$ be a curve of genus 2 and $h:C \ra H$ an \'etale double covering,  so $C$ is of genus 3. Let $\sigma$  denote
the automorphism exchanging two sheets  of the covering and $j$ a lift of the hyperelliptic involution of $H$. 
Let $W=\{w_1, \ldots ,w_6\} \subset H$
be the set of Weierstrass points. The curve $C$ has the action of the Klein group
$\ZZ_2 \times \ZZ_2 = \langle j, \sigma \ :  \  j^2=\sigma^2 =1,  \ j\sigma=\sigma j \rangle$. 
So the curve $C$ is hyperelliptic (cf. Lemma \ref{lem:hypgen3}).
We consider now an \'etale double covering of $C$, $f : \tC \ra C$, where $\tC$ is of genus 5 and let $\tau$
denote the involution exchanging the sheets of the covering. Suppose that  $j$ and $\sigma$ lift to involutions
on $\tC$, which we shall denote with the same letters, in a such way that 
 $$
 \ZZ_2^3 =  \langle j, \sigma, \tau \ :  \  j^2=\sigma^2 =\tau^2=1, j\sigma=\sigma j,  \tau\sigma =\sigma \tau,  \tau
 j=j\tau \rangle.
 $$
 
Let $\eta \in JH[2]\setminus\{0\}$ be the 2-torsion point defining the covering $h$ and $\xi \in
JH[2]\setminus\{0\}$ such that $h^*\xi \in JC[2]\setminus\{0\}$ defines the double covering $f$. Assume that the
group $G = \langle \eta, \xi \rangle $ is non-isotropic. 
The following theorem summarises the results from \cite{bo} in the non-isotropic case.

\begin{thm}\label{non-iso-thm}
The following data are equivalent:

\begin{enumerate}
\item a general $(1,4)$-polarised abelian surface $A$; 
\item a triple $(H,\eta, \xi )$, with $H$ a genus 2 curve and $\eta,\ \xi$ generators of a  non-isotropic Klein
subgroup of $JH[2]$;
\item $\tC$ a hyperelliptic curve of genus 5 with $\ZZ^3_2 \subset Aut(\tC)$;
\item a genus 2 curve $H$ together with the choice of 3 points of the ramification locus of the double covering $H \ra \PP^1$;
\item the set of 6 points on $\PP^1$ with a chosen triple of them, up to projective equivalence (respecting the triple).
\end{enumerate}
\end{thm}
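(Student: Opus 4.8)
The plan is to treat $(2)$ as the hub and prove the purely combinatorial equivalences $(4)\Leftrightarrow(5)\Leftrightarrow(2)$ by hand, translating the group-theoretic datum on $H$ into a configuration of Weierstrass points, and then to invoke \cite{bo} to close the loop with the geometric data $(3)$ and $(1)$. The genuinely geometric content — that the tower $H\la C\la\tC$ produces a hyperelliptic genus $5$ curve carrying $\ZZ_2^3$, and that such curves live inside $(1,4)$-polarised abelian surfaces — is exactly what \cite{bo} establishes, so in this summary I would cite it for $(1)\Leftrightarrow(3)$ and for the passage $(2)\Leftrightarrow(3)$, while giving the dictionary between $(2)$, $(4)$ and $(5)$ a self-contained treatment.

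For $(5)\Leftrightarrow(4)$ I would use that a genus $2$ curve $H$ is the double cover of $\PP^1$ branched at the $6$ images of its Weierstrass points $w_1,\dots,w_6$, and that isomorphisms of $H$ correspond precisely to projective automorphisms of $\PP^1$ permuting these $6$ points. Hence a set of $6$ points on $\PP^1$ with a distinguished triple, taken up to projective equivalence respecting the triple, is the same datum as a pair $\bigl(H,\{w_a,w_b,w_d\}\bigr)$, which is $(4)$.

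For $(2)\Leftrightarrow(4)$ I would spell out the dictionary between nonzero $2$-torsion classes and pairs of Weierstrass points. By Corollary \ref{cor:weier} and Remark \ref{rem:Weil}, each nonzero $\eta\in JH[2]$ is uniquely of the form $\cO_H(w_i-w_j)$, hence corresponds to a pair $\{w_i,w_j\}$, and the Weil pairing of two such classes is the parity of the cardinality of the intersection of the associated pairs. Since the pairing is alternating, the Klein group $G=\langle\eta,\xi\rangle$ is non-isotropic exactly when $\langle\eta,\xi\rangle=1$, i.e. when the two pairs meet in a single Weierstrass point; writing $\eta=\cO_H(w_a-w_b)$ and $\xi=\cO_H(w_b-w_d)$ one gets $\eta+\xi=\cO_H(w_a-w_d)$, so the three nonzero elements of $G$ are precisely the three differences inside the triple $\{w_a,w_b,w_d\}$. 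This yields a bijection between non-isotropic Klein subgroups and triples of Weierstrass points, which is $(2)\Leftrightarrow(4)$; as a consistency check there are $\binom{6}{3}=20$ such triples, complementing the $15$ partitions into three pairs that will govern the isotropic case.

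Finally I would assemble the geometric equivalences recorded in \cite{bo}, using the tower set up just before the statement: from $(H,\eta,\xi)$ one forms the \'etale covering $h:C\ra H$ defined by $\eta$, with $C$ hyperelliptic of genus $3$ by Lemma \ref{lem:hypgen3}, and then $f:\tC\ra C$ defined by $h^*\xi$, with $j$ and $\sigma$ lifting (to involutions, by Corollary \ref{cor:liftinv}) and $\tau$ the sheet-swap, so that $\ZZ_2^3\subset\Aut(\tC)$ and $\tC$ is hyperelliptic of genus $5$; conversely the fixed-point-free involutions among the seven non-trivial elements of $\ZZ_2^3$ recover $C$, then $H$, and with them $\eta$ and $\xi$, as successive quotients, giving $(2)\Leftrightarrow(3)$. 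The identification $(1)\Leftrightarrow(3)$ is the embedding statement of \cite{bo}: a general $(1,4)$-polarised abelian surface carries a smooth genus $5$ hyperelliptic curve in its polarisation class with the required $\ZZ_2^3$-action, and every such curve arises this way. I expect the main obstacle to lie not in the combinatorics but in this last step — the genericity and non-degeneracy needed to match $(1)$ with $(3)$ bijectively, and the fixed-point bookkeeping that distinguishes which involutions give \'etale quotients — which is precisely where appealing to \cite{bo} is indispensable.
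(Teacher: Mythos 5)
Your combinatorial core coincides with the paper's proof: the equivalences $(2)\Leftrightarrow(4)\Leftrightarrow(5)$ are handled exactly as in the paper (every nonzero point of $JH[2]$ is uniquely a difference of two Weierstrass points, Remark \ref{rem:Weil} computes the pairing as the parity of the overlap, and $(4)\Leftrightarrow(5)$ is just the construction of genus~2 curves), and your count of $\binom{6}{3}=20$ triples versus $20$ non-isotropic Klein subgroups is a nice consistency check. Your direction $(2)\Rightarrow(3)$ (build the tower, invoke \cite{bo}) also matches the paper, which cites \cite[Theorem 4.7]{bo} at that exact spot; routing $(1)$ through $(3)$ instead of through $(2)$ (the paper uses \cite[Theorem 4.12]{bo} for $(1)\Leftrightarrow(2)$) is an inessential reshuffle.

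The gap is in $(3)\Rightarrow(2)$. This is the one implication the paper does \emph{not} delegate to \cite{bo} --- it says explicitly that it remains to be shown, and proves it --- so your plan to quote \cite{bo} for the whole passage $(2)\Leftrightarrow(3)$ leans on a reference that, judging by the paper's own structure, does not contain this direction in quotable form. Moreover your sketch of it (``the fixed-point-free involutions among the seven non-trivial elements of $\ZZ_2^3$ recover $C$, then $H$'') assumes precisely what has to be proved: that $\ZZ_2^3\subset\Aut(\tC)$ contains a Klein subgroup \emph{all} of whose non-trivial elements are fixed-point free, so that $\tC\ra \tC/\langle\sigma,\tau\rangle$ is \'etale onto a genus~2 curve. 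The paper extracts this from its preliminaries: by Lemma \ref{fpf-inv} with $k=2$ each generator may be normalised, modulo the hyperelliptic involution, to be fixed-point free, and then Lemma \ref{lem-fpf} (an Accola-type genus count, valid because $k$ is even and $\tC$ is hyperelliptic --- it fails for non-hyperelliptic curves, cf.\ Proposition \ref{prop:involutionj}) shows the product $\sigma\tau$ is fixed-point free as well, whence $g(\tC/\langle\sigma,\tau\rangle)=2$. Finally one must check that the resulting Klein subgroup of $JH[2]$ is non-isotropic; the paper deduces this from the hyperellipticity of $\tC$ (in the isotropic case the covering curve is never hyperelliptic, by Proposition \ref{farkas-lemma}), a point your write-up omits entirely. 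Both missing steps can be repaired with lemmas already available in the preliminaries, but as written your proof of $(3)\Rightarrow(2)$ is not there.
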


\begin{rem}
There is a unique line bundle of characteristic 0 associated to the $(1,4)$-polarisation, such 
that the $(-1)$-action split its space of sections in symmetric and anti-symmetric theta functions.
The anti-symmetric subspace is one-dimensional. In the first statement of Theorem \ref{non-iso-thm} 
a surface is general if the zero-set of the unique anti-symmetric theta function is a smooth curve of genus 5 
(see \cite{bo} for details).
\end{rem}
 
\begin{proof}
The equivalence $(1) \Leftrightarrow (2)$ follows from  \cite[Theorem 4.12]{bo}. The equivalence $(4) \Leftrightarrow (5)$ follows from the construction of genus 2 curves. The implication $(2) \Rightarrow (3)$
is clear from the construction of $\tC$ and \cite[Theorem 4.7]{bo}. The equivalence $(2) \Leftrightarrow (4)$
follows from the fact that any non-trivial 2-torsion point of $H$ can be written as the difference of two 
(distinct) Weierstrass points. Moreover, $\eta_1=w_i-w_j$, $\eta_2=w_k-w_l$ generate a non-isotropic subgroup of
$JH[2]$ if and only if they share a Weierstrass point, i.e. $|\{w_i, w_j \} \cap \{w_k, w_l \}|=1$, see Remark \ref{rem:Weil}. 

It remains to show the implication $(3) \Rightarrow (2)$. By Lemma \ref{fpf-inv} (with $k=2$)
one can assume that  $\tau$ and $\sigma$ are fix-point-free involutions. So, by Lemma \ref{lem-fpf},  
$\sigma\tau$ is also fix-point-free involution. In conclusion, in the non-isotropic case the three involutions
$\sigma, \tau$ and $\tau\sigma$ are indistinguishable, so $g_{\sigma}=g_{\tau}=g_{\tau\sigma}=3$ and
$H=\tC/ \langle \sigma, \tau \rangle$ is a genus 2 curve. Since $\tC$ is hyperelliptic, the Klein covering has to be non-isotropic. 
\end{proof}

\begin{cor}
Every condition in Theorem \ref{non-iso-thm} defines an irreducible variety. In particular, the moduli space of non-isotropic Klein coverings of genus 2 curves is an irreducible variety.
\end{cor}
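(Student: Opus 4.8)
The plan is to prove irreducibility for the single most transparent of the five equivalent descriptions, namely condition (5), and then to transport irreducibility across the equivalences of Theorem \ref{non-iso-thm}. The nontrivial geometric content has already been packaged into that theorem, so the work of the corollary is to pick the right model and to check that the equivalences are algebraic, not merely set-theoretic.

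First I would realise the parameter space in (5) as an explicit quotient of an obviously irreducible variety. Ordered $6$-tuples of pairwise distinct points of $\PP^1$ form a nonempty Zariski-open subset $U\subset(\PP^1)^6$; since $(\PP^1)^6$ is irreducible, so is $U$. Reading the first three coordinates as the chosen triple and the last three as its complement, the datum in (5) is precisely a point of $U$ taken modulo the diagonal action of $\mathrm{PGL}_2$ (projective equivalence) together with the action of $S_3\times S_3$ that reorders the triple and its complement independently; note that we do \emph{not} interchange the two triples, because the chosen triple is distinguished. On $\geq 3$ distinct points the $\mathrm{PGL}_2$-action is free, so a geometric quotient exists, and in any case the quotient map is surjective and continuous. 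The image of the irreducible $U$ is therefore irreducible, which gives irreducibility of the space described in (5).

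Second, I would verify that each equivalence in Theorem \ref{non-iso-thm} is induced by an algebraic morphism of the underlying moduli spaces, so that irreducibility passes along it. The cleanest way is to exhibit a tautological family over the base (5): the $6$ points serve as the branch locus of a double cover of $\PP^1$, producing the genus $2$ curve $H$ of (4) fibrewise, while the chosen triple determines the generators $\eta,\xi$ of the non-isotropic Klein subgroup of $JH[2]$ via the recipe $\eta_1=w_i-w_j$, $\eta_2=w_k-w_l$ sharing exactly one Weierstrass point, exactly as in the proof of the equivalence $(2)\Leftrightarrow(4)$. This yields a dominant morphism from the irreducible space (5) onto $\cR_2^{ni}$, and hence onto each of (1)--(4), whence all of them are irreducible.

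The point requiring care is precisely this last step: the substance of the corollary is not the irreducibility of (5), which is immediate, but the fact that the bijections of Theorem \ref{non-iso-thm} are realised by morphisms (or at least dominant rational maps) of the corresponding moduli spaces. Granting the constructions of \cite{bo}, which produce the curves, the $2$-torsion data, and the coverings as algebraic families, the only remaining verification is that these assignments vary algebraically in the parameters; once this is in hand, irreducibility of $\cR_2^{ni}$ follows by pushing forward the irreducibility of $U$.
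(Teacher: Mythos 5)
Your proposal is correct and takes essentially the same approach as the paper: the paper's proof also reduces everything to condition (5), realising it as the quotient of $(\PP^1)^{(3)} \times (\PP^1)^{(3)} \setminus \{\textrm{diagonals}\}$ by $\PP GL(2)$ — which is precisely your model $U/(S_3\times S_3)$ before the $\PP GL(2)$-quotient — and concludes irreducibility from the surjection off an irreducible variety. Your extra step checking that the equivalences of Theorem \ref{non-iso-thm} are induced by algebraic families is left implicit in the paper but is the same transport of irreducibility.
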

\begin{proof}
The set described in statement (5) is the quotient of non-ordered pairs of triples in 
$(\PP^1)^{(3)} \times (\PP^1)^{(3)} \setminus \{\textrm{diagonals} \}$  by the action of $\PP GL(2)$, 
which is clearly  an irreducible variety. 
\end{proof}

\begin{rem}
As pointed out in \cite{bo}, the conditions of Theorem \ref{non-iso-thm} carry natural 'dualisations'. In the
first condition one can take the dual surface. In the fourth and fifth condition one can take the remaining
triple. In the second condition one takes the orthogonal complement to the Klein subgroup with respect to the Weil pairing. %{\color{blue} In particular, one should be able to define duality for hyperelliptic curves of genus 5 with 8 involutions.}
\end{rem}

From now on we switch the notation of $j$ to $\iota$ in order to stress the fact that $\tC$ is hyperelliptic with hyperelliptic involution 
$\iota$.
In \cite[Theorem 5.5]{bo} we have proved that the associated Prym variety $P = P(\tC / H)$ has restricted
polarisation of type $(1,1,4)$ and is isogenous to the product of elliptic curves
$E_{\sigma} \times E_{\tau}  \times E_{\sigma\tau}$  with 
 $$
 E_{\sigma} = \tC/\langle \sigma, \iota\tau \rangle, \quad E_{\tau}  = \tC/\langle \tau, \iota\sigma \rangle, \quad 
 E_{\sigma\tau}= \tC/\langle \sigma\tau, \iota\tau \rangle.
$$
Figure \ref{fig:points-non-isotropic} backtracks the preimages of the three distinguished
points on $\PP^1$ and the ramification points of the maps. 
\begin{figure}[ht] 
    \centering
    \includegraphics[scale=0.8]{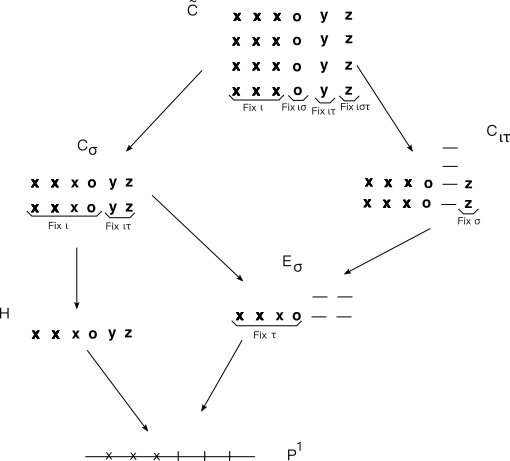}
    \caption{Ramification points (non-isotropic case)}
    \label{fig:points-non-isotropic}
\end{figure}

\begin{rem}\label{rem:startingfrompoints}
Figure \ref{fig:points-non-isotropic} shows how to construct all the curves in the covering starting from 6 points in $\PP^1$. In particular if $H$ is branched at $0,1,\infty,b_1,b_2,b_3$ then the elliptic curves $E_\s,E_\t,E_{\s\t}$ are branched at $0,1,\infty$ and $b_i$.
\end{rem}

In order to show injectivity of the Prym map, we start with some results concerning the Prym variety. 
 
\begin{lem}\label{lem:psi}
 The addition map 
$$
\varphi:  E_{\sigma} \times E_{\tau} \times E_{\sigma\tau} \ra P, \qquad (a,b,c) \mapsto a+b+c,
$$
defines an isogeny of degree 16. Moreover, there is a polarisation $\Xi$ of type $(1,1,4)$ on $P$, such that
$\varphi^*\Xi $ is algebraically equivalent
to four times the principal polarisation on $E_{\sigma} \times E_{\tau} \times E_{\sigma\tau}$.
\end{lem}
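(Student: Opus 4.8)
The plan is to compute the pullback $\varphi^*\Xi$ directly, taking for $\Xi$ the restriction $\Theta_{\tC}|_P$ of the principal polarisation of $J\tC$ to $P$; by \cite[Theorem 5.5]{bo} this $\Xi$ is of type $(1,1,4)$. Since $\varphi$ is the addition map, it is the inclusion $P\hra J\tC$ precomposed with the total addition map $m:E_\sigma\times E_\tau\times E_{\sigma\tau}\ra J\tC$, where each factor is mapped in through $a_\alpha^*$. Hence $\varphi^*\Xi=m^*\Theta_{\tC}$, and by the theorem of the cube such a pullback splits as the sum of the restrictions of $\Theta_{\tC}$ to the three factors plus the cross terms governed by the Hermitian form of $\Theta_{\tC}$ evaluated on pairs $E_\alpha,E_\beta$.

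I would first treat the diagonal terms. Each quotient map $a_\alpha:\tC\ra E_\alpha$ is a degree-$4$ covering, because the defining group (for instance $\langle\sigma,\iota\tau\rangle$ for $E_\sigma$) has order $4$. By the standard pullback formula for coverings, the restriction of $\Theta_{\tC}$ to $a_\alpha^*E_\alpha$ is then four times the principal polarisation $\Theta_{E_\alpha}$. This parallels the genus-$3$ computation in Proposition \ref{prop:nonhyp3}, where degree-$2$ quotients produced the factor $2$.

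The key structural step is the vanishing of the cross terms, and this is where I expect the main difficulty. I would identify $E_\sigma,E_\tau,E_{\sigma\tau}$ with the three distinct non-trivial isotypic components of $J\tC$ for the Galois group $V_4=\langle\sigma,\tau\rangle$ of the Klein covering $\tC\ra H$. Indeed $a_\sigma^*E_\sigma$ is fixed by both $\sigma$ and $\iota\tau$, so $\tau=\iota\cdot(\iota\tau)$ acts on it as $\iota$, i.e.\ as $-1$; thus $E_\sigma$ lies in the $(\sigma\mapsto+1,\tau\mapsto-1)$ component, and symmetrically $E_\tau$ lies in the $(-1,+1)$ component and $E_{\sigma\tau}$ in the $(-1,-1)$ one, while the trivial component is $JH$. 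In particular the three elliptic curves lie in $P$ and exhaust its three isotypic pieces, whence $P\sim E_\sigma\times E_\tau\times E_{\sigma\tau}$. Since automorphisms coming from $\tC$ preserve $\Theta_{\tC}$, distinct isotypic components are orthogonal for its Hermitian form, so all cross terms vanish. Therefore $\varphi^*\Xi\equiv 4\Theta_0$, with $\Theta_0$ the principal product polarisation, which is exactly the second assertion.

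It then remains to read off the degree. Being the pullback of an ample class, $\varphi^*\Xi\equiv 4\Theta_0$ is non-degenerate, so $\varphi$ has finite kernel and is an isogeny. From $\phi_{\varphi^*\Xi}=\hat\varphi\circ\phi_\Xi\circ\varphi$ one gets $\deg\phi_{\varphi^*\Xi}=(\deg\varphi)^2\deg\phi_\Xi$; here the type $(4,4,4)$ gives $\deg\phi_{4\Theta_0}=(4^3)^2$ and the type $(1,1,4)$ gives $\deg\phi_\Xi=(1\cdot1\cdot4)^2=16$, forcing $(\deg\varphi)^2=256$ and $\deg\varphi=16$. The delicate points to make rigorous are the orthogonality step — pinning down that the missing involution acts as $-1$ on each $E_\alpha$ and invoking orthogonality of isotypic components (cf.\ \cite{rr,LR}) — and the injectivity of each $a_\alpha^*$, needed so that the diagonal restrictions are exactly $4\Theta_{E_\alpha}$. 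As a cross-check, and in the spirit of Proposition \ref{prop:nonhyp3}, one could instead compute $\ker\varphi$ directly through a tower of addition maps and obtain $\deg\varphi=16$ independently.
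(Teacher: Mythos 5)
Your proposal is correct in outline, but it reverses the logic of the paper's proof and uses different tools, so it is worth comparing. The paper quotes \cite[Theorem 5.5]{bo} for the facts that $\varphi$ is an isogeny and that $\Xi=\Theta_{J\tC}|_P$ has type $(1,1,4)$, then computes $\Ker\varphi$ \emph{explicitly}: writing $E_{\sigma}=\Fix(\sigma,\iota\tau)$, $E_{\tau}=\Fix(\tau,\iota\sigma)$, $E_{\sigma\tau}=\Fix(\iota\tau,\iota\sigma)$ inside $J\tC$ and using that $\iota$ acts as $-1$, it shows $\Ker\varphi=\{(a,b,-a-b)\,\mid\,a\in E_\sigma[2],\,b\in E_\tau[2]\}$, hence $\deg\varphi=16$; the polarisation statement then follows from degree bookkeeping in the reverse direction to yours: $\deg\lambda_{\varphi^*\Xi}=16^3=4^6$ together with $\Ker\lambda_{\varphi^*\Xi}\subset E_\sigma[4]\times E_\tau[4]\times E_{\sigma\tau}[4]$ pins the type down to $(4,4,4)$. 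You instead compute $\varphi^*\Xi$ first (diagonal blocks $4\Theta_{E_\alpha}$ from the degree-$4$ covering formula, cross terms zero by orthogonality of distinct $V_4$-isotypic components), and only then deduce the isogeny property from non-degeneracy and the degree from $\deg\phi_{\varphi^*\Xi}=(\deg\varphi)^2\deg\phi_\Xi$. Both arguments hinge on the same geometric input ($\iota=-1$ on $J\tC$ and the deck-group descriptions of the elliptic curves), and your two flagged gaps are genuinely fillable: injectivity of $a_\alpha^*$ holds because the inertia elements $\iota\tau,\iota\sigma\tau$ of the covering $\tC\ra E_\sigma$ generate the whole deck group $\langle\sigma,\iota\tau\rangle$ (so no nontrivial intermediate cover is \'etale over $E_\sigma$), and orthogonality of eigenspaces for a finite group with real-valued characters preserving $\Theta_{\tC}$ is standard. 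Your route buys something the paper's does not: it produces the \emph{product} polarisation on the nose, with no need to worry whether the principal polarisation recovered from the type count is the product one (a point the paper glosses over, harmless only when the $E_\alpha$ are pairwise non-isogenous or by a direct cross-term check). Conversely, the paper's route buys the explicit description \eqref{eq:kervarphi} of $\Ker\varphi$, which is not incidental: it is reused in Lemma \ref{lem:mult2} (to identify $G_P=\varphi(Z)$ and $P/G_P$) and hence in the injectivity proof of Theorem \ref{thm:nonisoprym}. So if you adopted your proof wholesale, you would still need to carry out the paper's kernel computation (or your suggested tower-of-addition-maps cross-check) later anyway.
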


\begin{proof}
The fact that $\varphi$ is an isogeny and the restriction of the principal polarisation
$\Xi:= {\Theta_{J\tC}}_{|P}$ is of type $(1,1,4)$ is proven in \cite[Theorem 5.5]{bo}.
Consider the following commutative diagram.

 \begin{equation} \label{iso-diag}
\xymatrix@R=1cm@C=1.5cm{
 E_{\sigma} \times E_{\tau} \times E_{\sigma\tau} \ar[r]^(.65){\varphi} \ar[d]_{\lambda_{\varphi^*\Xi}}  & P \ar[d]^(.4){\lambda_{\Xi}} \\
 \widehat{E}_{\sigma}  \times  \widehat{E}_{\tau} \times  \widehat{E}_{\sigma\tau} & \widehat{P} \ar[l]_(.3){\hat{\varphi}} 
 }
\end{equation}
Clearly, $\Ker \varphi \subset \Ker \lambda_{\varphi^* \Xi}$ and since $\Xi$ is of type $(1,1,4)$,
$\deg \lambda_{\Xi} =16$. In order to compute 
the degree of $\varphi$ we consider the description of the elliptic curves as fixed loci inside of the 
Jacobian of $\tC$:
$$
E_{\sigma} =  \Fix(\sigma,  \iota\tau),  \quad E_{\tau}= \Fix(\tau,  \iota\sigma)  \quad E_{\sigma\tau}=\Fix(\iota\tau, \iota\sigma).
$$ 
Let $(a,b,c) \in \Ker \varphi $, then $c=-a-b$. Applying $\iota\sigma$  and  $\iota\tau$ to $ -a-b$ we get 
$$
\iota\sigma (-a-b) = -\iota a -b  = -a-b \quad \textnormal{ and } \quad \iota\tau (-a-b) = -a - \iota b  = -a-b,
$$
so $\iota a=a$ and $\iota b= b$,  that is, $a,b$ and $c$ are 2-torsion points in their respective elliptic curves. This implies
\begin{equation}\label{eq:kervarphi}
    \Ker \varphi = \{ (a,b, -a-b) \ \mid \  a \in E_{\sigma}[2], \ b \in E_{\tau}[2] \},
\end{equation}
in particular that $|\Ker \varphi | =16$. Since $\deg \hat \varphi =\deg \varphi =16$,  we have $\deg 
\lambda_{\varphi^*\Xi} = 16^3= 4^6 $. This together with the fact $\Ker  \lambda_{\varphi^*\Xi} 
\subset  E_{\sigma}[4] \times E_{\tau}[4] \times E_{\sigma\tau}[4]$, 
implies that $ \lambda_{\varphi^*\Xi} $  is four times the principal polarisation. 

One can also argue that since the elliptic curves are quotients by subgroups of order 4, the respective coverings have to be $4:1$ and hence the restricted polarisations have to be of type $(4)$. Then $\varphi$, as an addition map, becomes a polarised isogeny. 
\end{proof}

Denote by $Z$ the set of 2-torsion points on $E_\s\times E_\t\times E_{\s\t}$. Let $G_P=\varphi(Z) \subset P[2]$. Consider the quotient map $\pi_P:P\lra P/G_P$.
 \begin{lem}\label{lem:mult2}
 There exists an isomorphism $P/G_P=E_\s\times E_\t\times E_{\s\t}$, such that the
 composition $\pi_P\circ\varphi: E_\s\times E_\t\times E_{\s\t}\lra P/G_P$ becomes the multiplication by $2$. In particular, $G_P\simeq \ZZ_2\times\ZZ_2$ is the set of $2$-torsion points that lie in the kernel of  $\lambda_{\Xi}: P \ra \widehat P$.
 \end{lem}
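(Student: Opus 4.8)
The plan is to compare $\pi_P\circ\varphi$ with multiplication by $2$ on $F:=E_\s\times E_\t\times E_{\s\t}$, and then to realise $\Xi$ as a pullback of the principal polarisation of $F$ along $\pi_P$. First I would collect the relevant cardinalities. By \eqref{eq:kervarphi} the kernel $K:=\Ker\varphi$ is contained in $F[2]=Z$ and, by Lemma \ref{lem:psi}, has order $16$; hence
$$|G_P|=|\varphi(Z)|=|Z|/|K|=2^6/2^4=4.$$
Since $G_P=\varphi(Z)\subseteq P[2]$ consists of points of order dividing $2$, this already shows $G_P\simeq\ZZ_2\times\ZZ_2$. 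Next I would identify the kernel of $q:=\pi_P\circ\varphi\colon F\to P/G_P$: because $\varphi^{-1}(G_P)=\varphi^{-1}(\varphi(Z))=Z+K=Z$ (here the inclusion $K\subset Z$ is essential), the isogeny $q$ is surjective with $\Ker q=Z=F[2]$.

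Both $[2]_F\colon F\to F$ and $q\colon F\to P/G_P$ are therefore isogenies with the same kernel $F[2]$. By the universal property of the quotient by a finite subgroup there is a unique isomorphism $h\colon F\to P/G_P$ with $h\circ[2]_F=q$. Identifying $P/G_P$ with $F=E_\s\times E_\t\times E_{\s\t}$ by means of $h$, the composition $\pi_P\circ\varphi=q$ becomes exactly multiplication by $2$, which is the first assertion.

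For the characterisation of $G_P$ inside $\Ker\lambda_\Xi$ I would descend the principal polarisation. Let $\Theta_0$ denote the principal product polarisation on $F$. Using $\pi_P\circ\varphi=[2]_F$, the relation $[2]_F^*\Theta_0\equiv 4\Theta_0$ (valid since $[n]^*$ acts as $n^2$ on the N\'eron--Severi group), and $\varphi^*\Xi\equiv 4\Theta_0$ from Lemma \ref{lem:psi}, one obtains $\varphi^*(\pi_P^*\Theta_0)\equiv\varphi^*\Xi$. As $\varphi$ is an isogeny, $\varphi^*$ is injective on N\'eron--Severi groups, whence $\pi_P^*\Theta_0\equiv\Xi$ and consequently
$$\lambda_\Xi=\lambda_{\pi_P^*\Theta_0}=\widehat{\pi_P}\circ\lambda_{\Theta_0}\circ\pi_P.$$
This homomorphism visibly annihilates $\Ker\pi_P=G_P$, so $G_P\subseteq\Ker\lambda_\Xi$. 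Finally, since $\Xi$ is of type $(1,1,4)$ we have $\Ker\lambda_\Xi\simeq\ZZ_4\times\ZZ_4$, whose $2$-torsion subgroup has order $4$; as $G_P\subseteq\Ker\lambda_\Xi\cap P[2]$ and both sides have order $4$, they coincide.

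The step I expect to be the main obstacle is the inclusion $G_P\subseteq\Ker\lambda_\Xi$. Feeding a point of $Z$ into the identity $\widehat\varphi\circ\lambda_\Xi\circ\varphi=\lambda_{\varphi^*\Xi}$ coming from the diagram in Lemma \ref{lem:psi} only gives $\lambda_\Xi(G_P)\subseteq\Ker\widehat\varphi$, which is too weak to conclude. The point of the argument above is precisely to use the first part of the lemma to rewrite $\Xi$ (up to algebraic equivalence) as $\pi_P^*\Theta_0$, for which vanishing on $\Ker\pi_P=G_P$ is immediate.
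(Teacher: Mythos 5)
Your proof is correct and follows essentially the same route as the paper: both compute $|G_P|=4$ from $|\Ker\varphi|=16$ and $|Z|=64$, identify $P/G_P$ with $E_\s\times E_\t\times E_{\s\t}$ via the isomorphism theorem applied to $\ker(\pi_P\circ\varphi)=Z$, and then deduce $G_P\subset\Ker\lambda_\Xi\simeq\ZZ_4\times\ZZ_4$ from the fact that $\pi_P$ is a polarised isogeny onto the principally polarised product, concluding by counting $2$-torsion. The only difference is one of detail: where the paper asserts that all maps in the diagram are polarised isogenies ``by construction,'' you explicitly derive $\pi_P^*\Theta_0\equiv\Xi$ via injectivity of $\varphi^*$ on N\'eron--Severi groups, which fills in the step the paper leaves implicit.
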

\begin{proof}
Since $|Z|=64$ and $|\ker\varphi|=16$ are 2-groups, we get that $G_P\simeq \ZZ_2\times\ZZ_2$.
Observe that $\ker(\pi_P\circ\varphi)=Z$. If $m_2$ denotes the multiplication by $2$ in $E_\s\times E_\t\times E_{\s\t}$ then the isomorphism theorem gives the following commutative diagram. In subscripts we write the polarisation types.
\begin{equation} \label{diag6}
\xymatrix@R=.7cm@C=.7cm{
& (E_\s\times E_\t\times E_{\s\t})_{(4,4,4)} \ar[dd]_{m_2} \ar[dr]^{\varphi} & \\
& & P_{(1,1,4)}\ar[dl]^{\pi_P}\\
& (E_\s\times E_\t\times E_{\s\t} )_{(1,1,1)}&
}
\end{equation}
Note that by construction all maps are polarised isogenies.
In particular, the kernel of $\pi_P$ is  a subgroup of 
$\ker \lambda_{\Xi}$ and hence $G_P \subset \ker\lambda_\Xi \simeq \ZZ_4\times\ZZ_4$.
\end{proof}

 Denote by $\cR_2^{ni}$ the moduli space parametrising the triples $(H, \eta, \xi)$  with 
 $ \langle \eta, \xi \rangle \simeq V_4 $ a non-isotropic subgroup and 
 $$
 \cB^{ni} :=\cB^{(1,1,4)}=\{(P, \Xi) \in \cA_3^{(1,1,4)} \ \mid \ \ZZ_2^3 \textnormal{ acts on }  P \textnormal{ compatible with }\Xi\}.
 $$
 We define the Prym map for non-isotropic Klein covers as
 $$
 {\Pr}^{ni}:  \cR_2^{ni} \ra \cB^{ni},  \qquad (H, \eta, \xi) \mapsto (P, \Xi).
 $$
  
Observe that the dimension of both moduli space is 3 (cf. Lemma\eqref{lem:cB}).

 \begin{thm}\label{thm:nonisoprym}
 The Prym map ${\Pr}^{ni}:  \cR_2^{ni} \ra \cB^{ni}$ is injective.
 \end{thm}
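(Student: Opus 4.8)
The plan is to reconstruct the triple $(H,\eta,\xi)$ from the polarised abelian threefold $(P,\Xi)$, and then to invoke Theorem \ref{non-iso-thm}. By that theorem it is equivalent to recover the configuration of $6$ points on $\PP^1$ together with a distinguished triple, up to projective equivalence. Normalising the distinguished triple to $\{0,1,\infty\}$ and using Remark \ref{rem:startingfrompoints}, this amounts to recovering the three elliptic curves $E_\s,E_\t,E_{\s\t}$ — branched over $\{0,1,\infty,b_1\}$, $\{0,1,\infty,b_2\}$, $\{0,1,\infty,b_3\}$ respectively — \emph{together} with the information of which three branch points they share. I would package this last piece of data as the gluing of their $2$-torsion groups.

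First I would recover the three elliptic curves intrinsically. By Lemma \ref{lem:mult2} the subgroup $G_P$ of $2$-torsion points lying in $\ker\lambda_\Xi$ is canonically attached to $(P,\Xi)$, and the quotient $P/G_P$ is a \emph{principally} polarised abelian threefold isomorphic to $E_\s\times E_\t\times E_{\s\t}$ with the product polarisation. Since a principally polarised abelian variety decomposes uniquely, up to reordering, into indecomposable factors, and elliptic curves are indecomposable, the unordered triple $\{E_\s,E_\t,E_{\s\t}\}$ — equivalently the three $j$-invariants — is determined by $(P,\Xi)$.

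The three $j$-invariants alone do not suffice: realising each $E_\alpha$ as a double cover of $\PP^1$ along four points three of which are fixed at $0,1,\infty$ leaves an $S_3$-ambiguity in the position of each $b_i$, so a priori one recovers the configuration only up to a finite ambiguity. To eliminate it I would exploit the isogeny $\varphi$ of Lemma \ref{lem:psi}. Since $\pi_P\circ\varphi$ is multiplication by $2$ (Lemma \ref{lem:mult2}), one checks that $\varphi\circ\pi_P$ is multiplication by $2$ on $P$ as well: both compose with $\pi_P$ on the left to give $\pi_P\circ m_2$, so their difference is a homomorphism from the connected variety $P$ into the finite group $G_P$, hence zero. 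Thus $\varphi$ is the \emph{unique} lift of $m_2$ through $\pi_P$, and consequently
\[
\ker\varphi=\pi_P\big(P[2]\big)\subset (E_\s\times E_\t\times E_{\s\t})[2]
\]
is itself intrinsic to $(P,\Xi)$. By \eqref{eq:kervarphi} this subgroup equals $\{(a,b,-a-b)\}$, which encodes specific isomorphisms between the $2$-torsion groups $E_\s[2]$, $E_\t[2]$, $E_{\s\t}[2]$. Under the canonical bijection of $E_\alpha[2]\setminus\{0\}$ with the three partitions of its four branch points into pairs — equivalently, with the choice of which of $\{0,1,\infty\}$ is paired with $b_i$ — choosing a realisation of $E_\alpha$ over $\PP^1$ sharing $0,1,\infty$ is the same as choosing an identification $E_\alpha[2]\setminus\{0\}\cong\{0,1,\infty\}$. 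Requiring these identifications to be compatible with the gluing isomorphisms above forces the three per-factor labellings to agree, so the $S_3^3$ of independent choices collapses to the single diagonal $S_3$, which is exactly the residual projective equivalence.

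Putting these together, $(P,\Xi)$ determines the $6$ points with their distinguished triple up to projective equivalence, hence by Theorem \ref{non-iso-thm} the triple $(H,\eta,\xi)$, which proves that ${\Pr}^{ni}$ is injective. I expect the main obstacle to lie in the third step: verifying cleanly that $\ker\varphi$ is recoverable from the polarised structure, and, more delicately, translating the resulting gluing of $2$-torsion into the combinatorics of the shared branch points so that the finite ambiguity indeed collapses precisely to projective equivalence.
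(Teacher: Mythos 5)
Your proposal is correct and follows essentially the same route as the paper's proof: both recover $G_P$ as the $2$-torsion inside $\ker\lambda_\Xi$, pass to the principally polarised quotient $P/G_P\cong E_\s\times E_\t\times E_{\s\t}$ (Lemma \ref{lem:mult2}), use the resulting common marking of the three $2$-torsion groups to pin down the shared branch points $\{0,1,\infty\}$ together with the $b_i$'s, observe that the residual ambiguity is exactly projective equivalence respecting the triple, and conclude via Theorem \ref{non-iso-thm}. The only real difference is one of packaging: the paper reads off the gluing from the preimage curves $E_j'=\pi_P^{-1}(E_j)$, whose $2$-torsion groups all literally equal $G_P$, whereas you extract it downstairs from $\ker\varphi=\pi_P(P[2])$ via your (correct) observation that $\varphi$ is the unique lift of multiplication by $2$ through $\pi_P$.
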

 \begin{proof}
 We show the statement by explicitly constructing the inverse map.
 
 Let $(P,\Xi)$ be a $(1,1,4)$ polarised abelian threefold that is in the image of the Prym map.
 Note that $G_{P}$ is uniquely defined as the set of 2-torsion points that are in $\ker \lambda_{\Xi}$ and it is of cardinality $4$. Consider the quotient map $\pi$ to a principally polarised threefold $P/G_{P}$. Since $P$ is in the image of the Prym map, by Lemma \ref{lem:mult2}, $P/G_{P}=E_\s\times E_\t\times E_{\s\t}$ with principal product polarisation so, up to permutation, the elliptic curves are uniquely defined.
 For $j\in\{\s,\t,\s\t\}$, let $E_j'$ be the preimage of $E_j$ via $\pi_{P}$. Note that $E_j'\cong E_j$ with $\pi|_{E_j'}$ being multiplication by two. 

Denote the elements of $G_{P}$ by $0,G_0,G_1,G_\infty$. Note that for each $j$ we have that $\ker \pi = G_{P} \subset E_j'$, so $G_{P}=E_j'[2]$. Hence, there exists a unique map $E_j'\lra\PP^1$, such that $G_0,G_1,G_\infty$ are preimages of the branch points $0,1,\infty$ respectively and $0$ is the ramification point of some $b_j\in\PP^1\setminus\{0,1,\infty\}$.

By Figure \ref{fig:points-non-isotropic}, the points $b_j$ are pairwise different from each other, so we have constructed the set of six points in $\PP^1$ with a distinguished triple. Note that renaming the points of $G_{P}$ gives precisely the projective equivalence that respect the triple.

By Theorem \ref{non-iso-thm}, Lemmas \ref{lem:psi} and \ref{lem:mult2}, and Remark \ref{rem:startingfrompoints}, it is obvious that the constructed map is the inverse of the Prym map.
 \end{proof}

Now, we would like to understand the Prym map in terms of abelian varieties. Such characterisation will be needed in Section \ref{sec:pryms}.
Let us introduce the space $\cA_1[2]^{(3)}$ of non-ordered triples 
$$
((E_1, \lambda_1), (E_2, \lambda_2), (E_3, \lambda_3)),
$$
where $E_i$ is an elliptic curve and $\lambda_i: E_i[2] \stackrel{\sim}{\ra} V_4$ is an isomorphism
onto the Klein group $V_4$, called a level $2$-structure, see \cite[Section 8.3.1]{bl}.  Two pairs 
$(E_1, \lambda_1)$ and $(E_2, \lambda_2)$ are isomorphic if there exists
an isomorphism $E_1 \stackrel{\rho}{\lra} E_2$ such that $\lambda_1 = \lambda_2 \circ \rho|_{E_1[2]}$.
Let
$$
\cD:=\{ (E_1, \lambda_1), (E_2, \lambda_2), (E_3, \lambda_3) \ \mid \ (E_i, \lambda_i) \simeq (E_j,
\lambda_j), \ i \neq j\}
$$
and $\cU:= \cA_1[2]^{(3)} \setminus \cD$.

\begin{prop}\label{prop:PhiPsi}
There exist maps  $\Phi$ and $\Psi$ both of degree 6, such that the following diagram commutes
 \begin{equation} \label{non-iso-diag}
\xymatrix@R=1cm@C=1.5cm{
& \cU  \ar[dr]^{\Phi} \ar[dl]_{\Psi}& \\
 \cR_2^{ni} \ar[rr]^{{\Pr}^{ni}}& & \cB^{ni} 
 }
\end{equation}
 
\end{prop}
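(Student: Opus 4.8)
The plan is to construct $\Psi$ and $\Phi$ explicitly and then read off their degrees from the injectivity of ${\Pr}^{ni}$ proved in Theorem \ref{thm:nonisoprym}. Since the diagram is required to commute and ${\Pr}^{ni}$ is injective onto its image, one has $\Phi={\Pr}^{ni}\circ\Psi$ and $\deg\Phi=\deg{\Pr}^{ni}\cdot\deg\Psi=\deg\Psi$; so it suffices to build $\Psi$, to identify the composite ${\Pr}^{ni}\circ\Psi$ with an explicit abelian threefold lying in $\cB^{ni}$ (this is what $\Phi$ will be), and to compute the single number $\deg\Psi$.

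For $\Psi$ I would use each level structure to rigidify the elliptic quotient. Given $((E_1,\lambda_1),(E_2,\lambda_2),(E_3,\lambda_3))\in\cU$ and writing $V_4=\{0,G_0,G_1,G_\infty\}$, there is a unique isomorphism $E_j/(\pm1)\cong\PP^1$ carrying the images of $\lambda_j^{-1}(G_0),\lambda_j^{-1}(G_1),\lambda_j^{-1}(G_\infty)$ to $0,1,\infty$; the origin of $E_j$ is then carried to a well-defined fourth branch point $b_j\in\PP^1\setminus\{0,1,\infty\}$. Because the triple lies in $\cU$, the pairs $(E_j,\lambda_j)$ are pairwise non-isomorphic, so the $b_j$ are pairwise distinct and $\{0,1,\infty,b_1,b_2,b_3\}$ is a set of six distinct points with the distinguished triple $\{0,1,\infty\}$. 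By the equivalence $(5)\Leftrightarrow(2)$ of Theorem \ref{non-iso-thm} this datum determines a unique $(H,\eta,\xi)\in\cR_2^{ni}$, and I set $\Psi((E_j,\lambda_j)_j)=(H,\eta,\xi)$. This is exactly the reconstruction performed in the proof of Theorem \ref{thm:nonisoprym}.

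For $\Phi$ and commutativity I would attach to the same triple the abelian threefold
$$
P:=(E_1\times E_2\times E_3)/K,\qquad K:=\{(a,b,c)\in E_1[2]\times E_2[2]\times E_3[2]\ :\ \lambda_1(a)+\lambda_2(b)+\lambda_3(c)=0\},
$$
a subgroup of order $16$, endowed with the $(1,1,4)$-polarisation $\Xi$ descending from four times the product principal polarisation together with the induced $\ZZ_2^3$-action, so that $(P,\Xi)\in\cB^{ni}$. To check the diagram commutes I would invoke Remark \ref{rem:startingfrompoints}: the three elliptic quotients $E_{\sigma},E_{\tau},E_{\sigma\tau}$ of the Klein covering $\Psi((E_j,\lambda_j)_j)$ are exactly the curves branched at $0,1,\infty,b_j$, namely $E_1,E_2,E_3$. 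By Lemma \ref{lem:psi} the Prym variety equals $(E_{\sigma}\times E_{\tau}\times E_{\sigma\tau})/\Ker\varphi$ with $\Ker\varphi=\{(a,b,-a-b)\}$, which under the level identifications is precisely $K$, and the restricted polarisation is of type $(1,1,4)$. Hence ${\Pr}^{ni}(\Psi(\cdots))=(P,\Xi)=\Phi(\cdots)$, and the triangle commutes.

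It remains to compute $\deg\Psi$, and this is the step I expect to be the main obstacle. I would exhibit the generic fibre as a single orbit of the diagonal action of $\Aut(V_4)\cong S_3$ on level structures, $\pi\cdot(E_j,\lambda_j)_j=(E_j,\pi\circ\lambda_j)_j$. Applying a common $\pi$ replaces each map $E_j\to\PP^1$ by its composite with the \emph{single} Möbius transformation $m_\pi$ permuting $0,1,\infty$ according to $\pi$, so the six points become $m_\pi(\{0,1,\infty,b_1,b_2,b_3\})$, a configuration projectively equivalent to the original and respecting the triple; thus $\Psi$ is constant along $S_3$-orbits. Conversely, two triples with the same image yield projectively equivalent six-point configurations, and any projective equivalence respecting the triple permutes $\{0,1,\infty\}$, hence is some $m_\pi$, so the fibre is exactly one $S_3$-orbit. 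The delicate point is that the orbit has exactly six elements: for this one must know the $S_3$-action is generically free, which holds because for generic $E_j$ one has $\Aut(E_j)=\{\pm1\}$ acting trivially on $E_j[2]$, forcing the stabiliser of $\pi$ to be trivial. This is precisely where removing $\cD$ is essential, since it keeps the three marked curves distinct, prevents unexpected stabilisers, and keeps the $b_j$ from colliding (which would make $H$ singular). Granting this, $\deg\Psi=6$ and therefore $\deg\Phi=6$, while the verification that the descended $(P,\Xi)$ genuinely carries the required $(1,1,4)$-polarisation and $\ZZ_2^3$-action is a secondary matter settled by Lemmas \ref{lem:psi} and \ref{lem:mult2}.
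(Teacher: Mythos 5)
Your proposal is correct and takes essentially the same route as the paper: $\Psi$ is built from the cross-ratios $b_i$ determined by the level structures, $\Phi$ is the quotient $(E_1\times E_2\times E_3)/K$ with the descended $(1,1,4)$-polarisation, and the degree $6$ comes from the diagonal $\Aut(V_4)$-action on triples. The only (welcome) differences are refinements of detail: you deduce $\deg\Phi=\deg\Psi$ from the injectivity of ${\Pr}^{ni}$ (legitimate, since Theorem \ref{thm:nonisoprym} precedes the proposition), whereas the paper gets it by noting the action preserves $K$, and you spell out the generic freeness of the action and that fibres are exactly orbits, points the paper leaves implicit.
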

\begin{proof}
Note that for every couple $(E_i,\lambda_i) \in \cA_1[2]$ one has a well 
 defined map $E_i \ra \PP^1$ ramified at the 2-torsion points
 with $$
 \lambda_i^{-1}(0,1) \mapsto 0, \quad \lambda_i^{-1}(1,1) \mapsto 1
\quad \lambda_i^{-1}(1,0) \mapsto \infty,
 \quad
 \lambda_i^{-1}(0,0) \mapsto b_i.$$
In this way, the cross-ratio $b_i$ is uniquely determined by $(E_i,\lambda_i)$ and the construction gives six points $0,1,\infty,b_1,b_2,b_3\in\PP^1$ with a distinguished triple. Hence, we have constructed a map $\Psi:\mathcal{U}\lra \mathcal{R}_2^{ni}$.

 In order to define $\Phi$, let 
 $$
 K:=\{ (x,y, z) \in E_1[2] \times E_2[2] \times E_3[2] \ \mid \  \lambda_1(x) + \lambda_2(y) + \lambda_3(z) =(0,0) \}
 $$  
 be a subgroup  and  $\pi_K: E_1 \times E_2 \times E_3 \ra  E_1 \times E_2 \times E_3 /K$ the canonical projection.
 Note that $K$ is the kernel of a surjective group homomorphism  $E_1[2] \times E_2[2] \times E_3[2] \ra V_4$,
 so it has cardinality 16. Moreover, $K$ being contained in the set of $2$-torsion points is isotropic with
 respect to the four times the product polarisation.
 We define 
 $$
 \Phi ((E_1, \lambda_1), (E_2, \lambda_2), (E_3, \lambda_3))  = ((E_1 \times E_2 \times E_3) / K, 
 \Xi)
 $$
 where the polarisation $\Xi$ is of type $(1,1,4)$ such that $\varphi^*(\Xi)$ is four times the principal product
polarisation on $E_1 \times E_2 \times E_3$.

Observe that the group of automorphisms
$f_{v}:V_4\ra V_4$, acts on the triples $\{\lambda_i\}_{i=1,2,3}$ by $\{f_{v}\circ\lambda_i\}_{i=1,2,3}$.

This action induces projective equivalences of $\PP^1$ respecting the distinguished triple and also it clearly preserves $K$. Therefore, $\deg \Psi=\deg \Phi = |\Aut(V_4)|= 6$.
The commutativity of Diagram \ref{non-iso-diag} is a straightforward computation.
\end{proof}

\section{Klein coverings: Isotropic case}\label{sec:iso}

Unlike the non-isotropic case, the lift of the hyperelliptic involution to an isotropic Klein 
covering $\tC \ra H$ is not hyperelliptic. This means that we have a few more quotient curves that are not
rational. We will use them to show that the Prym map is injective.

Let $H$ be a genus 2 curve and $\tC \ra H$ be an \'etale isotropic Klein covering of $H$. 
Let $\sigma$ and $\tau$ be the fixed point free involutions interchanging the sheets of the covering.
Consider the genus 3 curves $C_{\sigma}=\tC/\s,C_{\tau}=\tC/\t, C_{\sigma\tau}=\tC/\s\t$. 
Since the hyperelliptic involution  $\iota$ on $H$  lifts to involutions on these curves,
according to Lemma \ref{lem:hypgen3},  $C_{\sigma},C_{\tau},C_{\sigma\tau}$ are all hyperelliptic.

%The lifts of $\iota$ to $\tC$ are called $j,j\t,j\s,j\s\t$.
\begin{prop}\label{prop:involutionj}
Denote the lifts of $\iota$ to $\tC$ to be $j,j\t,j\s,j\s\t$. We can choose $j$ to be fixed point free and the
other involutions to be elliptic, i.e. $g(\tC/j\t)=g(\tC/j\s)=g(\tC/j\s\t)=1$.
\end{prop}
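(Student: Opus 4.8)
The plan is to pin down the four lifts $j,\ j\s,\ j\t,\ j\s\t$ of $\iota$ by counting their fixed points, using the three intermediate hyperelliptic genus $3$ curves $C_\s,C_\t,C_{\s\t}$ simultaneously, and then to discard the one numerical profile that would force $\tC$ to be hyperelliptic.

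First I would check that the four lifts are commuting involutions. Because $\iota$ acts as $-1$ on $JH$, it acts trivially on $JH[2]$ and in particular on the group $\langle \s,\t\rangle$ defining the covering; hence every lift of $\iota$ commutes with $\s$ and $\t$. That $j^2=\id$ follows by pushing $j$ down to each $C_\alpha$, where its image is a genuine involution (Corollary \ref{cor:liftinv}, the cover $C_\alpha\ra H$ being \'etale), and intersecting the resulting memberships to get $j^2\in\langle\s\rangle\cap\langle\t\rangle\cap\langle\s\t\rangle=\{\id\}$. Thus $\langle\s,\t,j\rangle\cong\ZZ_2^3$ and $j\s,j\t,j\s\t$ are involutions as well.

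Next I would determine the admissible fixed-point numbers. On the genus $5$ curve $\tC$ the Hurwitz formula gives $|\Fix(\alpha)|=12-4\,g(\tC/\alpha)\in\{0,4,8,12\}$ for every involution $\alpha$. Each $C_\alpha$ is hyperelliptic of genus $3$ (Lemma \ref{lem:hypgen3}), and by Lemma \ref{fpf-inv} with $k=1$ --- the deck involution of the \'etale cover $C_\alpha\ra H$ being fixed-point free --- exactly one of the two lifts of $\iota$ to $C_\alpha$ is the hyperelliptic involution, with $8$ fixed points, while the other has $4$. Applying Lemma \ref{lem:lift} to the three \'etale double covers $\tC\ra C_\s$, $\tC\ra C_\t$, $\tC\ra C_{\s\t}$ (with deck involutions $\s,\t,\s\t$), each cover splits the four lifts into two pairs that descend to the two distinguished lifts of $\iota$ on its base, so the two corresponding sums of fixed-point numbers equal $16$ and $8$ in some order. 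A direct inspection of the $\{0,4,8,12\}$-valued solutions of these three constraints shows that the quadruple $(|\Fix(j)|,|\Fix(j\s)|,|\Fix(j\t)|,|\Fix(j\s\t)|)$ must be either $(0,8,8,8)$ or $(12,4,4,4)$, up to permutation.

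It remains to exclude the profile $(12,4,4,4)$, which is the heart of the matter. An entry equal to $12$ means a lift of $\iota$ has rational quotient, i.e.\ is a hyperelliptic involution of $\tC$; conversely a hyperelliptic involution of $\tC$ is central, descends to the hyperelliptic involution of $H$, and so is one of the four lifts. Hence this profile occurs precisely when $\tC$ is hyperelliptic. But a hyperelliptic $\tC$ of genus $5$ carrying $\ZZ_2^3=\langle\s,\t,j\rangle$ is exactly condition $(3)$ of Theorem \ref{non-iso-thm}, whose equivalence with $(2)$ forces $\langle\s,\t\rangle$ to be \emph{non-isotropic}, contradicting our standing hypothesis. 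Therefore the profile is $(0,8,8,8)$: taking $j$ to be the unique fixed-point free lift, the remaining lifts $j\s,j\t,j\s\t$ each have $8$ fixed points, hence elliptic quotients $g(\tC/j\s)=g(\tC/j\t)=g(\tC/j\s\t)=1$, as claimed. The main obstacle is precisely this exclusion; everything before it is forced by Hurwitz and Lemma \ref{lem:lift}. Should a self-contained argument be preferred over the appeal to Theorem \ref{non-iso-thm}, one could instead trace the $8$ Weierstrass points of $C_\s$ back to the $6$ Weierstrass points of $H$ and apply Proposition \ref{farkas-lemma} to $\tC\ra C_\s$, verifying that hyperellipticity translates exactly into the two pairs of Weierstrass points defining $\langle\s,\t\rangle$ sharing a point, i.e.\ into non-isotropy.
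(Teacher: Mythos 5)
Your proof is correct and takes essentially the same route as the paper: both arguments pin down the fixed-point numbers of the four lifts via Lemma \ref{lem:lift} and Lemma \ref{fpf-inv} applied to the hyperelliptic quotients $C_\s, C_\t, C_{\s\t}$, and both hinge on the same key fact that an isotropic Klein covering cannot have hyperelliptic total space $\tC$ --- the paper invokes this upfront to bound $|\Fix(\alpha)|<12$ for every involution and then computes $|\Fix(j\s)|=|\Fix(j\t)|=|\Fix(j\s\t)|=8$ directly, while you enumerate the two admissible profiles $(0,8,8,8)$ and $(12,4,4,4)$ and exclude the latter by citing Theorem \ref{non-iso-thm}. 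The remaining differences (your explicit check that the lifts generate a $\ZZ_2^3$, and the enumerate-then-exclude bookkeeping) are organizational rather than mathematical.
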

\begin{proof}
Let $i$ be the hyperelliptic involution on $C_{\t}$. Since the lifts of $i$ are either $j,j\t$ or $j\s,j\s\t$ without loss of generality we denote them by $j\s$ and $j\s\t$. Since $|\fix(i)|=8$, we have that $|\fix(j\s)|+|\fix(j\s\t)|=16$.
Since the covering is isotropic and hence the curve $\tC$ is not hyperelliptic, we get that for any involution $\alpha$, $|\fix(\alpha)|<12$.
Using Lemma \ref{lem:lift} for $j\s$ and $j\s\t$, we get that $4<|\fix(j\s)|<12$ is divisible by 4 and therefore $|\fix(j\s)|=|\fix(j\s\t)|=8$.

Now, consider the lifts of the hyperelliptic involution $\iota$ of $C_{\s}$. By construction, again the lifts of $\iota$ are
either a pair $j,j\s$ or $j\t,j\s\t$. Since $j\s$ and $j\s\t$ are indistinguishable, without loss of generality,
we denote the lifts from $C_{\s}$ to be $j\t,j\s\t$. Repeating the argument above, we get that
$|\fix(j\s)|=|\fix(j\s\t)|=|\fix(j\t)|=8$. 

Since the fixed point sets are mutually disjoint (as different lifts of $\iota$) and $|f^{-1}(\fix(\iota))|=4\cdot 6=24$ we get that the fourth lift, denoted by $j$, is fixed point free.
\end{proof}

\begin{rem}
In the non-isotropic case, we have lifted the hyperelliptic involution from $H$ to three hyperelliptic
involutions on $C_\s,C_\t,C_{\s\t}$ and the hyperelliptic involution on $\tC$ is the unique simultaneous lift 
of these three involutions.
On the other hand, in the isotropic case one has to choose the lifts of the hyperelliptic involution from $H$ to 
be three elliptic involutions on $C_\s,C_\t,C_{\s\t}$. Then the involution $j$ is the unique simultaneous lift of
these three involutions.
\end{rem}

\begin{defn}
We define the genus 3 curve $C_j=\tC/j$. One observes that the three involutions $\s,\t,\s\t$ on $\tC$ descend
to $C_j$ since they all commute.
\end{defn}
Define  the elliptic curves 
$$E_\s=\tC/\langle j ,\s\rangle,\quad  E_\t=\tC/\langle j ,\t \rangle ,\quad E_{\s\t}=\tC/\langle j,\s\t \rangle,$$ 
which are also respectively, quotients of $C_\s,\ C_\t,\ C_{\s\t}$  by elliptic involutions. In particular, 
they are quotients of $\tC$ by four elements subgroups of automorphisms and they fit in the following extended
commutative diagram:
\begin{equation} \label{diag1}
\xymatrix@R=1.1cm@C=.9cm{
& \tC \ar[d]_{\text{\'et}} \ar[dr]_{\text{\'et}} \ar[dl]_{\text{\'et}} \ar[drr]^{\text{\'et}}&\\
C_{\sigma} \ar[d] \ar[dr] & C_{\tau}  \ar[dl] \ar[dr]   & C_{\sigma\tau}   \ar[dll]\ar[dr] &\mathbf{C_j} \ar[d] \ar[dl] \ar[dll]\\
H \ar[dr]& E_\sigma  \ar[d]&E_\tau \ar[dl]&E_{\s\t} \ar[dll]\\
\ \ \  \ \  \ \ \ \ \ \  \ \  \ \ \ \ \ &\PP^1&&
}
\end{equation}

\begin{prop}\label{prop:samedivisor}
The curves $C_\s$ and $C_j$ are double coverings of $E_\s$ branched along the same divisor.
Moreover, the curve $C_j$ is non-hyperelliptic. 
\end{prop}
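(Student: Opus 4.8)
The plan is to analyze the situation in the diagram~\eqref{diag1} through the lens of Lemma~\ref{lem:lift}, applied to the two different double coverings of $E_\s$ appearing as the two composite maps $C_\s \to E_\s$ and $C_j \to E_\s$. First I would fix notation: both $C_\s$ and $C_j$ are genus~$3$ hyperelliptic (resp.\ non-hyperelliptic, to be shown) curves sitting over $E_\s$, and the covering $\tC \to E_\s$ factors through each of them. The key observation is that $E_\s = \tC/\langle j,\s\rangle$, so the two intermediate quotients $C_\s = \tC/\s$ and $C_j = \tC/j$ both map $2{:}1$ onto $E_\s$, and the branch points of these maps on $E_\s$ are exactly the images of the fixed loci of the relevant lifted involutions. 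I would use Lemma~\ref{lem:lift} to identify these branch loci: the branch divisor of $C_\s \to E_\s$ is the image in $E_\s$ of $\Fix$ of the involution on $C_\s$ induced by $j$ (an elliptic involution by Proposition~\ref{prop:involutionj}), and similarly for $C_j \to E_\s$ the branch divisor is the image of $\Fix$ of the involution on $C_j$ induced by $\s$.

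The heart of the argument is to check these two branch divisors on $E_\s$ coincide. Here I would exploit the commutativity of $j$ and $\s$ on $\tC$: the fixed points upstairs that produce the branching of $C_\s \to E_\s$ and of $C_j \to E_\s$ are both governed by the fixed-point set $\Fix(j\s)$ (equivalently $\Fix(\s)\cap$-data) of the \emph{same} involution $j\s$ on $\tC$, pushed down to $E_\s$. Concretely, a point of $E_\s$ is a branch point of $C_\s \to E_\s$ precisely when its preimages in $C_\s$ are fixed by the lifted $j$-involution, and by Lemma~\ref{lem:lift} (applied to the étale map $\tC \to C_\s$, and then to $\tC \to C_j$) one can match these preimages through their common origin in $\tC$. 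The cleanest route is to count: by Proposition~\ref{prop:involutionj} we know $|\Fix(j\s)| = 8$ and $|\Fix(\s)| = 0$ (as $\s$ is fixed-point-free), and tracing the fixed points of the two elliptic involutions on $C_\s$ and $C_j$ down to $E_\s$ should yield the same four branch points of the bielliptic quotient $E_\s \to \PP^1$. I expect this matching of divisors to be the main obstacle, since it requires carefully bookkeeping which lifts land on which fixed-point sets under the tower of étale double covers.

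For the second assertion, that $C_j$ is non-hyperelliptic, I would argue by contradiction using Lemma~\ref{lem:hypgen3}: if the genus~$3$ curve $C_j$ (which carries the two commuting involutions $\s,\t$ descended from $\tC$) were hyperelliptic, then by Lemma~\ref{lem:hypgen3} it would be an étale double cover of a genus~$2$ curve, and its hyperelliptic involution would be a further lift of the hyperelliptic structure. But $C_j = \tC/j$ was constructed precisely as the quotient by the \emph{fixed-point-free} lift $j$ of $\iota$, whereas the other three lifts $j\s,j\t,j\s\t$ are elliptic (Proposition~\ref{prop:involutionj}); the hyperelliptic involution of $C_j$, were it to exist, would have to come from one of these elliptic lifts and would force $\tC$ to acquire an extra $\ZZ_2$-symmetry incompatible with $\tC$ being non-hyperelliptic (which holds in the isotropic case, as recorded in the proof of Proposition~\ref{prop:involutionj}). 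Alternatively, and perhaps more directly, since $C_j$ admits the two commuting involutions $\s,\t$ with none of $C_j,C_j/\s,C_j/\t$ of genus~$2$, Lemma~\ref{lem:nonhyp3} applies to show all three involutions on $C_j$ are bielliptic, forcing $C_j$ to be non-hyperelliptic. I would present this last route as the principal one, invoking Lemma~\ref{lem:nonhyp3} after verifying its hypotheses.
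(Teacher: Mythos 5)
Your treatment of the first assertion is sound and is essentially the paper's own argument: since $\tC\to C_\s$ and $\tC\to C_j$ are \'etale, all ramification of the $4{:}1$ map $\tC\to E_\s=\tC/\langle j,\s\rangle$ must occur in the bottom maps $C_\s\to E_\s$ and $C_j\to E_\s$, so both branch divisors equal the image of $\Fix(j\s)$ in $E_\s$. The problem is the second assertion. Your ``principal'' route is circular: Lemma \ref{lem:nonhyp3} takes as a \emph{hypothesis} that the genus $3$ curve is non-hyperelliptic, so it cannot be invoked to conclude non-hyperellipticity; ``verifying its hypotheses'' means verifying exactly what you are trying to prove. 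Worse, the claim underlying your route --- that a genus $3$ curve with two commuting involutions whose Klein-group quotients are not of genus $2$ must be non-hyperelliptic --- is false. Take $C: y^2=x^8-14x^4+1$ with $\alpha(x,y)=(-x,y)$ and $\beta(x,y)=(1/x,y/x^4)$: this is a smooth hyperelliptic genus $3$ curve, $\alpha$ and $\beta$ commute, and each of $\alpha,\beta,\alpha\beta$ has exactly $4$ fixed points, so all three quotients $C/\alpha$, $C/\beta$, $C/\alpha\beta$ are elliptic. This does not contradict Lemma \ref{lem:hypgen3}, because there the \'etale genus $2$ quotient is by the involution $\iota\alpha$, not by one of the involutions you control; to argue via Lemma \ref{lem:hypgen3} you would have to exclude \emph{every} fixed-point-free involution on $C_j$ with genus $2$ quotient, not just $\s,\t,\s\t$.

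Your fallback route (contradiction with $\tC$ being non-hyperelliptic) also has a gap. A hypothetical hyperelliptic involution $\iota'$ on $C_j$ does lift to $\tC$ (its pullback acts as $-1$ on $JC_j$, hence fixes the $2$-torsion point defining $\tC\to C_j$), but \'etale double covers of hyperelliptic genus $3$ curves need not be hyperelliptic (Proposition \ref{farkas-lemma}), so no contradiction follows automatically. Concretely, by Lemma \ref{lem:lift} the two lifts of $\iota'$ would have fixed-point counts $(4,12)$, $(8,8)$ or $(12,4)$; only a count of $12$ forces $\tC$ to be hyperelliptic, and the case $(8,8)$ is consistent with every genus and fixed-point count available (Accola applied to $\langle r,j\rangle$ gives $2\cdot 5+4\cdot 0=2(1+3+1)$), so this kind of bookkeeping cannot finish the proof. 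The paper closes the gap with a finer tool that you never invoke: it identifies the line bundle defining the double covering $C_j\to E_\s$ as $\cO_{E_\s}(w_1+w_2)$, where $w_1\neq w_2$ are $2$-torsion (Weierstrass) points of $E_\s$; since $w_1+w_2\not\sim 2w_1$, this is not the hyperelliptic bundle, and Lemma \ref{lem:doucovell4p} then yields immediately that $C_j$ is non-hyperelliptic. Some identification of the defining line bundle (equivalently, of how the covering is glued over the four branch points), and not merely of the branch divisor, is unavoidable here.
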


\begin{proof}
The first part follows from the fact that $E_\s=\tC/\langle j,\s \rangle$ is a quotient of both $C_j$ and $C_\s$ 
by respective involutions, and the fact that $\tC\ra C_\s$ and $\tC\ra C_j$ are \'etale.

One can check that the double covering 
$C_j \ra E_{\s}$ is defined by the line bundle $\cO_{E_{\tau\sigma}}(w_1 + w_2) \in
\Pic^2(E_{\s})$,
where $w_1, w_2$ are two Weierstrass points with respect to the involution $\tau$ on
$E_{\sigma}$ (like $oo$ or $zz$ in Figure \ref{fig:points}). 
 Then, by Lemma \ref{lem:doucovell4p}, $C_j$ can not be hyperelliptic.
\end{proof}

\begin{thm}\label{iso-thm}
Every of the following data are equivalent:
\begin{enumerate}
\item 3 pairs of points in $\PP^1$ up to projective equivalence (respecting the pairs);
\item a genus 2 curve $H$ with an isotropic Klein subgroup of $JH[2]$;
\item a non-hyperelliptic genus 5 curve with a Klein subgroup of fixed point free involutions;
\item a non-hyperelliptic genus 3 curve with a $\ZZ_2^2$ subgroup of automorphisms.
%\item a genus 1 curve with a chosen involution, two pairs of its fixed points and 4 points that are pairwise exchanged.
\end{enumerate}
\end{thm}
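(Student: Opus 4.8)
The plan is to establish a cyclic chain of implications $(1)\Rightarrow(2)\Rightarrow(3)\Rightarrow(4)\Rightarrow(1)$, with the six-point configuration on $\PP^1$ serving as the common hub; since each arrow is a canonical construction, checking that the round trip is the identity will give the asserted equivalence of data. For $(1)\Rightarrow(2)$ I would build the genus $2$ curve $H$ as the double cover of $\PP^1$ branched at the $6$ points, so that the three pairs become three disjoint pairs of Weierstrass points $\{w_1,w_2\},\{w_3,w_4\},\{w_5,w_6\}$. Setting $\eta_1=\cO_H(w_1-w_2)$ and $\eta_2=\cO_H(w_3-w_4)$, Corollary \ref{cor:weier} gives $\eta_1+\eta_2=\cO_H(w_5-w_6)$, so $\langle\eta_1,\eta_2\rangle$ is a Klein subgroup of $JH[2]$ whose three nonzero elements are exactly the three pairs; by Remark \ref{rem:Weil} the Weil pairing of any two of them vanishes because the underlying pairs are disjoint, so the subgroup is isotropic. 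Running this backwards—every nonzero $2$-torsion point of a genus $2$ curve is a difference of two Weierstrass points, and two such are Weil-orthogonal exactly when the pairs are disjoint—shows that isotropic Klein subgroups correspond bijectively to partitions of the six Weierstrass points into three pairs, which is precisely $(1)\Leftrightarrow(2)$.

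For $(2)\Rightarrow(3)$ I would take $\tC\ra H$ to be the \'etale Klein covering defined by the isotropic subgroup; Hurwitz's formula gives $g(\tC)=5$, and the three nontrivial deck transformations $\sigma,\tau,\sigma\tau$ are fixed-point free because the covering is \'etale. That $\tC$ is non-hyperelliptic in the isotropic case is exactly the dichotomy recorded at the start of Section \ref{sec:iso}, the hyperelliptic alternative being the non-isotropic case of Theorem \ref{non-iso-thm}. For $(3)\Rightarrow(4)$ I would invoke Proposition \ref{prop:involutionj} to single out the fixed-point free lift $j$ of the hyperelliptic involution and set $C_j=\tC/j$, a genus $3$ curve by Hurwitz; the commuting involutions $\sigma,\tau$ descend to a $\ZZ_2^2$ subgroup of $\Aut(C_j)$, and Proposition \ref{prop:samedivisor} guarantees that $C_j$ is non-hyperelliptic.

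To close the cycle with $(4)\Rightarrow(1)$ I would feed the non-hyperelliptic genus $3$ curve $C$ together with its $\ZZ_2^2$ of involutions into Lemma \ref{lem:nonhyp3}, which exhibits $C$ as a $4:1$ Klein covering of $\PP^1$ branched at $6$ points, the images of the $12$ fixed points. The key bookkeeping is that the four fixed points of each of $\sigma,\tau,\sigma\tau$ are interchanged in pairs by the other two involutions and hence map to exactly two branch points; this partitions the six branch points into three disjoint pairs, recovering the datum of $(1)$. Finally I would verify that the composite of the four constructions around the cycle is the identity, matching the branch points in diagram \eqref{diag1} and the labelling of Figure \ref{fig:points} against the three pairs produced in $(1)\Rightarrow(2)$.

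The main obstacle I anticipate is this last compatibility check: one must confirm that the six points with their pairing read off from $C_j$ via Lemma \ref{lem:nonhyp3} coincide, up to projective equivalence respecting the pairs, with the six points one started from, so that the reconstruction $\tC\rightsquigarrow C_j\rightsquigarrow\PP^1$ genuinely inverts the construction $\PP^1\rightsquigarrow H\rightsquigarrow\tC$. This amounts to tracing the fixed-point/branch-point correspondence through the whole tower \eqref{diag1}—relating the Weierstrass pairs on $H$, the branch loci of the elliptic quotients $E_\sigma,E_\tau,E_{\sigma\tau}$, and the branch points of $C_j\ra\PP^1$—rather than to any single hard estimate; the potential pitfall is a spurious relabelling of the three pairs, which is exactly what the clause ``respecting the pairs'' in $(1)$ is designed to absorb.
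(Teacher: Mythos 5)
Your proposal is correct and uses exactly the same ingredients as the paper: Corollary \ref{cor:weier} and Remark \ref{rem:Weil} for the correspondence between pairs of Weierstrass points and isotropic Klein subgroups, the dichotomy coming from Theorem \ref{non-iso-thm} for the passage to genus $5$ curves, and Proposition \ref{prop:samedivisor} together with Lemma \ref{lem:nonhyp3} for the genus $3$ curve $C_j$. The difference is purely organisational, and it is worth spelling out what it costs. The paper proves three two-way equivalences, $(1)\Leftrightarrow(2)$, $(1)\Leftrightarrow(4)$ and $(2)\Leftrightarrow(3)$; in particular it builds $C_j$ directly from the six points (as the non-hyperelliptic double cover of $E_\sigma$ branched at the preimage of a distinguished pair), rather than as the quotient $\tC/j$. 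You instead run a one-way cycle $(1)\Rightarrow(2)\Rightarrow(3)\Rightarrow(4)\Rightarrow(1)$ closed by a single roundtrip check at node $(1)$. Two points then need care. First, one roundtrip identity only yields injectivity of $(1)\Rightarrow(2)$ and surjectivity of $(4)\Rightarrow(1)$; to get the asserted equivalence of \emph{data} you need the roundtrip at every node (or the pairwise two-way arguments), and for this it helps that each of your arrows has a canonical inverse, e.g.\ $(4)\Rightarrow(3)$ is the \'etale double cover defined by the distinguished $2$-torsion point of Corollary \ref{cor:distpoint}, as the paper notes after the theorem. Second, your arrow $(3)\Rightarrow(4)$ invokes Proposition \ref{prop:involutionj}, whose hypotheses presuppose that the genus $5$ curve is an isotropic Klein covering of a genus $2$ curve; for an abstract datum of type $(3)$ this requires first establishing $(3)\Rightarrow(2)$: the quotient by the Klein group has genus $2$ by Hurwitz (the covering is \'etale since all three involutions are fixed point free), and the defining Klein subgroup must be isotropic, since otherwise Theorem \ref{non-iso-thm} would force $\tC$ to be hyperelliptic. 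Both points are repaired by arguments you essentially already have on the page — the second is precisely the paper's one-line proof of $(2)\Leftrightarrow(3)$ — so the gaps are cosmetic rather than fatal, but as literally written the cycle does not quite deliver the full equivalence.
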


\begin{proof}
$(1) \Leftrightarrow (2)$ Let $(P_1, P_2), (P_3, P_4), (P_5, P_6)\in\PP^1$ be three pairs of points in $\PP^1$.
Consider the double cover $\pi:H\lra\PP^1$ branched in these points and let
$w_1,\ldots,w_6$, with $\pi(w_i) = p_i$ be the Weierstrass points of $H$. 
By Corollary \ref{cor:weier} the elements $\{0, w_1-w_2, w_3-w_4, w_5-w_6\}\subseteq JH[2]$ form
a Klein subgroup and, by Remark \ref{rem:Weil}, this subgroup is isotropic.
Conversely, every genus 2 curve with an isotropic Klein subgroup  of $JH[2]$ arises in this way.

$(1) \Leftrightarrow (4)$ For each pair of points $(P_i, P_j)$ one can construct the elliptic curve that is a double
covering of $\PP^1$ branched in $P_k$ with $k \in \{1, \ldots, 6\} \setminus\{i,j\}$.
The preimage of the pair $(P_i, P_j)$ becomes the branching divisor of two double coverings,
giving rise to two genus 3 curves, one hyperelliptic and one non-hyperelliptic.
These coverings are illustrated in Figure \ref{fig:points}, by  $C_{\s} \ra E_{\s}$
and $C_{j} \ra E_{\s}$ respectively. By Proposition \ref{prop:samedivisor}, $C_j$ is non-hyperelliptic and it admits 
two commuting involutions, one arising from the double covering $C_j  \ra E_{\s} $ and the other as the lift of the hyperelliptic involution. Conversely,  by Lemma \ref{lem:nonhyp3},  any non-hyperelliptic genus 3 curve 
$C_j$ with a $\ZZ^2_2$ group of automorphisms  is constructed as the $4:1$ covering of $\PP^1$ with the monodromy being the Klein group, and the branching divisor are three pairs of points, which are images of the fixed points of the three involutions.

$(2) \Leftrightarrow (3)$ This equivalence follows from the 
equivalence $(2) \Leftrightarrow (3)$ in Theorem \ref{non-iso-thm}.
\end{proof}

\begin{rem}
The curve $\tC$ is defined as an isotropic Klein covering of $H$ or equivalently, as an \'etale double covering 
of $C_j$ given by the distinguished 2-torsion point $\alpha\in JC_j[2]$, see Corollary \ref{cor:distpoint}. 
\end{rem}

\begin{figure}[h]
    \centering
    \includegraphics[scale=0.8]{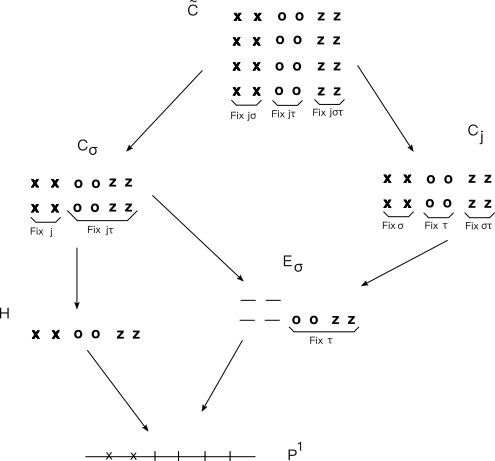}
    \caption{Ramification points 
    (isotropic case)}
    \label{fig:points}
\end{figure}

\begin{cor}
The moduli space of isotropic Klein coverings of genus 2 curves is an irreducible variety.
\end{cor}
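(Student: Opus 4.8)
The plan is to identify the moduli space of isotropic Klein coverings with the parameter space of statement (1) in Theorem \ref{iso-thm}, and then to realise that space as the quotient of an irreducible variety, exactly as in the non-isotropic corollary.

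First I would recall that, by definition, this moduli space parametrises pairs $(H, G)$ with $H$ a smooth genus 2 curve and $G \subset JH[2]$ an isotropic Klein four-subgroup, which is precisely the datum (2) of Theorem \ref{iso-thm}. Invoking the equivalence $(1) \Leftrightarrow (2)$ proved there, such a pair corresponds bijectively to an unordered triple of unordered pairs of points on $\PP^1$, taken up to projective equivalence respecting the pairing.

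Next I would describe this configuration space explicitly. Writing $(\PP^1)^{(2)} \cong \PP^2$ for the symmetric square parametrising unordered pairs of points, the space in question is
\[
\left[ \left( (\PP^1)^{(2)} \right)^{3} \setminus \{\textrm{diagonals}\} \right] \big/ \left( S_3 \times \PP GL(2) \right),
\]
where the diagonals are the loci on which two of the six points collide, the factor $S_3$ permutes the three pairs, and $\PP GL(2)$ acts diagonally. Irreducibility then follows formally: $(\PP^1)^{(2)} \cong \PP^2$ is irreducible, hence so is its threefold product; deleting the closed diagonal locus leaves a dense open, and therefore still irreducible, subvariety; and the quotient of an irreducible variety by the group $S_3 \times \PP GL(2)$, being the image under a surjective morphism, is again irreducible.

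I do not expect a genuine obstacle in this argument, since all the substance is carried by Theorem \ref{iso-thm}, which guarantees that the moduli space really is the quotient above. The only point that requires a little care is the correct bookkeeping of the symmetries one must quotient by — the unordered pairs, already absorbed into the symmetric square $(\PP^1)^{(2)}$, and the unordered triple of pairs, accounted for by $S_3$ — so that the parameter space is neither over- nor under-symmetrised.
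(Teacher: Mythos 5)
Your proof is correct and follows essentially the same route as the paper: invoke the equivalence with the configuration of three pairs of points from Theorem \ref{iso-thm}, realise that configuration space as $(\PP^1)^{(2)} \times (\PP^1)^{(2)} \times (\PP^1)^{(2)} \setminus \{\textrm{diagonals}\}$ modulo the projective group, and conclude irreducibility as the image of an irreducible variety. The only (harmless) difference is that you make the $S_3$-action permuting the pairs explicit, whereas the paper leaves it implicit; this changes nothing, since a further quotient of an irreducible variety is still irreducible.
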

\begin{proof}
The set in statement (1) can be described as the quotient of $(\PP^1)^{(2)} \times (\PP^1)^{(2)}\times
(\PP^1)^{(2)} \setminus \{ \textrm{diagonals}\}$ by the diagonal action of $\PP GL(2)$, which is an irreducible 
set.
\end{proof}

\begin{rem}
One finds a similar result to the equivalence of conditions (3) and (4) of Theorem \ref{iso-thm} in \cite[Proposition 3.1]{KMV}. 
There the authors proved a bijection between some genus 5 curves with three bielliptic
involutions and genus 3 curves with three bielliptic involutions with product 1.

\end{rem}

Let $f: \tC  \ra H $ be an isotropic Klein covering of $H$ and $h: \tC \ra C_j $ the double
\'etale map  over $C_j$.
\begin{prop} \label{iso-Prym}
The image of $h^*: JC_j \ra J\tC$ equals the Prym variety $P:=P(\tC/ H)$ as polarised abelian varieties.
In particular, $\Xi$ is of type $(1,2,2)$.
\end{prop}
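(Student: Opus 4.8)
The plan is to prove the two assertions in turn: first the equality $\im h^* = P$ of abelian subvarieties of $J\tC$, after which the polarisations automatically agree (both being the restriction of $\Theta_{J\tC}$), and then to pin down the type of $\Xi$.

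For the equality, I would first observe that $\im h^*$ and $P$ are connected abelian subvarieties of $J\tC$ of the same dimension $3$: indeed $\dim\im h^* = g(C_j) = 3$ because $\ker h^*$ is finite, while $\dim P = g(\tC)-g(H) = 5-2 = 3$. Hence it is enough to prove the inclusion $\im h^* \subseteq P = (\ker \Nm_f)^0$, since then equality of connected subvarieties of equal dimension follows. To obtain the inclusion I would show that $\Nm_f \circ h^* = 0$. Take $y \in C_j$ with $h^{-1}(y) = \{x, jx\}$, so $h^*(y) = x + jx$ as a divisor; because $j$ is a lift of the hyperelliptic involution $\iota$ of $H$, i.e. $f \circ j = \iota \circ f$, one computes $\Nm_f(x+jx) = f(x) + \iota f(x)$. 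On the genus $2$ curve $H$ any point plus its hyperelliptic conjugate is the canonical $g^1_2$, so $f(x)+\iota f(x) \sim K_H$ independently of $x$. Applying this to a degree-zero class $y_1 - y_2$ gives $\Nm_f(h^*(y_1-y_2)) = [K_H] - [K_H] = 0$, so $\im h^* \subseteq \ker \Nm_f$; connectedness places it in $P$, and the dimension count yields $\im h^* = P$.

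For the type of $\Xi$, I would use that $h:\tC\ra C_j$ is the \'etale double covering determined by the distinguished point $\alpha \in JC_j[2]$ of Corollary \ref{cor:distpoint}, so $h^*: JC_j \ra P$ is an isogeny with kernel $\langle\alpha\rangle \cong \ZZ_2$. Since $\Nm_h \circ h^* = 2$ and the dual of $h^*$ is identified with $\Nm_h$ under the principal polarisations, pulling back the restricted polarisation gives $(h^*)^*\Xi = (h^*)^*\Theta_{J\tC} = 2\Theta_{C_j}$, a polarisation of type $(2,2,2)$ on $JC_j$ whose kernel $\ker\lambda_{2\Theta_{C_j}} = JC_j[2]$ carries a nondegenerate alternating form. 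Comparing degrees of the polarisation maps through $\deg\lambda_{(h^*)^*\Xi} = (\deg h^*)^2\,\deg\lambda_\Xi$ already forces the type $(e_1,e_2,e_3)$ of $\Xi$ to satisfy $e_1 e_2 e_3 = 4$, leaving only $(1,1,4)$ or $(1,2,2)$.

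To single out $(1,2,2)$ I would invoke the standard descent of polarisations through the isotropic subgroup $\langle\alpha\rangle$ (cf. \cite{bl}): from $2\Theta_{C_j} = (h^*)^*\Xi$ one gets $\ker\lambda_\Xi \cong \alpha^{\perp}/\langle\alpha\rangle$, where $\alpha^{\perp}$ is the orthogonal complement of $\alpha$ inside $(JC_j[2], e^{2\Theta_{C_j}})$. Since $\alpha^{\perp}\subseteq JC_j[2]$ is killed by $2$, the group $\ker\lambda_\Xi$ is an elementary abelian $2$-group of order $2^5/2 = 2^4$; this excludes $(1,1,4)$, whose polarisation kernel $(\ZZ_4)^2$ contains elements of order $4$, and leaves exactly type $(1,2,2)$. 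I expect this last step to be the main obstacle: establishing $e_1 e_2 e_3 = 4$ is a routine degree count, but distinguishing $(1,2,2)$ from $(1,1,4)$ genuinely requires the finer $2$-torsion structure of $\ker\lambda_\Xi$ furnished by the symplectic-complement description, which is where the isotropy of $\alpha$ and the $(2,2,2)$ type of $2\Theta_{C_j}$ really enter.
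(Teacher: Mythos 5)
Your proof is correct, and although its skeleton (a dimension count plus one inclusion, then reading the type off from $\ker h^*$) matches the paper's, both key steps are carried out by different means. For the inclusion $\im h^*\subseteq P$, the paper works in $\End(J\tC)$: it writes $h^*(JC_j)=\im(1+j)$ and $P=\ker(1+\sigma+\tau+\sigma\tau)^0$, and concludes from the identity $1+\sigma+\tau+\sigma\tau+j+j\sigma+j\tau+j\sigma\tau=0$, which holds because $\tC/\ZZ_2^3\cong\PP^1$ has trivial Jacobian. Your divisor-level computation of $\Nm_f\circ h^*=0$, using $f\circ j=\iota\circ f$ and $p+\iota p\sim K_H$ on the genus $2$ curve, proves exactly the same vanishing --- note that $(1+\sigma+\tau+\sigma\tau)\circ(1+j)=f^*\circ\Nm_f\circ h^*\circ\Nm_h$, so the paper's identity is equivalent to $\Nm_f\circ h^*=0$ --- just more concretely: the paper's version is slicker and reuses the $\ZZ_2^3$-action machinery running through the whole paper, while yours is elementary and self-contained. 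For the polarisation type, the paper is terse: it simply asserts that $\ker h^*=\langle\alpha\rangle$ forces type $(1,2,2)$, implicitly invoking the classical fact (going back to Mumford) on the restricted polarisation of $h^*(JC_j)\subset J\tC$ for an \'etale double covering. Your descent argument --- $(h^*)^*\Xi=2\Theta_{C_j}$ via $\widehat{h^*}=\Nm_h$, the degree count giving $e_1e_2e_3=4$, and the identification $\ker\lambda_\Xi\cong\alpha^{\perp}/\langle\alpha\rangle$ as an elementary abelian $2$-group, which rules out $(1,1,4)$ --- is precisely a proof of that standard fact, so you have supplied the details the paper delegates to the literature. Both routes are sound; yours trades brevity for completeness.
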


\begin{proof}
Since $\dim JC_j =\dim P = 3 $, it is enough to show that $h^*(JC_j) \subset 
P= \Ker (1 + \sigma + \tau + \sigma\tau )^{0}$. This follows at once from 
the fact $h^*(JC_j) = \im (1+j) $ and $1 + \sigma + \tau + \sigma\tau + j + j\sigma + j\tau 
+ j\sigma\tau $ is the zero map in $J\tC$. Since $h$ is \'etale, $h^*$ has kernel $\Ker h^*= \langle \alpha \rangle$, with $\alpha \in JC_j[2]\setminus \{0\}$, so
the polarization of $h^*(JC_j)$ is of type $(1,2,2)$.
\end{proof}

\begin{cor}
The map 
$$
\varphi:  E_{\sigma} \times E_{\tau} \times E_{\sigma\tau} \ra P, \qquad (a,b,c) \mapsto a+b+c,
$$
defines an isogeny of degree 16, such that  $\varphi^*\Xi $ is algebraically equivalent
to four times the principal polarisation on $E_{\sigma} \times E_{\tau} \times
E_{\sigma\tau}$.
\end{cor}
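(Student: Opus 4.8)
The plan is to factor $\varphi$ through the Jacobian of the non‑hyperelliptic genus 3 curve $C_j$ and to reduce the claim to the analogous (degree 8) statement for $C_j$, namely Proposition \ref{prop:nonhyp3} together with the remark following it. By Proposition \ref{prop:samedivisor} the curve $C_j$ is non‑hyperelliptic, and by the definition of $C_j$ together with Diagram \eqref{diag1} the three commuting involutions $\sigma,\tau,\sigma\tau$ descend to $C_j$ with quotients $E_\sigma=C_j/\sigma$, $E_\tau=C_j/\tau$ and $E_{\sigma\tau}=C_j/\sigma\tau$. Hence Proposition \ref{prop:nonhyp3} applies to $C_j$ and furnishes an addition isogeny
$$\varphi_{C_j}\colon E_\sigma\times E_\tau\times E_{\sigma\tau}\lra JC_j$$
of degree $8$, for which the remark after Proposition \ref{prop:nonhyp3} gives that $\varphi_{C_j}^*\Theta_{C_j}$ is algebraically equivalent to twice the principal product polarisation, which we abbreviate $\Theta_{\mathrm{prod}}$.

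First I would verify the factorisation $\varphi=h^*\circ\varphi_{C_j}$. Writing $\widetilde a_\alpha\colon\tC\ra E_\alpha$ and $a_\alpha\colon C_j\ra E_\alpha$ for the quotient maps $(\alpha\in\{\sigma,\tau,\sigma\tau\})$, the equality $E_\alpha=\tC/\langle j,\alpha\rangle=(\tC/j)/\alpha=C_j/\alpha$ yields $\widetilde a_\alpha=a_\alpha\circ h$, hence $\widetilde a_\alpha^*=h^*\circ a_\alpha^*$ on the corresponding elliptic factor. Summing over the three factors gives $\varphi=h^*\circ\varphi_{C_j}$; in particular the image of $\varphi$ lands in $h^*(JC_j)=P$ by Proposition \ref{iso-Prym}. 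Since $h$ is \'etale, $\Ker h^*=\langle\alpha\rangle$ has order $2$, so $h^*\colon JC_j\ra P$ is an isogeny of degree $2$, and composing isogenies gives $\deg\varphi=\deg h^*\cdot\deg\varphi_{C_j}=2\cdot 8=16$.

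For the polarisation I would use the standard identity for an \'etale double cover: since $\Nm_h\circ h^*=[2]$ on $JC_j$ and, under the principal polarisation identifications, the dual of $h^*$ is the norm map $\Nm_h$, the pullback $(h^*)^*\Theta_{J\tC}$ is algebraically equivalent to $2\Theta_{C_j}$. As $\Xi=\Theta_{J\tC}|_{P}$ and $h^*$ factors through the inclusion $P\hookrightarrow J\tC$, this gives $(h^*)^*\Xi\equiv 2\Theta_{C_j}$, whence
$$\varphi^*\Xi=\varphi_{C_j}^*\bigl((h^*)^*\Xi\bigr)\equiv\varphi_{C_j}^*(2\Theta_{C_j})=2\,\varphi_{C_j}^*\Theta_{C_j}\equiv 4\,\Theta_{\mathrm{prod}},$$
as claimed. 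The hard part is the polarisation bookkeeping rather than the degree: one must check that $\varphi$ uses precisely the embeddings $\widetilde a_\alpha^*(E_\alpha)\subset P$ so that the factorisation through $\varphi_{C_j}$ is strictly compatible with polarisations, and one must justify $(h^*)^*\Theta_{J\tC}\equiv 2\Theta_{C_j}$. The degree count then serves as a consistency check, since a type $(1,2,2)$ polarisation has $\deg\lambda_\Xi=16$, and arguing as in Lemma \ref{lem:psi} via the analogue of Diagram \eqref{iso-diag} forces $\deg\lambda_{\varphi^*\Xi}=\deg\widehat\varphi\cdot\deg\lambda_\Xi\cdot\deg\varphi=16^3=4^6$, which is exactly the degree of four times the principal product polarisation.
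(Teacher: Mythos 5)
Your proposal is correct and takes essentially the same route as the paper: the paper's own one-line proof deduces the corollary from Proposition \ref{prop:nonhyp3} applied to the non-hyperelliptic curve $C_j$ (using Proposition \ref{prop:samedivisor}), implicitly composing the degree-$8$ addition isogeny onto $JC_j$ with the degree-$2$ isogeny $h^*\colon JC_j \ra P$ of Proposition \ref{iso-Prym}, which is exactly the factorisation $\varphi = h^*\circ\varphi_{C_j}$ you spell out. Your explicit polarisation bookkeeping, $(h^*)^*\Theta_{J\tC}\equiv 2\Theta_{C_j}$ combined with the remark after Proposition \ref{prop:nonhyp3}, just fills in what the paper leaves to the reader.
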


\begin{proof}
According to Proposition \ref{prop:samedivisor}, $C_j$ is not hyperelliptic, so the Corollary follows at once 
from Proposition \ref{prop:nonhyp3}.
\end{proof}

Let $\cR^{iso}_2$ be the moduli space parametrising the isotropic Klein coverings over a genus 2 curve and
$$
\cB^{iso} := \cB^{(1,2,2)} =\{ (A, \Xi) \in \cA_3^{(1,2,2)} \  \mid \  \exists \ \ZZ_2^3  \subset  
\Aut(A), \  \textnormal{ compatible with } \ \Xi  \}.
$$
We define the Prym map 
 $$
 {\Pr}^{iso}:  R_2^{iso} \ra \cB^{iso},  \qquad (H, \eta, \xi) \mapsto (P, \Xi).
 $$
As a consequence of Theorem \ref{iso-thm} we have:

\begin{thm}\label{thm:isoprym}
The Prym map ${\Pr}^{iso}: R_2^{iso} \ra \cB^{iso}$ is  injective.
\end{thm}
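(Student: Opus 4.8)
The plan is to construct an explicit left inverse of ${\Pr}^{iso}$ on its image, in the spirit of the proof of Theorem \ref{thm:nonisoprym}, but channelling the reconstruction through the genus $3$ curve $C_j$. The key point is that, by Theorem \ref{iso-thm}, the datum $(H,\eta,\xi)\in\cR_2^{iso}$ is equivalent to the non-hyperelliptic genus $3$ curve $C_j$ endowed with its $\ZZ_2^2$ group of bielliptic involutions; moreover, once $C_j$ is known the entire tower is recovered without further choices, since $\tC\ra C_j$ is the \'etale double covering attached to the distinguished $2$-torsion point $\alpha\in JC_j[2]$ of Corollary \ref{cor:distpoint}, and $H=\tC/\langle\sigma,\tau\rangle$. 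Hence it suffices to recover the principally polarised abelian threefold $JC_j$ from $(P,\Xi)$: the Torelli theorem then returns $C_j$ as a curve, its $\ZZ_2^2$-structure being the one carried by the three distinguished elliptic curves, and Theorem \ref{iso-thm} delivers $(H,\eta,\xi)$.

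To reconstruct $JC_j$, first I would use the $\ZZ_2^3$-action on $P$ together with Lemma \ref{lem:cB} to recover, up to permutation, the three elliptic curves $E_\sigma,E_\tau,E_{\sigma\tau}\subset P$ as the images of the central idempotents of $\QQ[\ZZ_2^3]$, that is, as $\im(1+\sigma),\im(1+\tau),\im(1+\sigma\tau)$, and with them the degree-$16$ addition isogeny $\varphi\colon E_\sigma\times E_\tau\times E_{\sigma\tau}\ra P$ furnished by the Corollary to Proposition \ref{iso-Prym}. By Proposition \ref{iso-Prym} this $\varphi$ factors as $E_\sigma\times E_\tau\times E_{\sigma\tau}\stackrel{\varphi_1}{\ra}JC_j\ra JC_j/\langle\alpha\rangle\cong P$, where $\varphi_1$ has degree $8$ and $\varphi_1^*\Theta_{C_j}=2\Theta$, with $\Theta$ the product principal polarisation (Proposition \ref{prop:nonhyp3} and the discussion following it); note that $\varphi^*\Xi=4\Theta$. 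Consequently $JC_j=(E_\sigma\times E_\tau\times E_{\sigma\tau})/K_1$ for an index-$2$ subgroup $K_1\subset\Ker\varphi\cong\ZZ_2^4$, and I would single it out as a maximal isotropic subgroup of $\Ker\varphi$ for the restricted Weil form whose descended polarisation is principal. By the Corollary to Proposition \ref{prop:nonhyp3}, every principally polarised quotient of $E_\sigma\times E_\tau\times E_{\sigma\tau}$ arising in this way is the Jacobian of a non-hyperelliptic genus $3$ curve with a $\ZZ_2^2$-action, and the factorisation through $P$ is what isolates $JC_j$ among them.

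The main obstacle is exactly this last identification: proving that among the finitely many index-$2$ subgroups $K_1\subset\Ker\varphi$ there is a unique one (up to the permutation of the factors already built into the recovery of the $E_\alpha$) yielding a principal descended polarisation, so that $JC_j$ is unambiguously determined. This is a finite $\FF_2$-linear computation with the Weil pairing attached to $\varphi^*\Xi=4\Theta$, closely parallel to the kernel analysis of Lemma \ref{lem:psi} and the subgroup bookkeeping of Lemma \ref{lem:mult2}; I expect the class $\alpha$ to appear as the image under $\varphi_1$ of the non-trivial coset of $K_1$ in $\Ker\varphi$, consistently with its description in Corollary \ref{cor:distpoint} as the unique non-zero point of $E_\sigma\cap E_\tau\cap E_{\sigma\tau}$. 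Finally, the residual permutation of $E_\sigma,E_\tau,E_{\sigma\tau}$ corresponds precisely to relabelling the three pairs of points on $\PP^1$ (compare the action of $\Aut(V_4)$ in Proposition \ref{prop:PhiPsi}), hence to one and the same point of $\cR_2^{iso}$. Therefore the reconstruction is well defined and inverts ${\Pr}^{iso}$ on its image, proving injectivity.
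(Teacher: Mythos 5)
Your overall skeleton agrees with the paper's proof: both arguments channel the reconstruction through the non-hyperelliptic curve $C_j$ with its $\ZZ_2^2$-action and then invoke Theorem \ref{iso-thm} (the paper does this via Proposition \ref{iso-Prym} and Proposition \ref{prop:samedivisor}). However, the technical core of your version, the explicit recovery of $JC_j$ from $(P,\Xi)$, contains two genuine errors. First, the elliptic curves you can actually extract from the data $(P,\Xi)$ with its $\ZZ_2^3$-action are not $E_\sigma,E_\tau,E_{\sigma\tau}$: by Theorem \ref{thm:jtc}, the images of the idempotents $\im(1+\sigma),\im(1+\tau),\im(1+\sigma\tau)$ inside $P$ are the curves $E_{j\sigma}=E_\sigma/\langle\alpha\rangle$, $E_{j\tau}$, $E_{j\sigma\tau}$, and the addition map from their product to $P$ is the degree-$2$ isogeny of Lemma \ref{lem:isoisogeny}, not the degree-$16$ isogeny $\varphi$. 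The curves $E_\sigma,E_\tau,E_{\sigma\tau}$ live upstairs in $JC_j$, which is exactly what you are trying to reconstruct, so $\varphi$ cannot be quoted as part of the given data; and since $JC_j$ and $E_{j\sigma}\times E_{j\tau}\times E_{j\sigma\tau}$ are two \emph{different} degree-$2$ covers of $P$ (neither dominating the other), running your kernel analysis with the curves that are actually visible in $P$ can never produce $JC_j$ as a quotient.

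Second, even granting the isogeny $\varphi$, your uniqueness claim for $K_1$ is false. Write $a\in E_\sigma[2]$, $b\in E_\tau[2]$, $c\in E_{\sigma\tau}[2]$ for the points mapping to $\alpha$. The form induced by $e^{2\Theta}$ on $\ker\varphi\cong\FF_2^4$ has radical $V=\{0,(a,b,0),(a,0,c),(0,b,c)\}$ and is nondegenerate on $\ker\varphi/V\cong\FF_2^2$; since over $\FF_2$ every vector is isotropic, there are exactly \emph{three} index-$2$ isotropic subgroups of $\ker\varphi$, all containing $V$. One is $\ker\varphi_1$, with quotient $JC_j$; another is $\langle a\rangle\times\langle b\rangle\times\langle c\rangle$, whose quotient is the polarised \emph{product} $E_{j\sigma}\times E_{j\tau}\times E_{j\sigma\tau}$; the third gives yet another principally polarised threefold. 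All three satisfy every criterion you impose (index $2$ in $\ker\varphi$, isotropy, principal descended polarisation, factorisation through $P$), so your recipe does not isolate $JC_j$. In particular your appeal to the Corollary following Proposition \ref{prop:nonhyp3} is a misreading: that corollary is an implication about Jacobians of non-hyperelliptic genus $3$ curves, not a statement that every principally polarised quotient of $E_\sigma\times E_\tau\times E_{\sigma\tau}$ factoring through $P$ is such a Jacobian, and the product quotient above is an explicit counterexample. To repair the argument you would need at least to add indecomposability and then prove that the third quotient is never the Jacobian of a non-hyperelliptic genus $3$ curve whose bielliptic involutions induce the given $\ZZ_2^2$-action; none of this is addressed, and it is precisely the part of the reconstruction that your proposal flags as an ``expected'' computation rather than carries out.
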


\begin{proof}
Let $P(\tC /H)$ be an element in the image of $\Pr^{iso}$. According to Proposition \ref{iso-Prym},
$P(\tC /H) \simeq JC_j / \langle \alpha \rangle$, for some 2-torsion point $\alpha$ and by Proposition
\ref{prop:samedivisor}, $C_j$ is a non-hyperelliptic curve with a $\ZZ_2^2$ subgroup of automorphisms.
According to  Theorem \ref{iso-thm} one can recover the curve $H$ and the Klein isotropic subgroup of $JH[2]$
such that the corresponding covering $\tC\ra H$ has $JC_j / \langle \alpha \rangle$ as Prym variety.
\end{proof}

\subsection{A description of double coverings arising in the construction}

We shall now describe explicitly the 2-torsion points defining the \'etale coverings from
$\tC$. Consider the elliptic curve $E_{j\t}=\tC/j\t$, which is also an \'etale double cover of $E_\t= E_{j\t}/
\langle \tau \rangle$. Similarly, we define the elliptic curves 
$E_{j\s}=\tC/j\s$ and $E_{j\s\t}=\tC/j\s\t$. They fit in the following commutative diagrams.

\begin{multicols}{3}
\begin{equation}\label{eq:diagram}
\xymatrix@R=.5cm@C=.5cm{
& \tC \ar[dr]^{\text{\'et}} \ar[dl]& \\
E_{j\s} \ar[dr]^{\text{\'et}}& & C_j \ar[dl]\\
 &E_{\s}& }
\end{equation}

\begin{equation*} 
\xymatrix@R=.5cm@C=.5cm{
& \tC \ar[dr]^{\text{\'et}} \ar[dl]& \\
E_{j\t} \ar[dr]^{\text{\'et}}& & C_j \ar[dl]\\
 &E_{\t}& }
\end{equation*}

\begin{equation*} 
\xymatrix@R=.5cm@C=.5cm{
& \tC \ar[dr]^{\text{\'et}} \ar[dl]& \\
E_{j\s\t} \ar[dr]^{\text{\'et}}& & C_j \ar[dl]\\
 &E_{\s\t}& }
\end{equation*}
\end{multicols}

By abuse of notation, let
\begin{eqnarray*}
\Fix(j\sigma) &=& \{ S_1, S_2, jS_1, jS_2,   \tau (S_1), \tau (S_2),  j\tau (S_1), j\tau (S_2)\} \\
\Fix(j\tau) &=& \{ T_1, T_2, jT_1, jT_2, \sigma (T_1), \sigma (T_2), j\sigma (T_1), j\sigma (T_2)\} \\
\Fix(j\sigma\tau) &=& \{ R_1, R_2, jR_1, jR_2,\tau (R_1), \tau (R_2), j\tau (R_1), j\tau (R_2)\} 
\end{eqnarray*}
the fixed locus  of the automorphisms in $\tC$. The quotient map $\tC \ra E_{j\sigma}$ has 8 
ramification points and one checks that the image of $\Fix(j\sigma\tau)$ are the Weierstrass points of
the elliptic curve $E_{j\sigma}$. Moreover, the  double covering $E_{j\sigma} \ra \PP^1 \simeq E_{j\sigma}/\tau$
gives the linear equivalence
$$
T_1 +\sigma T_1 \sim T_2 + \sigma T_2,
$$
in $E_{j\sigma} = \tC / j\sigma$, where we keep notation for the points in the quotient. 
This implies the linear equivalence
\begin{equation} \label{lin-equiv}
T_1 + jT_1 + \sigma T_1 + j\sigma T_1 \sim T_2 + jT_2 + \sigma T_2 +j \sigma T_2
\end{equation}
in $\tC$. It is not difficult to verify that the 2-torsion point $\eta:= [T_1 +\sigma T_1 - (T_2 + \sigma T_2)]$
in $JC_j$ (again, by abuse of notation, the points in the quotients are denoted with the same symbols),
is in the kernel of the pullback $JC_j \ra J\tC$ and it is different from zero since $C_j$ is non-hyperelliptic.
So $\eta \in JC_j[2]$ defines the \'etale covering $\tC \ra C_j$.  

On the other hand, consider the non-zero 2-torsion point $[T_1 + jT_1 - (T_2 + jT_2)] \in JC_{\sigma} $ which 
according to \eqref{lin-equiv} is in the kernel of $JC_{\sigma} \ra J\tC$,  it is the element in $JC_{\sigma}[2]$
defining the \'etale covering $\tC \ra C_{\sigma}$.  

Using a similar argument one can write down the 
2-torsion points defining the other \'etale coverings.

\subsection{Decomposition of $J\tC$}

Now, we would like to describe the Jacobian of $\tC$ in terms of its subvarieties. For non-isotropic case it has
been done in \cite[Section 5]{bo}. We recall the following notation from \cite{bo}. Let $M_1,\ldots,M_k$ be abelian subvarieties
of an abelian variety $A$ such that the associated rational idempotents $\varepsilon_{M_i}\in End_\QQ(A)$ satisfy
$\Sigma_i\varepsilon_{M_i}=1$. Then we write $A=M_1\boxplus M_2\boxplus\ldots\boxplus M_k$. The advantage of such
notation with respect to writing isogenous is that, if all the $M_i's$ are simple and $Hom(M_i,M_j)=0$ for every 
$i\neq j$, then such presentation is
unique (up to permutation). 

\begin{thm}\label{thm:jtc}
Let $H$ be a general genus 2 curve (i.e. such that $JH$ is simple).
Let $f:\tC\lra H$ be an isotropic Klein covering defined by a group $G$ and let $A=JH/G$. Let $\s,\t,\s\t$ be the covering involutions on $\tC$ and let $j$ be defined by Proposition \ref{prop:involutionj}. 
Then $$J\tC=A\boxplus E_{j\s}\boxplus E_{j\t}\boxplus E_{j\s\t}.$$
In this presentation one can find images of Jacobians of quotient curves and Pryms. For example
$P(\tC/H)=JC_j/\alpha=E_{j\s}\boxplus E_{j\t}\boxplus E_{j\s\t}$, the image of $JC_\s$ is $A\boxplus E_{j\s}$ and its Prym $P(\tC/C_\s)=E_{j\t}\boxplus E_{j\s\t}$.
\end{thm}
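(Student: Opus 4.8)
The plan is to decompose $J\tC$ through the action of the automorphism group $\Gamma=\langle j,\s,\t\rangle\cong\ZZ_2^3$, mirroring the treatment of the non-isotropic case in \cite[Section 5]{bo}. The principal polarisation of $J\tC$ is $\Gamma$-invariant, and since $\Gamma$ is abelian of exponent $2$ its eight rational characters $\chi$ produce symmetric orthogonal idempotents $e_\chi=\tfrac{1}{8}\sum_{g\in\Gamma}\chi(g)\,g\in\End_\QQ(J\tC)$ with $\sum_\chi e_\chi=1$; hence $J\tC=\boxplus_\chi\,\Ima(e_\chi)$. The key book-keeping fact I would record first is that for any $\alpha\in\Gamma$ the image of the pullback from $\tC/\langle\alpha\rangle$ is $\boxplus_{\chi(\alpha)=1}\Ima(e_\chi)$, so the genus of each quotient computes a partial sum of the component dimensions $d_\chi:=\dim\Ima(e_\chi)$.

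Next I would determine the $d_\chi$. Feeding in $g(\tC/\Gamma)=0$, together with $g(\tC/\alpha)=3$ for $\alpha\in\{j,\s,\t,\s\t\}$ and $g(\tC/\alpha)=1$ for $\alpha\in\{j\s,j\t,j\s\t\}$ from Proposition \ref{prop:involutionj}, yields a linear system in the $d_\chi$ whose unique nonnegative solution has all $d_\chi=0$ except: $d_\chi=2$ for the character that is trivial on $\s$ and $\t$ and nontrivial on $j$, and $d_\chi=1$ for each of the three characters trivial on exactly one of $j\s,j\t,j\s\t$. Thus only four summands survive.

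I would then identify these four summands as honest abelian subvarieties. The two-dimensional one is the $\langle\s,\t\rangle$-invariant part, namely $\Ima(f^*)$ for the Klein covering $f:\tC\ra H$; as $f$ is the \'etale $\ZZ_2^2$-cover defined by $G\subset JH[2]$ one has $\Ker f^*=G$, so this piece is $JH/G=A$, on which $j$ acts as $-1$ because it lifts the hyperelliptic involution of $H$. For the three one-dimensional summands I would invoke that each $\tC\ra E_{j\alpha}=\tC/\langle j\alpha\rangle$ is a \emph{branched} double cover (it has $8$ fixed points by Proposition \ref{prop:involutionj}), so its pullback is injective and the $\langle j\alpha\rangle$-invariant part of $J\tC$ is genuinely isomorphic to $E_{j\alpha}$, and not merely to the further quotient $E_\alpha=\tC/\langle j,\alpha\rangle$. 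This gives $J\tC=A\boxplus E_{j\s}\boxplus E_{j\t}\boxplus E_{j\s\t}$; the hypothesis that $JH$, hence $A$, is simple ensures the two-dimensional summand does not split further, and for general $H$ the three elliptic factors are pairwise non-isogenous, so $\Hom$ vanishes between distinct factors and the presentation is the unique one.

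Finally, the \emph{moreover} assertions come for free by regrouping the four idempotents according to the sign of a single involution. Removing $A=\Ima(f^*)$ leaves its polarised complement, the Prym $P(\tC/H)=E_{j\s}\boxplus E_{j\t}\boxplus E_{j\s\t}$, which equals $JC_j/\langle\alpha\rangle$ by Proposition \ref{iso-Prym}; collecting the summands on which $\s$ acts trivially gives the image of $JC_\s$ as $A\boxplus E_{j\s}$, and the $\s$-anti-invariant complement gives $P(\tC/C_\s)=E_{j\t}\boxplus E_{j\s\t}$. I expect the main obstacle to be precisely the identification in the third paragraph: pinning down each one-dimensional summand as $E_{j\alpha}$ rather than the isogenous but non-isomorphic $E_\alpha$, which is exactly where the branched-versus-\'etale distinction for the maps $\tC\ra E_{j\alpha}$ is indispensable.
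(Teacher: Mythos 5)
Your proof is correct, but it takes a genuinely different route from the paper's. You carry out the full isotypical decomposition of $J\tC$ under $\Gamma=\langle j,\s,\t\rangle\cong\ZZ_2^3$ via the eight character idempotents $e_\chi$, pin down the dimensions $d_\chi$ by a linear system fed with the genera of all quotient curves (Proposition \ref{prop:involutionj} plus Riemann--Hurwitz), and then identify the four surviving pieces; this is essentially the Kani--Rosen style argument, which the paper itself acknowledges as an alternative in the remark following Lemma \ref{lem:isoisogeny}. The paper instead assembles results it has already proved: $J\tC=A\boxplus P(\tC/H)$ because $f$ is \'etale with kernel $G$; $JC_j=E_\s\boxplus E_\t\boxplus E_{\s\t}$ with the distinguished $2$-torsion point $\alpha$ in the triple intersection (Proposition \ref{prop:nonhyp3}); $P(\tC/H)=h^*(JC_j)=JC_j/\alpha$ (Proposition \ref{iso-Prym}); whence $P(\tC/H)=E_\s/\alpha\boxplus E_\t/\alpha\boxplus E_{\s\t}/\alpha=E_{j\s}\boxplus E_{j\t}\boxplus E_{j\s\t}$. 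For the precise idempotent identity the paper only needs the relation $2(1+j\s)=1+j+\s+j\s$ from \cite[Proposition 12.3.2]{bl} and the vanishing of the sum of all eight group elements, giving $\varepsilon_{E_{j\s}}+\varepsilon_{E_{j\t}}+\varepsilon_{E_{j\s\t}}=1-\varepsilon_A$ in two lines, versus your eight-unknown bookkeeping. What your approach buys: it is self-contained (it bypasses Propositions \ref{prop:nonhyp3} and \ref{iso-Prym}, needing only the quotient genera), it yields all $d_\chi$ simultaneously, and the ``moreover'' statements for every intermediate quotient fall out uniformly by regrouping characters; what the paper's approach buys is brevity and explicit control of how $\alpha$ glues the three elliptic curves inside $JC_j$. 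One caveat: your closing claim that for general $H$ the three elliptic factors are pairwise non-isogenous, so that the presentation is unique, is not implied by the theorem's stated hypothesis (only that $JH$ is simple); since neither the theorem nor your decomposition needs that uniqueness, this is harmless, but it should be flagged as an additional genericity assumption rather than a consequence of the hypothesis.
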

\begin{proof}
The proof is straightforward. Since $f$ is \'etale and given by $G$, we get that $J\tC=A\boxplus P(\tC/H)$. By Proposition
\ref{prop:nonhyp3}, $JC_j=E_{\s}\boxplus E_{\t}\boxplus E_{\s\t}$ and the 2-torsion point
$\alpha$ is in the intersection of three elliptic curves. Hence, $P(\tC/H)=JC_j/\alpha=E_{\s}/\alpha\boxplus E_{\t}/\alpha\boxplus E_{\s\t}/\alpha=E_{j\s}\boxplus E_{j\t}\boxplus E_{j\s\t}$.

To be more precise with the last claim, using Diagram \ref{eq:diagram} and \cite[Proposition 12.3.2]{bl} applied for the curve
$E_{j\s}$ and the quotient map $\tC\ra E_{j\s}$, one gets $2(1+j\s)=1+j+\s+j\s$. Analogously,
$2(1+j\t)=1+j+\t+j\t$ and $2(1+j\s\t)=1+j+\s\t+j\s\t$. Adding the equations together and having in mind that 
the sum of all involutions is the zero map, one gets
$$
2(1+j\s+1+j\t+1+j\s\t)=1+j+\s+j\s+1+j+\t+j\t+1+j+\s\t+j\s\t=2+2j.$$
This shows that $\varepsilon_{E_{j\s}}+\varepsilon_{E_{j\t}}+\varepsilon_{E_{j\s\t}}=
\varepsilon_{JC_{j}/\alpha}$ and in particular $\varepsilon_{E_{j\s}}+\varepsilon_{E_{j\t}}+\varepsilon_{E_{j\s\t}}=\varepsilon_{P(\tC/H)}=1-\varepsilon_{A}$.
\end{proof}
\begin{lem}\label{lem:isoisogeny}
The kernel of the addition map \[\varphi:E_{j\s}\times E_{j\t}\times E_{j\s\t}\lra E_{j\s}\boxplus E_{j\t}\boxplus E_{j\s\t}\subset J\tC\] is of the form
\[ \ker \varphi =\{(0,0,0),(x_1,x_2,x_3)\}\]
for some $x_i\neq0,\ i=1,2,3$.
\end{lem}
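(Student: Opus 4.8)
The plan is to realise $\varphi$ as one edge of a commutative square of isogenies, read off its degree to be $2$, and then rule out vanishing coordinates by an intersection computation inside $P$.

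First I would assemble a diagram from maps already available. Write $\mu\colon E_\s\times E_\t\times E_{\s\t}\to JC_j$ for the addition map, which by Proposition \ref{prop:nonhyp3} (applied to $C_j$) is an isogeny of degree $8$; write $q\colon JC_j\to JC_j/\langle\alpha\rangle=P$ for the quotient by the distinguished $2$-torsion point $\alpha$ of Corollary \ref{cor:distpoint} and Proposition \ref{iso-Prym}, an isogeny of degree $2$; and let $p\colon E_\s\times E_\t\times E_{\s\t}\to E_{j\s}\times E_{j\t}\times E_{j\s\t}$ be the product of the three quotient maps $E_i\to E_i/\langle\alpha\rangle=E_{ji}$ identified in Theorem \ref{thm:jtc}, so that $\deg p=2^3=8$. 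Since $q$ is a homomorphism and its restriction $q|_{E_i}$ is precisely the $i$-th factor of $p$, one has $q\circ\mu=\varphi\circ p$. Comparing degrees gives $\deg\varphi\cdot 8=\deg(q\circ\mu)=2\cdot 8$, hence $\deg\varphi=2$. Thus $\ker\varphi=\{(0,0,0),(x_1,x_2,x_3)\}$ with $2(x_1,x_2,x_3)=0$, so each $x_i$ is a $2$-torsion point.

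It then remains to prove $x_i\neq 0$ for all $i$, and the step carrying the real content is to show that the images $E_{j\s},E_{j\t},E_{j\s\t}$ meet pairwise only in $0$ inside $P$. The subtlety is that although $E_\s,E_\t,E_{\s\t}$ pairwise intersect in $\{0,\alpha\}$ inside $JC_j$ (Proposition \ref{prop:nonhyp3} and Corollary \ref{cor:distpoint}), passing to the quotient collapses this intersection rather than enlarging it. Concretely, if $q(a)=q(b)$ with $a\in E_\s$ and $b\in E_\t$, then $a-b\in\langle\alpha\rangle$; since $\alpha\in E_\t$, in either case $a\in E_\s\cap E_\t=\{0,\alpha\}$, whence $q(a)=0$. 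Therefore $E_{j\s}\cap E_{j\t}=\{0\}$, and symmetrically for the remaining two pairs.

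Finally I would conclude by contradiction. If, say, $x_1=0$, then $(0,x_2,x_3)\in\ker\varphi$ forces $x_2=-x_3$, placing $x_2$ in $E_{j\t}\cap E_{j\s\t}=\{0\}$; hence $x_2=x_3=0$ and the kernel element is trivial, a contradiction. Running the same argument over each index shows $x_1,x_2,x_3$ are all nonzero, which is the assertion. The only points demanding care are the commutativity $q\circ\mu=\varphi\circ p$ and the identifications $E_{ji}=E_i/\langle\alpha\rangle$ furnished by Theorem \ref{thm:jtc}; once these are secured, the degree count and the intersection computation are routine.
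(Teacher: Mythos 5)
Your proof is correct, and it diverges from the paper's argument at both main steps, most substantially at the nonvanishing step. For the degree, the paper argues purely with polarisation types: by Proposition \ref{iso-Prym} the polarisation on $P$ has type $(1,2,2)$, the restricted polarisations on the three elliptic curves have type $(2)$, so $\varphi$ is a polarised isogeny from a $(2,2,2)$-polarised product onto a $(1,2,2)$-polarised threefold and its kernel has order $2$. Your commutative square $q\circ\mu=\varphi\circ p$, with $\deg\mu=8$ from Proposition \ref{prop:nonhyp3}, $\deg q=2$ and $\deg p=8$, reaches the same conclusion by multiplicativity of degrees and is equally sound, granted the identifications $E_{j\s}=E_\s/\langle\alpha\rangle$, etc., which are indeed established in the proof of Theorem \ref{thm:jtc}. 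The genuine difference is how a vanishing coordinate is excluded. The paper invokes Mumford \cite[p.~346]{M}: since $C_\s$ is hyperelliptic, $P(\tC/C_\s)$ is the polarised product of $E_{j\t}$ and $E_{j\s\t}$, so these curves meet only in $0$, contradicting $x_2=-x_3\neq 0$. You instead prove $E_{j\t}\cap E_{j\s\t}=\{0\}$ (and its two symmetric statements) by a direct element chase through the quotient $q:JC_j\to JC_j/\langle\alpha\rangle$, using that $\ker q=\langle\alpha\rangle$ and that $\alpha$ lies in each pairwise intersection, e.g.\ $E_\s\cap E_\t=\{0,\alpha\}$, which follows from Proposition \ref{prop:nonhyp3} together with Corollary \ref{cor:distpoint}. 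Your route is more self-contained, requiring no input beyond the paper's own structural results (it also subsumes the paper's intermediate observation that at least two of the $x_i$ are nonzero); the paper's appeal to Mumford is shorter and records in passing the stronger fact that $P(\tC/C_\s)$ splits as a polarised product. The closing contradiction, that a kernel element with one zero coordinate must be trivial against $\deg\varphi=2$, is the same in both arguments.
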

\begin{proof}
Since the restricted polarisation to the Prym variety is of type $(1,2,2)$ and to the elliptic curves is of type $(2)$, the kernel is of cardinality 2. By definition, the restriction of $\varphi$ to each elliptic curve is an embedding, hence at least two of $x_i$'s are non-zero.
Assume by contradiction (and without loss of generality) that $x_1=0,\ x_2\neq0,\ x_3\neq0$. Then $\varphi(0,x_2,0)=\varphi(0,0,-x_3)$ and hence $E_{j\t}\cap E_{j\s\t}\neq \{0\}$, so the addition map $E_{j\t}\times E_{j\s\t}\lra P(\tC/C_\s)$ is not an isomorphism. But $C_\s$ is hyperelliptic, so by \cite[p. 346]{M}  $P(\tC/C_\s)$ is polarised isogenous to the product $E_{j\t}\times E_{j\s\t}$, a contradiction.
\end{proof}

\begin{rem}
Theorem \ref{thm:jtc} is a more detailed version of \cite[Theorem 6.3.iv]{rr} in the particular case, when the base curve has genus 2, 
the coverings are \'etale and defined by an isotropic Klein group.
This result can also be deduced from the Kani-Rose decomposition theorem \cite{KR}.
\end{rem}

\section{Characterisation of Prym varieties}\label{sec:pryms}

In order to characterise the image of the Prym map, we need to introduce period matrices. Following  the notation in \cite{bl,Bor}, we denote the Siegel space $\ch_g=\{Z\in M(g,g,\CC): Z=\ ^tZ, \Ima Z>0\}$.
We denote the imaginary part of $z \in\CC$ by $z'=\Ima(z)$ and by $Z[i]$, respectively $Z'[i]$, the i-th column vector of a matrix $Z$, respectively its imaginary part.  Let
$D=\mbox{diag}(d_1,d_2,d_3)$ be a possible polarisation type\footnote{The vector $(d_1, \ldots, d_g )$ as well as the matrix $\mbox{diag}(d_1, \ldots, d_g )$
are both called the {\it type of the polarization}}, i.e. $d_1, d_2, d_3$  are positive integers with
$d_1|d_2,\ d_2|d_3$. If $\Lambda$ is a lattice generated by the column vectors of $Z\in\ch_3$ and $D$, then
we denote by $A_{Z,D}=\CC^3/\Lambda$ the corresponding $D$-polarised abelian threefold.
The canonical projection map from the Siegel space to the moduli space $\cA_g^D$ will be denoted by $\pi_{D}:\ch_g\ra
\cA_g^{D}$.
If we define the alternating bilinear form on the column vectors by $\omega(Z'[i],D[i])=d_i$, 
$i=1,2,3$ (and zero otherwise), then
$\omega$ is the imaginary part of the hermitian form defining a polarisation of type $D$ on $A_{Z,D}$, see
\cite[Section 8.1]{bl} for details. 
 Finally, for a homomorphism $f$ of abelian varieties we denote its analytic representation by $F$.

We shall describe the periods matrices of the Prym varieties in 
$\Ima (P^{ni})  \subset \cB^{ni} \subset \cA_3^{D_4}$ with $D_4=\mbox{diag} (1,1,4)$.
Recall from Proposition \ref{prop:PhiPsi} that the map $\Phi:\cU\lra\cB^{ni}$ is given by:
\[ \Phi ((E_1, \lambda_1), (E_2, \lambda_2), (E_3, \lambda_3))  = ((E_1 \times E_2 \times E_3) / K,  \Xi)=:(P,\Xi),\]
hence it can be seen as a rational map from $\cA_{1}[2]^{(3)}$ defined in the complement of the diagonals 
in $\cA_{1}[2]^{(3)}$.
Moreover, for each Prym variety $P$ in the image of $\Pr^{ni}$ there is an isogeny
\[\varphi:E_1\times E_2\times E_3\lra P.\]

Consider the following set in $\ch_3$

\[\cZ_4=\{Z=(z_{ij})\in\ch_3: -2z_{11}=z_{31}, z_{21}=z_{11}+z_{22}-\frac{z_{33}}{4}, -2z_{22}=z_{32}\}.\]

\begin{lem}\label{lem:z4}
Let $z_1,z_2,z_3\in\CC$.
Assume that a lattice  $\Lambda$ is generated by the column vectors of
\[\left[
\arraycolsep=5pt\def\arraystretch{2}
\begin{array}{cccccc}
\dfrac{z_2+z_3}{4}&\dfrac{z_2}{4}&\dfrac{-z_2-z_3}{2}&1&0&0\\
\dfrac{z_2}{4}&\dfrac{z_1+z_2}{4}&\dfrac{-z_1-z_2}{2}&0&1&0\\
\dfrac{-z_2-z_3}{2}&\dfrac{-z_1-z_2}{2}&z_1+z_2+z_3&0&0&4\\
\end{array}\right]=[Z\ D_4],\]
where the first three columns %, called $Z[1],Z[2],Z[3]$ form a positive definite symmetric 
form a matrix $Z\in\cZ_4$.
Then the abelian threefold $A_{Z,D_4}= \CC^3/\Lambda$ contains three embedded elliptic curves $f_i: E_i \hookrightarrow A_{Z,D_4}$, $i=1,2,3$. The corresponding lattices and analytic representations of $f_i$ are given by:
\begin{align*}
&\Lambda_{E_1}\text{ generated by }[z_1\ 4],\ F_{1}(x)=(0,-\tfrac{x}{2},x)\in \CC^3,\\% A_\Lambda\\
&\Lambda_{E_2}\text{ generated by }[z_2\ 4],\ F_{2}(x)=(\tfrac{x}{2},\tfrac{x} {2},-x)\in \CC^3,\\%A_\Lambda\\
&\Lambda_{E_3}\text{ generated by }[z_3\ 4],\ F_{3}(x)=(-\tfrac{x}{2},0,x)\in \CC^3.%A_\Lambda
\end{align*}
In particular, there exists a polarised isogeny $f_1+f_2+f_3:E_1\times E_2\times E_3\lra A_{Z,D_4}$.
\end{lem}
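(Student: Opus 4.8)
The plan is to reduce all three assertions---that each $f_i$ is a well-defined embedding, that $\varphi:=f_1+f_2+f_3$ is an isogeny, and that it is polarised---to explicit linear algebra with the period matrix $[Z\ D_4]$. Write $Z[1],Z[2],Z[3]$ and $D_4[1],D_4[2],D_4[3]$ for the columns, so that $\Lambda=\ZZ\langle Z[1],Z[2],Z[3],D_4[1],D_4[2],D_4[3]\rangle$. Since $Z\in\cZ_4\subset\ch_3$ we have $\Ima Z>0$, and a direct determinant expansion yields the identity $\det(\Ima Z)=\tfrac1{16}\,\Ima z_1\,\Ima z_2\,\Ima z_3$; hence each $\Ima z_i\neq0$, so every $E_i=\CC/\langle z_i,4\rangle$ is an elliptic curve, and the six columns of $[Z\ D_4]$ form an $\RR$-basis of $\CC^3$.

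First I would check that each $f_i$ descends to a homomorphism of tori, i.e. $F_i(\Lambda_{E_i})\subset\Lambda$. As $\Lambda_{E_i}$ is generated by $z_i$ and $4$, it suffices to express $F_i(z_i)$ and $F_i(4)$ as integral combinations of the columns; using the defining relations of $\cZ_4$ one finds
\begin{align*}
F_1(z_1)&=2Z[1]+Z[3], & F_1(4)&=-2D_4[2]+D_4[3],\\
F_2(z_2)&=2Z[1]+2Z[2]+Z[3], & F_2(4)&=2D_4[1]+2D_4[2]-D_4[3],\\
F_3(z_3)&=2Z[2]+Z[3], & F_3(4)&=-2D_4[1]+D_4[3].
\end{align*}
To see that $f_i$ is an embedding, note that in each pair the coefficient of $Z[3]$ in $F_i(z_i)$ and of $D_4[3]$ in $F_i(4)$ is $\pm1$. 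Thus if $x=sz_i+4t$ (with $s,t\in\RR$) satisfies $F_i(x)=sF_i(z_i)+tF_i(4)\in\Lambda$, then reading off the $Z[3]$- and $D_4[3]$-coordinates in the $\RR$-basis above forces $s,t\in\ZZ$; hence $x\in\Lambda_{E_i}$ and $\Ker f_i=0$. The isogeny claim for $\varphi$ then follows because its analytic representation $F=(F_1,F_2,F_3)\colon\CC^3\to\CC^3$, with columns $F_1(1)=(0,-\tfrac12,1)$, $F_2(1)=(\tfrac12,\tfrac12,-1)$, $F_3(1)=(-\tfrac12,0,1)$, has determinant $\tfrac14\neq0$; so $F$ is a $\CC$-linear isomorphism and $\varphi$ is surjective with finite kernel.

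The step I expect to be the most delicate is showing $\varphi$ is polarised. In the basis $(Z[1],Z[2],Z[3],D_4[1],D_4[2],D_4[3])$ the polarisation $\Xi$ of $A_{Z,D_4}$ is the Riemann form $E$ with $E(Z[i],D_4[j])=d_i\delta_{ij}$, vanishing on pairs of columns of $Z$ and on pairs of columns of $D_4$ (see \cite[Section 8.1]{bl}). I would pull $E$ back along $\varphi$ using the integral expressions above. Since each $F_i(z_i)$ lies in the span of the columns of $Z$ and each $F_i(4)$ in the span of the columns of $D_4$, the pullback vanishes on any pair $\{F_i(z_i),F_j(z_j)\}$ and on any pair $\{F_i(4),F_j(4)\}$; the only surviving pairings are the cross-terms $E(F_i(z_i),F_j(4))$, which a short computation---this is where the precise entries of $Z$ forced by $\cZ_4$ are essential---evaluates to $4\delta_{ij}$.

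Consequently, in the source basis $(z_1,z_2,z_3,4,4,4)$ the form $\varphi^{*}E$ equals $\left(\begin{smallmatrix}0&4I_3\\-4I_3&0\end{smallmatrix}\right)$. The vanishing of the off-diagonal cross-terms ($i\neq j$) shows $\varphi^{*}E$ is a product polarisation, and the diagonal value $4$ identifies it as four times the principal polarisation on $E_1\times E_2\times E_3$, of type $(4,4,4)$; hence $\varphi$ is a polarised isogeny, necessarily of degree $16$ since $\chi(\varphi^{*}\Xi)=\deg(\varphi)\,\chi(\Xi)$ gives $\deg\varphi=4^{3}/4$. The main obstacle throughout is bookkeeping with the correct polarisation convention so that these cross-terms cancel exactly; once the Riemann form is fixed, everything is a finite verification driven by the relations defining $\cZ_4$.
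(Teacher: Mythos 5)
Your proof is correct and follows essentially the same route as the paper's: the same determinant identity (the paper misprints the constant as $\tfrac14$; your $\tfrac1{16}$ is the right value, and either way it gives $\Ima z_i\neq 0$), the same integral expressions $F_1(z_1)=2Z[1]+Z[3]$, $F_1(4)=-2D_4[2]+D_4[3]$, etc., and the same evaluations of the symplectic form $\omega$. If anything you are more complete than the paper: its appeal to ``independent primitive vectors'' is replaced by your explicit coefficient (saturation) argument for injectivity, and where the paper only computes the restricted type-$(4)$ polarisation on each factor and leaves the ``in particular'' implicit, you also check that the cross-terms $E(F_i(z_i),F_j(4))$ vanish for $i\neq j$, which is precisely what identifies $\varphi^{*}\Xi$ with four times the product principal polarisation and makes $\varphi$ a polarised isogeny.
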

\begin{proof}
Since $\det(Z')=\frac{1}{4}z_1'z_2'z_3'$, we see that the imaginary parts of $z_i$ are non-zero, hence
$E_i$ are well-defined elliptic curves.

Since $F_{1}(z_1)=2Z[1]+Z[3]$ and $F_{1}(4)=-2D_4[2]+D_4[3]$ are independent primitive vectors in $\Lambda$, we see that $f_{1}$ is an embedding. Moreover, $\omega(2Z'[1]+Z'[3],-2D_4[2]+D_4[3])=4$, hence the restricted polarisation is of type $(4)$.

Similarly, $F_{2}(z_2)=2Z[1]+2Z[2]+Z[3]$ and $F_{2}(4)=2D_4[1]+2D_4[2]-D_4[3]$ shows that $f_{2}$ is an embedding and 
since $\omega(2Z'[1]+2Z'[2]+Z'[3],2D_4[1]+2D_4[2]-D_4[3])=4$ the restricted polarisation is also of type $(4)$.

Lastly, since $F_{3}(z_3)=2Z[2]+Z[3]$ and $F_{3}(4)=-2D_4[1]+D_4[3]$ are linearly independent  $f_{3}$ is an embedding and one checks that the restricted polarisation is of type $(4)$.
\end{proof}

\begin{lem}\label{lem:kerz4}
Let $Z \in\cZ_4$ be a period matrix as in 
Lemma \ref{lem:z4}. The kernel of the addition map \[f_{1}+f_{2}+f_{3}:E_1\times E_2\times E_3\lra A_{Z,D_4}\] is given by sixteen 2-torsion points generated by the images of the following points \[\{(\tfrac{z_1}{2},\tfrac{z_2}{2},0),(0,\tfrac{z_2}{2},\tfrac{z_3}{2}),(2,2,0),(0,2,2)\}.\]
\end{lem}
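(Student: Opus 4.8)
The plan is to proceed in three stages: confirm that each of the four listed points lies in the kernel by pushing it through the analytic representation $F=F_1+F_2+F_3$ and landing in $\Lambda$; confirm that the four points generate a subgroup of order $16$; and then invoke the degree of the addition map to conclude that this subgroup is the whole kernel. Throughout, a point of $E_1\times E_2\times E_3$ lies in $\ker(f_1+f_2+f_3)$ exactly when $F$ sends one of its representatives into $\Lambda$. Since $\Lambda_{E_i}$ is generated by $z_i$ and $4$, the half-periods $\tfrac{z_i}{2}$ and $2$ are $2$-torsion on $E_i$, so all four listed points are $2$-torsion on the product.

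First I would use the linearity of $F$ together with the lattice expressions already recorded in the proof of Lemma \ref{lem:z4}, namely $F_1(z_1)=2Z[1]+Z[3]$, $F_2(z_2)=2Z[1]+2Z[2]+Z[3]$, $F_3(z_3)=2Z[2]+Z[3]$, and the analogous formulas for $F_i(4)$ in terms of the $D_4[k]$. Writing each listed point as a combination of half-periods in the three factors, its image under $F$ becomes half the sum of the corresponding lattice vectors, which I expect to simplify to $2Z[1]+Z[2]+Z[3]$, $Z[1]+2Z[2]+Z[3]$, $D_4[1]$ and $D_4[2]$ respectively. Each of these is an integral combination of the columns of $[Z\ D_4]$, hence lies in $\Lambda$, so all four points belong to the kernel.

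Next I would verify independence: written in the $\ZZ_2$-basis $\{\tfrac{z_i}{2},2\}$ of each $E_i[2]$, the four generators each possess a coordinate in which they are the unique nonzero entry, so together they span a copy of $\ZZ_2^4$ of order $16$. Finally, the addition map $f_1+f_2+f_3$ is an isogeny of degree $16$; this degree can be taken from Lemma \ref{lem:psi}, or established directly by noting that the six vectors $F_i(z_i),F_i(4)$ form, in the basis $\{Z[k],D_4[k]\}$ of $\Lambda$, a block-diagonal integer matrix with two $3\times3$ blocks each of determinant $4$, so that $[\Lambda:F(\Lambda_{E_1}\times\Lambda_{E_2}\times\Lambda_{E_3})]=16$. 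Since the kernel then has exactly $16$ elements and already contains a subgroup of that size, the two coincide; in particular every element of the kernel is $2$-torsion.

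The one step that requires care is the degree count. The membership check only shows $\ker(f_1+f_2+f_3)\supseteq\ZZ_2^4$, so without an independent computation of $|\ker|$ one could not rule out a larger kernel. I would therefore treat the index computation (or the citation of Lemma \ref{lem:psi}) as the genuine crux, the other two stages being direct bookkeeping once the identities of Lemma \ref{lem:z4} are in hand.
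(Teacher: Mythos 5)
Your proposal is correct and follows the same overall structure as the paper's proof: check that the four listed points map into $\Lambda$ under $F_1+F_2+F_3$, note that they generate a group of order $16$, and conclude with a degree count. Your membership computations are right; in fact they correct a small typo in the paper, which records $F_1(2)+F_2(2)=(0,1,0)=D_4[2]$ and $F_2(2)+F_3(2)=D_4[1]$ with the two outputs swapped — the correct identifications are $F_1(2)+F_2(2)=(1,0,0)=D_4[1]$ and $F_2(2)+F_3(2)=(0,1,0)=D_4[2]$, as you have them (either way both vectors lie in $\Lambda$, so membership is unaffected). Your independence argument for the four generators is also sound: each has a coordinate in the $\ZZ_2$-basis $\{\tfrac{z_i}{2},2\}$ of $E_1[2]\times E_2[2]\times E_3[2]$ in which it is the unique nonzero entry.

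The divergence is at the step you yourself flagged as the crux, the degree count, and there one of your two suggested routes fails. Citing Lemma \ref{lem:psi} is not legitimate: that lemma concerns the addition map $E_\sigma\times E_\tau\times E_{\sigma\tau}\to P(\tC/H)$ inside $J\tC$ for a non-isotropic Klein covering, whereas $E_1,E_2,E_3$ and $A_{Z,D_4}$ here are abstract objects built from a period matrix, with no identification yet with any Prym datum — producing such an identification is precisely the goal of Section 5, so invoking Lemma \ref{lem:psi} would be inapplicable or circular. Your alternative, the block-diagonal lattice index computation, is valid: expressing $F_i(z_i),F_i(4)$ in the basis $\{Z[k],D_4[k]\}$ gives two $3\times3$ integer blocks each of determinant $4$, hence index $16$; finiteness of the index also shows the map is an isogeny, which the paper establishes separately via surjectivity of the analytic representation. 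The paper instead derives the degree from polarizations: Lemma \ref{lem:z4} shows each restricted polarization has type $(4)$, so the addition map is a polarised isogeny from a $(4,4,4)$- to a $(1,1,4)$-polarised threefold, forcing degree $64/4=16$. Both arguments work; the paper's is shorter given what Lemma \ref{lem:z4} already provides, while yours is more self-contained. With the index computation substituted for the Lemma \ref{lem:psi} citation, your proof is complete.
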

\begin{proof}
Since some linear combinations of $F_{i}(x_i)$ give the basis vectors of $\CC^3$ we get that the addition map is an isogeny. By construction, it is a polarised isogeny between a $(4,4,4)$ and a $(1,1,4)$ polarised threefold, hence its degree equals 16. Now, it is enough to show that the following 2-torsion points lie in the kernel, but this is a straightforward computation. In particular, one checks that  $F_{1}(2)+F_{2}(2)=(0,1,0)=D_4[2]\in\Lambda$, $F_{2}(2)+F_{3}(2)=D_4[1]\in\Lambda$
and $F_{1}(\frac{z_1}{2})+F_{2}(\frac{z_2}{2})=2Z[1]+Z[2]+Z[3]\in\Lambda$, $F_{2}(\frac{z_2}{2})+F_{3}(\frac{z_3}{2})=Z[1]+2Z[2]+Z[3]\in\Lambda$.
\end{proof}

We define the map ${\widetilde{\Phi}}(z_1,z_2,z_3) = Z \in \cZ_4 $ as in Lemma \ref{lem:z4},  for any $(z_1,z_2,z_3) \in \ch_1^3$ and consider the following diagram
\begin{equation}\label{diag:non-iso} 
\xymatrix@R=1cm@C=1.5cm{
 \ch_1^3 \ar[r]^(.45){\widetilde{\Phi}} \ar[d]_{\pi_{(4)}^{\times3}}  & \cZ_4\subset \ch_3 \ar[d]^(.4){\pi_{(1,1,4)}}  \\
 \cA_{1}[2]^{(3)} \ar@{-->}[r]^(.5){\Phi} &  \overline{\Pr^{ni}(\cR_2^{ni})}
 }
\end{equation}
We have to take the closure of the image of the Prym map in $\cB^{ni}$, since not every
triple of elliptic curves will induce 6 points on $\PP^1$ that enable us to construct the Prym variety. 

\begin{lem}\label{lem:z4inv}
The map $\pi_{(4)}^{\times3}$ in Diagram \ref{diag:non-iso} is surjective. Moreover, $\widetilde{\Phi}$ is a lift of $\Phi$, so that Diagram \ref{diag:non-iso} commutes on the preimage of $\cU$. 
\end{lem}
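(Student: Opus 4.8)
The plan is to establish the two assertions separately, in each case reducing to the explicit lattice data of Lemmas \ref{lem:z4} and \ref{lem:kerz4}.

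For surjectivity of $\pi_{(4)}^{\times 3}$, it suffices to treat a single factor $\pi_{(4)}:\ch_1\ra\cA_1[2]$, since a product of surjections is surjective. I would identify $\pi_{(4)}$ with the classical uniformisation of the level-$2$ moduli space. By construction $\pi_{(4)}(z)$ is the curve $\CC/(\ZZ z+4\ZZ)$ together with the level structure $\lambda$ determined by the basis $\{z,4\}$, say $\tfrac{z}{2}\mapsto(1,0)$ and $2\mapsto(0,1)$. Scaling the lattice by $\tfrac14$ gives an isomorphism onto $\CC/(\ZZ\tau+\ZZ)$ with $\tau=\tfrac{z}{4}$, carrying $\lambda$ to the level structure of the basis $\{\tau,1\}$; thus $\pi_{(4)}$ agrees, after the substitution $z=4\tau$, with the standard surjection $\ch_1\twoheadrightarrow\cA_1[2]$. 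Conversely, given any $(E,\lambda)$ I would pick a lattice basis $\{\omega_1,\omega_2\}$ with $\tfrac{\omega_1}{2},\tfrac{\omega_2}{2}$ representing $\lambda^{-1}(1,0),\lambda^{-1}(0,1)$, rescale to the normal form $\{z,4\}$, and negate $\omega_1$ if necessary (which fixes its class modulo the $2$-torsion) to arrange $\Ima z>0$; this exhibits $(E,\lambda)$ as $\pi_{(4)}(z)$.

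For the lifting statement I would show that, on $(\pi_{(4)}^{\times 3})^{-1}(\cU)$, the polarised threefold $\pi_{(1,1,4)}(\widetilde\Phi(z_1,z_2,z_3))=A_{Z,D_4}$ is isomorphic to $\Phi$ of the corresponding triple, that is to $(E_1\times E_2\times E_3)/K$. Lemma \ref{lem:z4} provides a polarised isogeny $f_1+f_2+f_3:E_1\times E_2\times E_3\ra A_{Z,D_4}$ and Lemma \ref{lem:kerz4} identifies its kernel as the order-$16$ group generated by $(\tfrac{z_1}{2},\tfrac{z_2}{2},0)$, $(0,\tfrac{z_2}{2},\tfrac{z_3}{2})$, $(2,2,0)$, $(0,2,2)$. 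The decisive step is to check that this kernel equals the subgroup $K$ of Proposition \ref{prop:PhiPsi}. Writing $E_i[2]$ in the basis $\{\tfrac{z_i}{2},2\}$, the map $\lambda_1+\lambda_2+\lambda_3$ becomes $(a_1,b_1,a_2,b_2,a_3,b_3)\mapsto(a_1+a_2+a_3,\,b_1+b_2+b_3)$ over $\FF_2$, so $K$ is cut out by two independent linear equations and has order $16$. A direct substitution shows each of the four generators above lies in $K$, and since these four vectors are independent in $\FF_2^6$ they span all of $K$; hence $\ker(f_1+f_2+f_3)=K$. The resulting isomorphism $A_{Z,D_4}\cong(E_1\times E_2\times E_3)/K$ respects the polarisations, because both sides carry the $(1,1,4)$-polarisation descending from four times the principal product polarisation through the common isotropic kernel $K$. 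This yields $\pi_{(1,1,4)}\circ\widetilde\Phi=\Phi\circ\pi_{(4)}^{\times 3}$ over $\cU$.

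The main obstacle is not conceptual but notational: I must fix the level-$2$ convention so that the canonical structure attached to the normal form $\{z_i,4\}$ in $\pi_{(4)}$ coincides with the $\lambda_i$ used to define $K$, the point being that presenting all three curves in the aligned form $\ZZ z_i+4\ZZ$ is exactly what forces $\ker(f_1+f_2+f_3)=K$. I would also record in passing that $\widetilde\Phi$ is well defined into $\ch_3$: the leading principal minors of $\Ima Z$ are $\tfrac{z_2'+z_3'}{4}$, $\tfrac{1}{16}(z_1'z_2'+z_1'z_3'+z_2'z_3')$ and $\tfrac14 z_1'z_2'z_3'$, all positive once each $z_i'>0$, so $\Ima Z$ is positive definite and $Z\in\cZ_4\subset\ch_3$.
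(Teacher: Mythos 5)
Your proposal is correct and takes essentially the same route as the paper: both arguments get surjectivity from the standard uniformisation of $\cA_1[2]$ by $\ch_1$, normalise the periods so that the level-$2$ structures send $[\tfrac{z_i}{2}]\mapsto(1,0)$ and the other half-period to $(0,1)$, and then match the kernel of $f_1+f_2+f_3$ from Lemma \ref{lem:kerz4} with the group $K$ of Proposition \ref{prop:PhiPsi} — your explicit $\FF_2$-linear verification of this last identification simply fills in a step the paper asserts implicitly. One cosmetic remark: $\det(\Ima Z)=\tfrac{1}{16}z_1'z_2'z_3'$ rather than $\tfrac14 z_1'z_2'z_3'$ (a slip you inherited from Lemma \ref{lem:z4}), but only positivity is needed, so nothing is affected.
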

\begin{proof}
The surjectivity of $\pi_{(4)}^{\times3}$ follows from the fact that 
$\ch_1$ is the universal cover of the moduli space $\cA_1[2]$ of elliptic curves with a level-2 structure (see \cite[Section 8.3.1]{bl}).
Let $((E_1, \lambda_1), (E_2, \lambda_2), (E_3, \lambda_3))$ be an element in $\cU$ and $V_4=\{(0,0),(1,0),(0,1),(1,1)\}$. 
For each elliptic curve $E_i$ one can choose periods, $[z_i \ 1]$ with $z_i\in \ch_1$ such that
$\lambda_i([\tfrac{1}{2}])=(0,1),\ \lambda_i([\tfrac{z_i}{2}])=(1,0)$,
where the bracket denotes the class in the quotient $E_i = \CC/\Lambda_{E_i}$. This condition 
will guarantee that $\widetilde \Phi$ is a lift of $\Phi$. Then, by Lemma
\ref{lem:kerz4}, the kernel 
$K$ of the isogeny $\varphi$ is generated by the images of
\[\{(\tfrac{z_1}{2},\tfrac{z_2}{2},0),(0,\tfrac{z_2}{2},\tfrac{z_3}{2}),(2,2,0),(0,2,2)\}.\]
and $\Phi(E_1,E_2,E_3)=(E_1\times E_2\times E_3)/K$.

In particular, according to Lemma \ref{lem:z4}, one can choose an analytic representation of $\varphi$ to be $(F_{1}+F_{2}+F_{3})$.
Observe that $\widetilde{\Phi}$ is a well defined lift, since the Siegel spaces are simply connected and by construction,  $\Phi\circ\pi_{(4)}^{\times3}=\pi_{D_4}\circ\widetilde{\Phi}$.
\end{proof}

\begin{thm}\label{thmz4}
The locus 
\[\cZ_4=\{Z=(z_{ij})\in\ch_3: -2z_{11}=z_{31}, z_{21}=z_{11}+z_{22}-\frac{z_{33}}{4}, -2z_{22}=z_{32}\}\]
is irreducible of dimension 3. The image of the map $\pi_{D_4}|_{\cZ_4}$ is the closure of the image of $\Pr^{ni}$. In particular, we have found period matrices for the threefolds in the image of $\Pr^{ni}$.
\end{thm}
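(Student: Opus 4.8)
The plan is to reduce everything to the explicit linear parametrisation $\widetilde{\Phi}$ of Lemma \ref{lem:z4}. For the first assertion, note that $\cZ_4$ is the intersection with $\ch_3$ of the $3$-dimensional complex-linear subspace of symmetric matrices cut out by the three defining equations, with free parameters $z_{11},z_{22},z_{33}$. The map $\widetilde{\Phi}:(z_1,z_2,z_3)\mapsto Z$ is the restriction of the complex-linear isomorphism onto this subspace whose inverse is
\[z_1=z_{33}-4z_{11},\qquad z_2=4z_{11}+4z_{22}-z_{33},\qquad z_3=z_{33}-4z_{22}.\]
Hence it suffices to prove that $\widetilde{\Phi}$ is a bijection of $\ch_1^3$ onto $\cZ_4$, i.e. that for $Z$ in this linear subspace one has $\Ima Z>0$ if and only if $\Ima z_i>0$ for $i=1,2,3$. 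Granting this, $\cZ_4$ is biholomorphic to the connected complex manifold $\ch_1^3$, hence irreducible of dimension $3$.

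The equivalence of the two positivity conditions is the one genuinely computational point. Writing $z_i'=\Ima z_i$, Sylvester's criterion applied to $\Ima Z$ says that $\Ima Z>0$ is equivalent to positivity of its three leading principal minors, which a direct computation gives as
\[\tfrac14(z_2'+z_3'),\qquad \tfrac1{16}(z_1'z_2'+z_1'z_3'+z_2'z_3'),\qquad \tfrac14\,z_1'z_2'z_3'.\]
If all $z_i'>0$ these are manifestly positive, yielding $\widetilde{\Phi}(\ch_1^3)\subseteq\cZ_4$. For the converse I would also record the positivity of the remaining diagonal entries, $z_1'+z_2'>0$ and $z_1'+z_2'+z_3'>0$. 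Since $z_1'z_2'z_3'>0$, at most an even number of the $z_i'$ are negative; the two ``two–negative'' patterns containing a negative pair among $\{z_1',z_2'\}$ or $\{z_2',z_3'\}$ are killed at once by $z_1'+z_2'>0$ and $z_2'+z_3'>0$, and the remaining pattern $z_1',z_3'<0<z_2'$ is impossible because $z_2'>|z_1'|+|z_3'|$ forces $z_2'(|z_1'|+|z_3'|)>(|z_1'|+|z_3'|)^2\ge 4|z_1'z_3'|$, contradicting the positivity of the second minor. Thus $\Ima Z>0$ forces all $z_i'>0$, so $\widetilde{\Phi}$ is onto $\cZ_4$.

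For the image statement I would use the commutative Diagram \ref{diag:non-iso} of Lemma \ref{lem:z4inv}, namely $\pi_{D_4}\circ\widetilde{\Phi}=\Phi\circ\pi_{(4)}^{\times3}$. Since $\widetilde{\Phi}$ is onto $\cZ_4$ and $\pi_{(4)}^{\times3}$ is onto $\cA_1[2]^{(3)}$, this identifies $\pi_{D_4}(\cZ_4)$ with the image of the everywhere-defined construction $((E_i,\lambda_i))\mapsto(E_1\times E_2\times E_3)/K$ over all of $\cA_1[2]^{(3)}$; call this extended morphism $\overline{\Phi}$. On the open dense locus $\cU=\cA_1[2]^{(3)}\setminus\cD$ it is exactly $\Phi$, whose image is ${\Pr}^{ni}(\cR_2^{ni})$ by Proposition \ref{prop:PhiPsi} (as $\Psi$ is surjective and $\Pr^{ni}\circ\Psi=\Phi$), so ${\Pr}^{ni}(\cR_2^{ni})\subseteq\pi_{D_4}(\cZ_4)$. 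Conversely, the preimage $(\pi_{(4)}^{\times3})^{-1}(\cU)$ is dense in $\ch_1^3$ and $\pi_{D_4}\circ\widetilde{\Phi}$ is continuous, so the image of all of $\ch_1^3$ lies in the closure of the image of this dense set, giving $\pi_{D_4}(\cZ_4)\subseteq\overline{{\Pr}^{ni}(\cR_2^{ni})}$.

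The main obstacle is to upgrade these two inclusions to the asserted equality $\pi_{D_4}(\cZ_4)=\overline{{\Pr}^{ni}(\cR_2^{ni})}$, i.e. to show that $\Ima\overline{\Phi}=\pi_{D_4}(\cZ_4)$ is actually \emph{closed} in $\cA_3^{(1,1,4)}$; since it already contains ${\Pr}^{ni}(\cR_2^{ni})$ and sits inside its closure, closedness would force equality. To establish it I would pass to projective compactifications of $\cA_1[2]^{(3)}$ and of $\cA_3^{(1,1,4)}$ and extend $\overline{\Phi}$, then analyse limits: if a sequence in $\cU$ converges in $\cA_1[2]^{(3)}$ to a point of the diagonal $\cD$, the limit is captured by $\overline{\Phi}$ on $\cD$; whereas if one of the elliptic factors $E_i$ degenerates (a cusp of the corresponding $\ch_1$, equivalently $z_i'\to\infty$ or $z_i'\to 0$), the quotient $(E_1\times E_2\times E_3)/K$ degenerates to a semiabelian variety and the image leaves $\cA_3^{(1,1,4)}$ entirely. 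Hence every interior limit point already lies in $\Ima\overline{\Phi}$, so $\Ima\overline{\Phi}$ is closed and equals $\overline{{\Pr}^{ni}(\cR_2^{ni})}$, with $\cD$ contributing precisely the boundary $\overline{{\Pr}^{ni}(\cR_2^{ni})}\setminus{\Pr}^{ni}(\cR_2^{ni})$. Verifying that an elliptic degeneration forces a semiabelian (non-abelian) limit, i.e. compatibility of the product-and-quotient construction with the toroidal boundary, is the technical heart of this step. The final clause of the theorem is then immediate: the columns of the matrix in Lemma \ref{lem:z4}, with $Z\in\cZ_4$, are explicit period matrices for the threefolds in the image of $\Pr^{ni}$.
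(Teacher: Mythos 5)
Your proposal follows the same route as the paper's proof --- Lemma \ref{lem:z4}, Lemma \ref{lem:kerz4}, Lemma \ref{lem:z4inv} and the commutativity of Diagram \ref{diag:non-iso} --- but it is considerably more careful, and the extra care is genuinely needed. The paper's own proof is two sentences long: linearity and independence of the defining equations give irreducibility and the dimension, and ``the result follows from the commutativity of Diagram \ref{diag:non-iso}''. Your Sylvester-criterion argument showing that $\widetilde{\Phi}$ maps $\ch_1^3$ \emph{onto} $\cZ_4$ (i.e.\ that $\Ima Z>0$ forces $z_i'>0$ for all $i$) is exactly what the paper compresses into ``one can choose coordinates, such that each $Z\in\cZ_4$ is of the form satisfying Lemma \ref{lem:z4}''; without this surjectivity the inclusion $\pi_{D_4}(\cZ_4)\subseteq\overline{\Pr^{ni}(\cR_2^{ni})}$ would not follow, so this is a worthwhile addition, and your case analysis for the converse direction is correct. (One cosmetic point: the determinant of $\Ima Z$ is $\tfrac{1}{16}z_1'z_2'z_3'$, not $\tfrac14 z_1'z_2'z_3'$ --- a constant slip inherited from Lemma \ref{lem:z4}; it is harmless since only positivity matters.)

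Your second observation --- that the diagram only yields the two inclusions $\Pr^{ni}(\cR_2^{ni})\subseteq\pi_{D_4}(\cZ_4)\subseteq\overline{\Pr^{ni}(\cR_2^{ni})}$, so that the asserted \emph{equality} with the closure additionally requires $\pi_{D_4}(\cZ_4)$ to be closed --- points at a real gap in the paper's one-line argument, not in yours; note that the isotropic analogue, Theorem \ref{thmz2}, is only stated as a density result, which is precisely what the two inclusions give. Where you and the paper differ is that you at least flag this and sketch a fix, though your toroidal-compactification sketch is heavier than necessary and, as you admit, its key compatibility is left unverified. A shorter completion: it suffices to prove that the everywhere-defined morphism $\overline{\Phi}\colon\cA_1[2]^{(3)}\to\cA_3^{(1,1,4)}$ is proper, since proper maps have closed image. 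If $A_n=\overline{\Phi}\left((E_i^n,\lambda_i^n)_{i=1,2,3}\right)$ converges to $(A,\Xi)$ in $\cA_3^{(1,1,4)}$, then the $E_i^n\subset A_n$ are one-dimensional abelian subvarieties through $0$ of fixed degree $4$ with respect to $\Xi_n$; by compactness of cycle spaces of bounded degree in the converging family they converge (along a subsequence) to an effective cycle through $0$, whose support is again an algebraic subgroup by continuity of the group law, and which cannot acquire rational components because abelian varieties contain no rational curves --- so the limit is again an elliptic curve of degree $4$, and the level-$2$ data converge as well. This makes precise your intuition that a degenerating elliptic factor forces $A_n$ to leave every compact subset of $\cA_3^{(1,1,4)}$, without any boundary analysis. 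Alternatively one can invoke the general fact that loci of this type (images of period domains of abelian varieties with prescribed morphism data) are closed algebraic subvarieties, cf.\ \cite{Guerra} or \cite{Bor}.
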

\begin{proof}
Since the equations defining $\cZ_4$ are linear and independent, $\cZ_4$ is irreducible of dimension 3 and one can choose coordinates, such that each $Z\in\cZ_4$ is of the form satisfying Lemma \ref{lem:z4}. 

The result follows from the commutativity of Diagram %\ref{non-iso-diag} and 
\ref{diag:non-iso}. 
\end{proof}

The isotropic case is quite similar.  Let $D_2=\diag(1,2,2)$ and consider the locus 
\[\cZ_2=\{Z=(z_{ij})\in\ch_3: z_{33}=2z_{31}, z_{32}=0, z_{22}=2z_{21}\}.\]

\begin{lem}\label{lem:z2}
Let $z_1,z_2,z_3\in\CC$.
Assume that $\Lambda$ is generated by the column vectors of
\[\left[
\arraycolsep=5pt\def\arraystretch{2}
\begin{array}{cccccc} 
\dfrac{2z_1+z_2+z_3}{4}&\dfrac{z_2}{2}&\dfrac{z_3}{2}&1&0&0\\
\dfrac{z_2}{2}&z_2&0&0&2&0\\
\dfrac{z_3}{2}&0&z_3&0&0&2\\
\end{array}\right]=[Z\ D_2],\]
where the first three vectors 
form a matrix $Z\in\cZ_2$.
Then the abelian threefold $A_{Z, D_4}$ contains three embedded elliptic curves $E_1, E_2, E_3$ given by the following lattices and analytic representations:
\begin{align*}
&\Lambda_{E_1}\text{ generated by }[z_1\ 2],\ F_{E_1}(x)=(\tfrac {x}{2},0,0)\in \CC^3,\\
&\Lambda_{E_2}\text{ generated by }[z_2\ 2],\ F_{E_2}(x)=(\tfrac {x}{2},x,0)\in \CC^3,\\
&\Lambda_{E_3}\text{ generated by }[z_3\ 2],\ F_{E_3}(x)=(\tfrac {x}{2},0,x)\in \CC^3.
\end{align*}
\end{lem}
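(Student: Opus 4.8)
The plan is to verify directly that the displayed lattice yields three embedded elliptic curves whose addition map is a polarised isogeny, exactly mirroring the structure of Lemma \ref{lem:z4}. First I would confirm that the $z_i$ have nonzero imaginary part, so that the $E_i$ are genuine elliptic curves: computing $\det(Z')$ from the matrix should again give a product of the $z_i'$ up to a constant (the factor controlled by the $(1,2,2)$-type), forcing each $z_i'\neq 0$. This initial nondegeneracy check is the same kind of determinant computation that opens the proof of Lemma \ref{lem:z4}.

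The core of the argument is, for each $i$, to exhibit the two generators of $\Lambda_{E_i}$, namely $F_{E_i}(z_i)$ and $F_{E_i}(2)$, as explicit integer combinations of the six column vectors of $[Z\ D_2]$. Concretely I would expand $F_{E_1}(z_1)=(\tfrac{z_1}{2},0,0)$ and check it equals the appropriate combination of the columns of $Z$ (using the relations $z_{33}=2z_{31}$, $z_{32}=0$, $z_{22}=2z_{21}$ defining $\cZ_2$), and similarly that $F_{E_1}(2)=(1,0,0)=D_2[1]\in\Lambda$; then repeat for $E_2$ and $E_3$. Once each pair of image vectors is shown to be a primitive pair in $\Lambda$, each $f_{E_i}$ is an embedding. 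The relations cutting out $\cZ_2$ are precisely what make these combinations land in the lattice, so the computation is forced but bookkeeping-heavy.

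Finally I would compute the restricted polarisation on each $E_i$ using the alternating form $\omega$ with $\omega(Z'[i],D_2[i])=d_i$. For each curve one evaluates $\omega$ on the two primitive generators found above; by analogy with Lemma \ref{lem:z4} each restricted polarisation should come out of type $(2)$ (consistent with the $D_2=\diag(1,2,2)$ target and the Prym being of type $(1,2,2)$ per Proposition \ref{iso-Prym}), and the conclusion that $f_{E_1}+f_{E_2}+f_{E_3}$ is a polarised isogeny then follows. The main obstacle I anticipate is purely clerical rather than conceptual: keeping the entries straight while substituting the three defining relations of $\cZ_2$, since the half-integer entries of $Z$ and the mixed $(1,2,2)$ diagonal make it easy to drop a factor of $2$ when matching $F_{E_i}(z_i)$ and $F_{E_i}(2)$ against the columns. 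I would guard against this by recording each membership as an explicit equation such as $F_{E_2}(z_2)=Z[2]$ and $F_{E_2}(2)=D_2[2]$, exactly as the companion Lemmas \ref{lem:z4} and \ref{lem:kerz4} do, and then the type-$(2)$ claim reduces to three one-line evaluations of $\omega$.
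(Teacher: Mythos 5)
Your plan is exactly the paper's proof: it opens with the determinant computation $\det(Z')=\tfrac{1}{2}z_1'z_2'z_3'$, then realises $F_{E_i}(z_i)$ and $F_{E_i}(2)$ as explicit integer combinations of the columns of $[Z\ D_2]$, and closes with evaluations of $\omega$ on these pairs giving restricted polarisations of type $(2)$. (The polarised-isogeny conclusion you append is not part of this statement; the paper treats the addition map and its kernel in the following lemma.)

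However, two of your sample memberships are off, and they are precisely the factor-of-two slips you said you would guard against. First, $F_{E_2}(2)=(1,2,0)$, so the correct relation is $F_{E_2}(2)=D_2[1]+D_2[2]$, not $D_2[2]=(0,2,0)$; likewise $F_{E_3}(2)=D_2[1]+D_2[3]$. Second, and more seriously, your planned check for $E_1$ cannot succeed as literally stated: the lattice vectors of the form $(\ast,0,0)$ in $\Lambda$ are exactly $(kz_1+d,0,0)$ with $k,d\in\ZZ$ (the relevant combination being $2Z[1]-Z[2]-Z[3]=(z_1,0,0)$), so $F_{E_1}(z_1)=(\tfrac{z_1}{2},0,0)$ generically does not lie in $\Lambda$ at all. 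This is a normalisation inconsistency in the lemma itself — the paper's own proof silently works with $2Z[1]-Z[2]-Z[3]=(z_1,0,0)$, which is $F_{E_1}(2z_1)$, not $F_{E_1}(z_1)$ — and the fix is to rescale the first factor: take $F_{E_1}(x)=(x,0,0)$ with $\Lambda_{E_1}$ generated by $[z_1\ 1]$, or equivalently keep $F_{E_1}(x)=(\tfrac{x}{2},0,0)$ but let $\Lambda_{E_1}$ be generated by $[2z_1\ 2]$. With that correction the pair $2Z[1]-Z[2]-Z[3]$, $D_2[1]$ is primitive in $\Lambda$, one gets $\omega(2Z'[1]-Z'[2]-Z'[3],D_2[1])=2$, and the rest of your argument goes through verbatim.
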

\begin{proof}
%The proof is similar to the proof of Lemma \ref{lem:z4}.
Firstly, $\det(Z')=\frac12z_1'z_2'z_3'$ shows that elliptic curves are well defined.
Since $F_{E_1}(z_1)=2Z[1]-Z[2]-Z[3]$ and $F_{E_1}(2)=D_2[1]$ are primitive independent vectors in $\Lambda$, and $\omega(2Z'[1]-Z'[2]-Z'[3],D_2[1])=2$, $E_1$ is embedded in $A_{[Z,D_2]}$ and the restricted polarisation is of type $(2)$.
Analogously, $F_{E_2}(z_2)=Z[2]$ and $F_{E_2}(2)=D_2[1]+D_2[2]$ are linearly independent in $\Lambda$
as well as $F_{E_3}(z_3)=Z[3]$ and $F_{E_3}(2)=D_2[1]+D_2[3]$. Then $E_2$ are $E_3$ are embedded in $A_{Z, D_4}$ and it is 
no difficult to check that the restricted polarisations are both of type $(2)$.
\end{proof}

\begin{lem}
The kernel of the addition map is given by $\{(0,0,0),(\tfrac{z_1}{2},\tfrac{z_2}{2},\tfrac{z_3}{2})\}.$
\end{lem}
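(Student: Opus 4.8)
The plan is to follow the proof of Lemma~\ref{lem:kerz4}: first determine the order of the kernel by a polarisation-degree count, and then exhibit its unique nonzero element by a direct lattice computation. The map in question is the addition map $\varphi=f_{E_1}+f_{E_2}+f_{E_3}\colon E_1\times E_2\times E_3\ra A_{Z,D_2}$, with analytic representation $F=F_{E_1}+F_{E_2}+F_{E_3}$. First I would note that the three analytic representations of Lemma~\ref{lem:z2} together span $\CC^3$, so $\varphi$ is an isogeny. By that same lemma each factor $E_i$ carries the induced polarisation of type $(2)$, so $\varphi$ is a polarised isogeny from a $(2,2,2)$-polarised threefold onto the $(1,2,2)$-polarised $A_{Z,D_2}$; exactly as in Lemma~\ref{lem:kerz4} its degree is therefore $\tfrac{2\cdot2\cdot2}{1\cdot2\cdot2}=2$. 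Hence $\Ker\varphi$ is a group of order $2$, so it consists of $0$ together with a single nonzero $2$-torsion point, and the whole content of the lemma is to identify that point.

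Next I would check that $(\tfrac{z_1}{2},\tfrac{z_2}{2},\tfrac{z_3}{2})$ is that element. On each $E_i=\CC/(\ZZ z_i+2\ZZ)$ the class $\tfrac{z_i}{2}$ is a nonzero $2$-torsion point, so the triple is a nonzero $2$-torsion point with all coordinates nonvanishing, in agreement with Lemma~\ref{lem:isoisogeny}. Substituting it into the analytic representations of Lemma~\ref{lem:z2} one computes that $F(\tfrac{z_1}{2},\tfrac{z_2}{2},\tfrac{z_3}{2})$ equals the first column $Z[1]$ of the period matrix and hence lies in $\Lambda$; this is the only real calculation in the proof, and it is completely parallel to the identity $F_1(\tfrac{z_1}{2})+F_2(\tfrac{z_2}{2})=2Z[1]+Z[2]+Z[3]$ used in the proof of Lemma~\ref{lem:kerz4}. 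Thus $(\tfrac{z_1}{2},\tfrac{z_2}{2},\tfrac{z_3}{2})\in\Ker\varphi$, and by the order count it is the unique nonzero element, which proves the claim.

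There is no genuine obstacle here; the only points requiring care are bookkeeping. The degree count must land on exactly $2$, so that producing a single nonzero kernel element already determines the full kernel, and the lattice-membership computation must be carried out with the correct half-periods $\tfrac{z_i}{2}$ and the analytic representations $F_{E_i}$ of Lemma~\ref{lem:z2}. Once these two routine steps are in place the statement is immediate.
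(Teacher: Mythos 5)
Your proof is correct and follows essentially the same route as the paper's: both establish that the addition map is an isogeny via its analytic representation, deduce degree $2$ from it being a polarised isogeny between a $(2,2,2)$- and a $(1,2,2)$-polarised threefold, and then exhibit the nonzero kernel element by checking $F_{E_1}(\tfrac{z_1}{2})+F_{E_2}(\tfrac{z_2}{2})+F_{E_3}(\tfrac{z_3}{2})=Z[1]\in\Lambda$. The only additions beyond the paper's argument are the (harmless) cross-check against Lemma \ref{lem:isoisogeny} that all three coordinates of the kernel element are nonzero.
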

\begin{proof}
The analytic representation of the addition map is surjective, hence the map is an isogeny. By construction, it is a polarised isogeny between a $(2,2,2)$ and a $(1,2,2)$ polarised threefold, hence it is of degree 2. Clearly, $F_{E_1}(\frac{z_1}{2})+F_{E_2}(\frac{z_2}{2})+F_{E_3}(\frac{z_3}{2})=Z[1]\in\Lambda$,  which proves the lemma. 
\end{proof}

In order to make computations straightforward, we do not use the characterisation of Prym as
the quotient of the Jacobian $JC_j$, but the fact that $P(\tC/H)=E_{j\s}\boxplus E_{j\t}\boxplus
E_{j\s\t}$, for some elliptic curves. Let $\cA_{1,\ZZ_2}$ denote the moduli space of elliptic curves with a
chosen non-zero  $2$-torsion point. Using Lemma \ref{lem:isoisogeny}, we define
$\varPhi:\cA_{1,\ZZ_2}^3\lra\overline{\Pr^{iso}(\cR^{iso})}$, by
\[\varPhi((E_1,x_1),(E_2,x_2),(E_3,x_3))=E_1\times E_2\times E_3/\{(0,0,0),(x_1,x_2,x_3)\}\]
and consider the diagram
\begin{equation} \label{diag:iso-case} 
\xymatrix@R=1cm@C=1.5cm{
 \ch_1^3 \ar[r]^(.45){\widetilde\varPhi } \ar[d]_{\pi_{(2)}^{\times3}}  & \cZ_2\subset \ch_3 \ar[d]^(.4){\pi_{(1,2,2)}}  \\
\cA_{1,\ZZ_2}^{(3)}  \ar[r]^(.5){\varPhi} &  \overline{\Pr^{iso}(\cR_2^{iso})}
 }
\end{equation}
where $\widetilde\varPhi (z_1,z_2,z_3)= Z \in \cZ_2$ as in Lemma \ref{lem:z2}. We take the closure of the image of the Prym map in $B^{iso}$ because not every triple of elements in $\cA_{1,\ZZ_2}$ gives rise to a non-hyperelliptic curve $C_j$ like in Theorem \ref{iso-thm}
\begin{lem}
The map $\pi^{\times3}_{(2)}$ in Diagram \ref{diag:iso-case} is surjective. 
Moreover, $\widetilde{\varPhi}$ is a lift of $\varPhi$ so that the diagram commutes.
\end{lem}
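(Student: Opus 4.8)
The plan is to mirror the proof of Lemma \ref{lem:z4inv}, replacing the level-$2$ structure used there by the single marked nonzero $2$-torsion point that parametrises $\cA_{1,\ZZ_2}$. First I would dispatch the surjectivity of $\pi_{(2)}^{\times3}$: since $\ch_1$ is the universal cover of $\cA_{1,\ZZ_2}$---every pair $(E,x)$ of an elliptic curve with a marked nonzero $2$-torsion point admits a uniformisation by some $z\in\ch_1$, cf.\ \cite[Section 8.3.1]{bl}---the map $\pi_{(2)}\colon\ch_1\to\cA_{1,\ZZ_2}$ is surjective, whence so is the product map on $\ch_1^3$.

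For the commutativity, the key is to fix periods on each factor that are adapted to the marking. Given $(E_i,x_i)\in\cA_{1,\ZZ_2}$, I would choose $z_i\in\ch_1$ so that $\Lambda_{E_i}$ is generated by $[z_i\ 2]$ and the marked point is $x_i=[\tfrac{z_i}{2}]$. Concretely, I would take a basis of the period lattice whose first vector represents $x_i$, rescale $\CC$ so that the second basis vector becomes $2$, and flip orientation if necessary so that $\Ima z_i>0$. This is exactly the normalisation that turns $\widetilde\varPhi$ into a genuine lift of $\varPhi$: with these periods, $(z_1,z_2,z_3)$ is sent by $\widetilde\varPhi$ to the matrix $Z\in\cZ_2$ of Lemma \ref{lem:z2}, so that $A_{Z,D_2}$ is the threefold built from $E_1\times E_2\times E_3$.

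I would then invoke the kernel computation preceding Diagram \ref{diag:iso-case}, which identifies $\ker(f_{E_1}+f_{E_2}+f_{E_3})$ with $\{(0,0,0),(\tfrac{z_1}{2},\tfrac{z_2}{2},\tfrac{z_3}{2})\}$. Under the normalisation above this subgroup equals $\{(0,0,0),(x_1,x_2,x_3)\}$, hence
\[
A_{Z,D_2}\;\cong\;(E_1\times E_2\times E_3)/\{(0,0,0),(x_1,x_2,x_3)\}\;=\;\varPhi\bigl((E_1,x_1),(E_2,x_2),(E_3,x_3)\bigr),
\]
which is precisely the relation $\pi_{(1,2,2)}\circ\widetilde\varPhi=\varPhi\circ\pi_{(2)}^{\times3}$. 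That $\widetilde\varPhi$ is a well-defined lift then follows from the simple connectedness of $\ch_1^3$ and $\ch_3$.

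The surjectivity and the kernel identification are routine (the latter is already available), so the one step deserving genuine care---and what I regard as the main obstacle---is the period normalisation. Unlike the non-isotropic setting of Lemma \ref{lem:z4inv}, where a full level-$2$ structure rigidifies the choice of $z_i$, here only one nonzero $2$-torsion point is marked; I must therefore check that this point can always be placed at $[\tfrac{z_i}{2}]$ and that the resulting lift matches $\varPhi$ exactly, and not merely up to an automorphism of each $E_i$.
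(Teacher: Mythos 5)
Your proposal is correct and follows essentially the same route as the paper: surjectivity from $\ch_1$ being the universal cover of $\cA_{1,\ZZ_2}$, and commutativity by normalising the periods so that $\Lambda_{E_i}$ is generated by $[z_i\ 2]$ with the marked point $x_i=[\tfrac{z_i}{2}]$, then matching the kernel $\{(0,0,0),(\tfrac{z_1}{2},\tfrac{z_2}{2},\tfrac{z_3}{2})\}$ with $\{(0,0,0),(x_1,x_2,x_3)\}$. The normalisation step you flag as the main obstacle is unproblematic (the $\mathrm{SL}_2(\ZZ)$-action permutes the three nonzero $2$-torsion classes transitively, and equality in the moduli space only requires agreement up to isomorphism of polarised abelian threefolds), and it is exactly the point the paper's own proof rests on.
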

\begin{proof}
The surjectivity of $\pi^{\times3}_{(2)}$ follows from the fact that $\ch_1$ is a universal cover 
for the moduli of  elliptic curves with a chosen point. In order to make the diagram commutative, for
each elliptic curve $E_i$
we choose periods $[z_i \ 1]$, such that the chosen point is $x_i=[\tfrac{z_i}{2}]$, where the bracket denotes the image of $\tfrac{z_i}{2}$ in the quotient $\CC/\Lambda_{E_i}$. 
\end{proof}

\begin{thm}\label{thmz2}
 The image of the Prym map $\Pr^{iso}$ is dense in the image of the map $\pi_{D_2}(\cZ_2)$.
\end{thm}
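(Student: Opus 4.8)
The plan is to mirror the proof of the non-isotropic case (Theorem \ref{thmz4}) and reduce everything to the commutativity of Diagram \ref{diag:iso-case}. First I would record that $\cZ_2$ is cut out of $\ch_3$ by the three \emph{linear} equations $z_{33}=2z_{31}$, $z_{32}=0$, $z_{22}=2z_{21}$, hence is irreducible of dimension $3$. The key technical point is that $\widetilde\varPhi$ surjects onto $\cZ_2$: given $Z\in\cZ_2$ with $\Ima Z>0$, the shape of the matrix in Lemma \ref{lem:z2} lets me read off $z_2=z_{22}$, $z_3=z_{33}$ and $z_1=2z_{11}-\tfrac{z_2+z_3}{2}$. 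Positivity of the imaginary parts is then automatic: $\Ima z_2$ and $\Ima z_3$ are diagonal entries of $Z'>0$, while the identity $\det(Z')=\tfrac12 z_1'z_2'z_3'$ from the proof of Lemma \ref{lem:z2} forces $z_1'>0$ once $z_2',z_3'>0$. Thus $\widetilde\varPhi(\ch_1^3)=\cZ_2$.

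Next I would invoke the preceding lemma, which asserts that $\pi_{(2)}^{\times3}$ is surjective and that Diagram \ref{diag:iso-case} commutes, i.e. $\pi_{D_2}\circ\widetilde\varPhi=\varPhi\circ\pi_{(2)}^{\times3}$. Evaluating both sides on $\ch_1^3$ and using the two surjectivities yields $\pi_{D_2}(\cZ_2)=\pi_{D_2}(\widetilde\varPhi(\ch_1^3))=\varPhi(\cA_{1,\ZZ_2}^{(3)})$. So the image of $\pi_{D_2}|_{\cZ_2}$ coincides exactly with the image of the modular map $\varPhi$, and it only remains to see that the honest Pryms are dense in $\varPhi(\cA_{1,\ZZ_2}^{(3)})$.

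For this, let $\cG\subseteq\cA_{1,\ZZ_2}^{(3)}$ be the locus of triples that genuinely arise from an isotropic Klein covering, that is, those for which the reconstructed curve $C_j$ of Theorem \ref{iso-thm} is non-hyperelliptic. By Proposition \ref{iso-Prym}, Lemma \ref{lem:isoisogeny} and the defining formula of $\varPhi$, one has $\Pr^{iso}(\cR_2^{iso})=\varPhi(\cG)$. By Theorem \ref{iso-thm} the complement of $\cG$, where the three pairs of branch points collide or $C_j$ degenerates to a hyperelliptic curve, is a proper closed subset of the irreducible variety $\cA_{1,\ZZ_2}^{(3)}$, so $\cG$ is dense. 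Since $\varPhi$ is a morphism, $\varPhi(\cA_{1,\ZZ_2}^{(3)})=\varPhi(\overline{\cG})\subseteq\overline{\varPhi(\cG)}$; combined with the previous paragraph this gives that $\varPhi(\cG)=\Pr^{iso}(\cR_2^{iso})$ is dense in $\pi_{D_2}(\cZ_2)$, which is the assertion.

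The linear algebra and the positivity check of the first paragraph are routine, being exact analogues of Lemma \ref{lem:z4inv} and Theorem \ref{thmz4}. The step carrying the real content is the density of $\cG$: one must ensure that non-hyperellipticity of $C_j$ --- equivalently, non-degeneracy of the three pairs of points on $\PP^1$ --- is a generic open condition, so that the abstractly defined $\varPhi$ cannot escape the closure of the honest Prym locus. This is precisely where Theorem \ref{iso-thm}, together with the constraint from Lemma \ref{lem:isoisogeny} that each $x_i$ be a non-zero $2$-torsion point, is indispensable.
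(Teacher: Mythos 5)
Your first two paragraphs are correct and in fact sharper than the paper's own proof, which is a one-sentence argument (irreducibility of $\cZ_2$, dimension $3$, and containment of the image of ${\Pr}^{iso}$ via the commutativity of Diagram \ref{diag:iso-case}). Your explicit verification that $\widetilde\varPhi$ surjects onto $\cZ_2$ --- reading off $z_2=z_{22}$, $z_3=z_{33}$, $z_1=2z_{11}-\tfrac{z_2+z_3}{2}$, with positivity of $z_1'$ forced by $\det(Z')=\tfrac12 z_1'z_2'z_3'$ --- and the resulting identification $\pi_{D_2}(\cZ_2)=\varPhi(\cA_{1,\ZZ_2}^{(3)})$ are sound and make precise something the paper leaves implicit.

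The genuine gap is exactly in the step you yourself flag as carrying the real content: the density of $\cG$ in $\cA_{1,\ZZ_2}^{(3)}$. Theorem \ref{iso-thm} does not prove this. An abstract triple $((E_1,x_1),(E_2,x_2),(E_3,x_3))$ carries no points of $\PP^1$, so its failure to lie in $\cG$ is not a matter of ``branch points colliding'' or ``$C_j$ degenerating''; the issue is whether the triple lies at all in the image of the classifying map $\psi:\cR_2^{iso}\ra\cA_{1,\ZZ_2}^{(3)}$, $[\tC\ra H]\mapsto((E_{j\s},x_1),(E_{j\t},x_2),(E_{j\s\t},x_3))$ coming from Theorem \ref{thm:jtc} and Lemma \ref{lem:isoisogeny}. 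This is a map between two $3$-dimensional spaces whose dominance is precisely what must be shown; unlike the non-isotropic case, the paper constructs no inverse map $\Psi$ as in Proposition \ref{prop:PhiPsi} here, and indeed warns just before the theorem that not every triple in $\cA_{1,\ZZ_2}$ arises from the construction. A priori $\psi$ could have positive-dimensional fibres and $2$-dimensional image, in which case $\cG$ would not be dense and your final paragraph would collapse. The missing ingredient is Theorem \ref{thm:isoprym}, which you never invoke: since ${\Pr}^{iso}=\varPhi\circ\psi$ is injective, $\psi$ is injective, so $\cG=\psi(\cR_2^{iso})$ is $3$-dimensional and therefore dense in the irreducible $3$-dimensional variety $\cA_{1,\ZZ_2}^{(3)}$. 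With that one line your argument closes, and it then agrees in substance with the paper's: the $3$-dimensional image of the injective map ${\Pr}^{iso}$ sits, by commutativity of Diagram \ref{diag:iso-case}, inside the irreducible, at most $3$-dimensional set $\pi_{D_2}(\cZ_2)$, hence is dense in it.
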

\begin{proof}
The locus $\cZ_2$ is irreducible of dimension 3 and obviously contains the image of the Prym map by the comutativity of Diagram \ref{diag:iso-case}.
\end{proof}

\begin{rem}
One could have defined the closure of the image of the Prym map in a different and more adapted subspace of $\cA_3^{D}$.
Our choice has been to define $\cB^{ni}$ and $\cB^{iso}$ but Remark \ref{rmk:reducible} shows that they are reducible.
Another way could be to fix the restricted polarisation types to the elliptic curves in the following way
\[\cE^{D}_{b}=\{(A, \Xi)\in\cA_3^{D}: A=E_1\boxplus E_2\boxplus E_3, \Xi|_{E_i} \text{ is of type } (b),\text{ for } i=1,2,3\},\]
where $D=(d_1, d_2, d_3)$ is the polarization type and $d_3 | b$.
Unfortunately, this locus is reducible even in the `easiest' case $\cE_{2}^{D_2}$. To see this, 
consider a $(1,2)$-polarised abelian surface $S$ that contains elliptic curves $E_1, E_2$ with restricted polarisation of type $(2)$. Then $S\times E_3$ with a $(1,2,2)$ product polarisation  
is an element of $\cE_{2}^{D_2}$ and there is a 3-dimensional family of such products. They are clearly different from Pryms of isotropic Klein coverings.

The discrete invariant that distinguishes products from Pryms are the projections of  $\ker(f_1+f_2+f_3)
\subset E_1 \times E_2 \times E_3$  to each elliptic curve $E_i$. In the Prym case, all the projections $p_i(\ker(f_1+f_2+f_3)\subset E_i$ consists of two points, whereas in the product case, $p_3(\ker(f_1+f_2+f_3))=\{0\}$.
The necessity of this additional invariant also follows from \cite[Section 6]{Guerra}.
\end{rem}

\end{document}